\documentclass[12pt]{article}
\usepackage{graphicx}
\usepackage{amsmath}
\usepackage{amssymb}
\usepackage{theorem}

\sloppy
\pagestyle{plain}

\numberwithin{equation}{section}

\textheight=8.5in
\textwidth=6.0in
\addtolength{\oddsidemargin}{-.25in}

\newtheorem{thm}{Theorem}[section]
\newtheorem{lemma}[thm]{Lemma}

\newtheorem{prop}[thm]{Proposition}
\newtheorem{cor}[thm]{Corollary}
{\theorembodyfont{\rmfamily}
\newtheorem{defn}[thm]{Definition}
\newtheorem{example}[thm]{Example}

\newtheorem{rmk}[thm]{Remark}
}

\newcommand{\qed}{\hfill \mbox{\raggedright \rule{.07in}{.1in}}}
 
\newenvironment{proof}{\vspace{1ex}\noindent{\bf
Proof}\hspace{0.5em}}{\hfill\qed\vspace{1ex}}
\newenvironment{pfof}[1]{\vspace{1ex}\noindent{\bf Proof of
#1}\hspace{0.5em}}{\hfill\qed\vspace{1ex}}

\newcommand{\R}{{\mathbb R}}

\newcommand{\C}{{\mathbb C}}
\newcommand{\Z}{{\mathbb Z}}
\newcommand{\D}{{\mathbb D}}

\newcommand{\spec}{\operatorname{spec}}
\newcommand{\BV}{\operatorname{BV}}

\newcommand{\SMALL}{\textstyle}

\title{Decay of correlations for nonuniformly expanding systems with
general return times}

\author{Ian Melbourne \thanks{Department of Mathematics, University of Surrey,
Guildford, Surrey GU2 7XH, UK}
 \and
 Dalia Terhesiu \thanks{
Department of Mathematics, University of Surrey,
Guildford, Surrey GU2 7XH, UK}
}

\date{20 May, 2011.   Updated 24 August, 2011}

\begin{document}

\maketitle

 \begin{abstract}
We give a unified treatment of decay of correlations for nonuniformly expanding systems with a good inducing scheme.
In addition to being more elementary than previous treatments, our
results hold for general integrable return time functions under fairly mild conditions
on the inducing scheme.
 \end{abstract}

  \section{Introduction} 
  \label{sec-intro}

Let $T:X\to X$ be a (noninvertible) measure preserving transformation
with ergodic invariant probability measure $\mu_X$.    
Given $v\in L^1(X)$, $w\in L^\infty(X)$, we define the correlation
function $\rho_{v,w}(n)=\int_X v\,w\circ T^n\,d\mu_X-\int_X v\,d\mu_X
\int_X w\,d\mu_X$.
If $T$ is mixing, then $\rho_{v,w}(n)\to0$ as $n\to\infty$.

\begin{defn}
Let $\mathcal{B}(X)\subset L^1(X)$ denote a collection of observables $v:X\to\R$.
Let $a_n>0$ be a real sequence with $a_n\to0$.
We say that $T$ has {\em uniform decay rate $a_n$} for observables in $\mathcal{B}(X)$
if for every $v\in\mathcal{B}(X)$ there is a constant $C_v>0$ such that
$|\rho_{v,w}(n)|\le C_v |w|_\infty a_n$ for all $w\in L^\infty(X)$.
\end{defn}

We assume the existence of an induced map $F:Y\to Y$, $Y\subset X$,
given by $F(y)=f^{\varphi(y)}(y)$ for some return time
 $\varphi:Y\to\Z^+$.   
(We do not require that $\varphi$ is the first return time to $Y$.)
It is assumed throughout that $\mu$ is an $F$-invariant ergodic
probability measure on $\mu$ and that $\varphi\in L^1(Y)$.   
The measure $\mu_X$ on $X$ is constructed from $\mu$ and $\varphi$
in the standard way (see Section~\ref{sec-tower}).
The idea is to recover decay properties for $T$ from properties
of $F$ and the return tails $\mu(y\in Y:\varphi(y)>n)$.

In this paper, we combine the  method of
operator renewal sequences~\cite{Gouezel04a,GouezelPhD, Sarig02} with dynamical truncation~\cite{M09} to give a particularly
elementary and general treatment of decay of correlations in a much wider
context than the usual Young tower setting~\cite{Young99}.
Moreover, our results are strictly sharper than those obtained in 
the setting of Young towers by the methods of coupling~\cite{Young99},
Birkhoff cones~\cite{Maume01a} and stochastic perturbation~\cite{LiveraniSaussolVaienti99}.

\subsection{Young towers}

Young~\cite{Young98,Young99} considered the case where $T:X\to X$ is an
ergodic nonuniformly expanding local diffeomorphism on a manifold $X$ modelled
by a {\em Young tower}.   In particular, $F:Y\to Y$ is a uniformly expanding
map with good distortion properties with respect to a countable partition
(a so-called Gibbs-Markov map)
and $\varphi$ is constant on partition elements.
Throughout, this subsection, we take 
$\mathcal{B}(X)$ to be the space of (piecewise) H\"older observables.

In the case where $\mu(\varphi>n)$ decays exponentially,
Young~\cite{Young99} obtained exponential decay of correlations.
In the subexponential case, Young~\cite{Young99} proved (amongst other things)
that if $\mu(\varphi>n)=O(1/n^{\beta+1})$, $\beta>0$, then correlations
decay at the rate $a_n=1/n^\beta$.
This result was shown to
be optimal by Sarig~\cite{Sarig02} and Gou\"ezel~\cite{Gouezel04a}.
Gou\"ezel~\cite{GouezelPhD} introduced a very general class of 
{\em convolutive sequences} and proved that if $\mu(\varphi>n)=O(b_n)$
where $b_n$ is convolutive, then decay of correlations holds with
optimal rate $a_n=\sum_{j>n}b_j$.
This includes the cases of stretched exponential decay of correlations 
(Example~\ref{ex-stretch}) and 
{\em polynomially decreasing} sequences (Example~\ref{ex-poly}).

Even in the context of Young towers, we obtain a number of new results.
We mention three of these now.  (The general formulation Theorem~\ref{thm-main} of our result
is somewhat technical and hence delayed until Section~\ref{sec-L}.)

\begin{thm}  \label{thm-poly}
Suppose that $\varphi\in L^{1+\epsilon}(Y)$ for some $\epsilon>0$.
Then for any $p>0$ there exists $\delta>0$, $C>0$ such that
\[
\rho_{v,w}(n)\le C\|v\| |w|_\infty\Bigl\{\sum_{j>\delta n}\mu(\varphi>j)+n\mu(\varphi>\delta n)+O(n^{-p})\Bigr\},
\]
for all $v\in\mathcal{B}(X)$, $w\in L^\infty(X)$, $n\ge1$.
\end{thm}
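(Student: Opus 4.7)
The plan is to combine the operator renewal framework of Sarig and Gou\"ezel~\cite{Sarig02, Gouezel04a, GouezelPhD} with the dynamical truncation technique of~\cite{M09}. First, lift the correlation $\rho_{v,w}(n)$ from $X$ to the base $Y$ using the Young tower representation. Let $L_F$ denote the transfer operator of $F:Y\to Y$ (with spectral gap on the H\"older space $\mathcal{B}(Y)$) and define return operators $R_n:\mathcal{B}(Y)\to\mathcal{B}(Y)$ by $R_n u = L_F(1_{\{\varphi=n\}}u)$, together with the generating function $\hat{R}(z)=\sum_{n\ge1} R_n z^n$. Since $\{\varphi=n\}$ partitions $Y$, we have $\hat{R}(1)=L_F$, so $I-\hat{R}(z)$ has a simple resonance at $z=1$. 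The renewal sequence $T_n$ defined via $\sum_n T_n z^n=(I-\hat{R}(z))^{-1}$ encodes the iterates of $T$ restricted to successive returns to $Y$. Splitting $v$ and $w$ according to the tower level reduces the computation of $\rho_{v,w}(n)$ to estimating $\|T_n - P\|$ on $\mathcal{B}(Y)$, where $P$ projects onto the $L_F$-invariant subspace.

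Second, introduce the truncation. Fix $N=\delta n$ with $\delta>0$ to be chosen, and set $R_n^{(N)}=R_n$ for $n\le N$ and $R_n^{(N)}=0$ otherwise; let $T_n^{(N)}$ be the associated truncated renewal sequence. Compare $T_n$ with $T_n^{(N)}$ path by path: the contribution from any single occurrence of a return of length $>N$ is controlled by the tower mass $\sum_{j>N}\mu(\varphi>j)$, while summing over the at most $n$ positions of such a long return among the first $n$ iterates costs an additional factor $n$, producing $n\mu(\varphi>N)$. Likewise, the portion of $w\circ T^n$ supported above height $N$ contributes another $\sum_{j>N}\mu(\varphi>j)$. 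With $N=\delta n$, these terms account exactly for the first two pieces of the claimed bound.

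Third, analyze the truncated system. Since $\hat{R}^{(N)}(z)=\sum_{n\le N} R_n z^n$ is a polynomial in $z$ and $F$ has a spectral gap, $\hat{R}^{(N)}(z)$ inherits a uniform spectral gap on an annulus containing the unit circle away from $z=1$. The hypothesis $\varphi\in L^{1+\epsilon}(Y)$ gives just enough H\"older regularity of $\hat{R}(z)$ at $z=1$ to apply a Gou\"ezel-type renewal inversion to the truncated operator, yielding $\|T_n^{(N)}-P\|=O_N(n^{-p})$ for any $p>0$, with constants that grow only polynomially in $N$. Since $N=\delta n$, this polynomial dependence is absorbed into $n^{-p}$ after replacing $p$ by a larger value.

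The main obstacle is the uniformity in $N$ of the spectral estimates for $\hat{R}^{(N)}(z)$: one must extract from the bare $L^{1+\epsilon}$ hypothesis enough analytic control to keep $(I-\hat{R}^{(N)}(z))^{-1}$ bounded and to force the constants in the renewal inversion to depend polynomially (rather than exponentially) on $N$. Dynamical truncation is what makes this tractable, because once returns are capped at $N$ the operator-valued series $\hat{R}^{(N)}(z)$ is a polynomial and the renewal machinery applies cleanly, with the $L^{1+\epsilon}$ condition quantifying how the polynomial constants degrade. Matching $N$ to $n$ then balances the truncation error against the arbitrary polynomial decay from the spectral side.
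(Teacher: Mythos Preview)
Your strategy is the paper's strategy: renewal operators plus dynamical truncation at level $N=\delta n$, with the truncation error supplying $\sum_{j>\delta n}\mu(\varphi>j)+n\mu(\varphi>\delta n)$ and the truncated system supplying the $O(n^{-p})$ remainder. Two points of execution differ from the paper and are worth tightening.

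First, the paper truncates the \emph{dynamics}, not the operators: it sets $\varphi'=\min\{\varphi,k\}$ and builds a truncated tower $f':\Delta'\to\Delta'$ with the \emph{same} first return map $F$. Consequently $R'(1)=R(1)$ exactly, so the spectral picture at $z=1$ (and the projection $P$) is unchanged for every $k$. Your choice $R_n^{(N)}=0$ for $n>N$ gives $\hat R^{(N)}(1)\neq L_F$, forcing you to track a perturbed projection $P^{(N)}$; harmless, but an extra layer. More importantly, the paper's truncation lets one compare $\rho(n)$ with $\rho'(n)$ \emph{directly at the level of correlations} (Appendix~A), using only that the trajectory measures of $f$ and $f'$ differ on a set of size $\ll\sum_{j>k}\mu(\varphi>j)+n\mu(\varphi>k)$. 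Your ``path by path'' comparison of $T_n$ with $T_n^{(N)}$ would need uniform bounds on $\|T_m\|$ to sum the pieces, which is close to what you are trying to prove.

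Second, for the truncated system the paper does not invoke a Gou\"ezel-type inversion. Since $R'(z)$ is a polynomial, $T'(z)=(I-R'(z))^{-1}$ is meromorphic with a simple pole at $z=1$, and one simply chooses a radius $a(k)=\tfrac12 k^{-1}\log k$, checks (using $\varphi\in L^{1+\epsilon}$, i.e.\ $S_r(k,0)<\infty$ for $r<\epsilon$) that the relevant operator families stay bounded on $\D_{a}$, and reads off $\|L'^n-P_{\Delta'}\|\ll k^3 e^{-na(k)}$ by Cauchy's estimate. Taking $k=\delta n$ with $\delta=1/(2p+6)$ turns $k^3 e^{-na(k)}$ into $O(n^{-p})$. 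This is precisely your ``polynomial constants in $N$ absorbed into $n^{-p}$'', but obtained by an elementary contour argument rather than renewal inversion.
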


An immediate consequence of Theorem~\ref{thm-poly} is optimal upper bounds on decay of correlations when
$\mu(\varphi>n)=O(1/n^{\beta+1})$ for $\beta>0$.    More generally, the case 
when $\mu(\varphi>n)$ is dominated by
a regularly varying sequence $\ell(n)/n^{\beta+1}$ also follows from
Theorem~\ref{thm-poly}, as does the even more general situation
where $\mu(\varphi>n)$ is dominated by a polynomially decreasing sequence.
These results are stated in Section~\ref{sec-ex} along with treatments of
exponential decay, stretched exponential decay, and
regularly varying sequences with $\beta=0$.

Next, we mention two theoretical results.
It has been noted elsewhere that either (i) $\varphi\in L^2(Y)$, equivalently
$\sum_{n=1}^\infty n\mu(\varphi>n)<\infty$,
or (ii) summable decay of correlations $\sum_{n=1}^\infty \rho(n)<\infty$,  are sufficient to guarantee the validity of the
central limit theorem.
The special case $q=1$ of Corollary~\ref{cor-poly} below states 
that $\varphi\in L^2(Y)$ implies summable decay of correlations.

\begin{cor} \label{cor-poly}
Let $q>0$.
If $\varphi\in L^{q+1}(Y)$, then
$\sum_{n=1}^\infty n^{q-1}\rho_{v,w}(n)<\infty$ for all
$v\in \mathcal{B}(X)$, $w\in L^\infty(X)$.
\end{cor}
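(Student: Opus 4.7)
The plan is to invoke Theorem~\ref{thm-poly} with a suitably large $p$ and then bound the resulting sum over $n$ of each of the three terms on the right-hand side, using the standard tail-moment equivalence
\[
\varphi\in L^{q+1}(Y)\quad\Longleftrightarrow\quad \sum_{n=1}^\infty n^q\,\mu(\varphi>n)<\infty.
\]
This equivalence, proved by a routine integration-by-parts/summation-by-parts argument for nonnegative integer-valued random variables, is the only preliminary input; the rest is bookkeeping.

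First, apply Theorem~\ref{thm-poly} with some $p>q$, yielding a $\delta>0$ and $C>0$ depending on $p$. For the $O(n^{-p})$ term, $\sum_{n\ge1} n^{q-1}n^{-p}<\infty$ by the choice $p>q$. For the second term, the geometric rescaling $n\mapsto \lfloor n/\delta\rfloor$ gives
\[
\sum_{n=1}^\infty n^{q-1}\cdot n\,\mu(\varphi>\delta n)=\sum_{n=1}^\infty n^{q}\mu(\varphi>\delta n)\le C_\delta\sum_{m=1}^\infty m^q\mu(\varphi>m)<\infty
\]
by the hypothesis $\varphi\in L^{q+1}(Y)$.

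The remaining first term is handled by swapping the order of summation:
\[
\sum_{n=1}^\infty n^{q-1}\sum_{j>\delta n}\mu(\varphi>j)=\sum_{j=1}^\infty \mu(\varphi>j)\sum_{1\le n<j/\delta}n^{q-1}.
\]
For any $q>0$ one has $\sum_{n=1}^{N}n^{q-1}\le C_qN^q$ (either by direct estimate when $q\ge1$ or by comparison with $\int_0^N x^{q-1}\,dx$ when $0<q<1$), so the inner sum is bounded by a constant times $j^q$, and the whole expression is bounded by a constant times $\sum_j j^q\mu(\varphi>j)<\infty$.

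Summing the three finite contributions and multiplying by $C\|v\||w|_\infty$ yields the claim. There is no serious obstacle here: the only decisions are (i) making sure $p$ is chosen larger than $q$ in the appeal to Theorem~\ref{thm-poly}, and (ii) checking the estimate $\sum_{n\le N}n^{q-1}=O(N^q)$ uniformly for $q>0$, which covers both the $q\ge 1$ and $0<q<1$ regimes in one stroke.
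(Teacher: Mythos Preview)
Your proposal is correct and follows essentially the same approach as the paper: apply Theorem~\ref{thm-poly}, then bound each of the three terms separately using the tail-moment characterisation of $\varphi\in L^{q+1}(Y)$, swapping the order of summation for the double sum. The only cosmetic differences are that the paper takes $p=q+1$ rather than an arbitrary $p>q$, and absorbs the $\delta$ into a $\ll$ before swapping rather than after.
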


\begin{proof}
By Theorem~\ref{thm-poly}, we can choose $\delta>0$ so that
\[
\rho_{v,w}(n)\ll \sum_{j>\delta n}\mu(\varphi>j)+n\mu(\varphi>\delta n)+n^{-(q+1)}.
\]
(Throughout, we use `big $O$' and $\ll$ notation interchangeably,
writing $a_N=O(b_N)$ or $a_N\ll b_N$ (as $N\to\infty$) if there is
a constant $C>0$ such that $a_N\le Cb_N$ for all $N\ge1$.)

Multiplying by $n^{q-1}$, the last term is summable and the middle term yields
$\sum_{n=1}^\infty n^q\mu(\varphi>\delta n)\ll 
\sum_{n=1}^\infty n^q\mu(\varphi>n)<\infty$.    Finally,
\begin{align*}
\sum_{n=1}^\infty n^{q-1}\sum_{j>\delta n}\mu(\varphi>j) & \ll
\sum_{n=1}^\infty n^{q-1}\sum_{j>n}\mu(\varphi>j)
=\sum_{j=2}^\infty \sum_{n<j}n^{q-1}\mu(\varphi>j)  \\ &
\le \sum_{j=2}^\infty j^q\mu(\varphi>j)<\infty,
\end{align*}
so that $\sum_{n=1}^\infty n^{q-1}\rho_{v,w}(n)<\infty$.
\end{proof}

Our main results, including
Theorem~\ref{thm-poly}, give conditions for uniform rates of decay.  
A natural question is to inquire when uniform decay rates exist in the first place.  The following result addresses this issue.

\begin{thm}
\label{thm-slow}
Suppose that $\mu(\varphi>n)=O((n\log n)^{-1})$.
(We continue to assume in addition that $\varphi\in L^1(Y)$.)
Then correlations decay at a uniform rate for $v\in\mathcal{B}(X)$, $w\in 
L^\infty(X)$.
\end{thm}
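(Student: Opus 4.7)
The plan is to apply the main general Theorem~\ref{thm-main} with a dynamically truncated return time whose truncation level $N=N(n)$ grows with $n$. The hypothesis $\mu(\varphi>n)=O((n\log n)^{-1})$ does not imply $\varphi\in L^{1+\epsilon}(Y)$ for any $\epsilon>0$ (indeed $\sum n^\epsilon\mu(\varphi>n)\asymp\sum n^{\epsilon-1}/\log n$ diverges), so Theorem~\ref{thm-poly} cannot be invoked with a fixed proportional truncation $N=\delta n$. Instead we exploit the flexibility of Theorem~\ref{thm-main} together with the classical exponential-decay theory for Young towers of bounded height.

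Invoking Theorem~\ref{thm-main} with truncation at level $N$ should yield a bound of the shape
\[
|\rho_{v,w}(n)|\le C\|v\||w|_\infty\bigl(r_N(n)+n\mu(\varphi>N)+\textstyle\sum_{j>N}\mu(\varphi>j)\bigr),
\]
valid for every $N\ge 1$, where $r_N(n)$ denotes the decay of correlations for the inducing scheme with truncated return time $\varphi_N=\min(\varphi,N)$. Since $\varphi_N$ is bounded, the truncated scheme is a Young tower of bounded height, so $r_N(n)\to 0$ exponentially in $n$ for each fixed $N$. Under our hypotheses, $\sum_{j>N}\mu(\varphi>j)\to 0$ as $N\to\infty$ since $\varphi\in L^1$, while $n\mu(\varphi>N)=O(n/(N\log N))$ vanishes as soon as $N\log N/n\to\infty$.

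Because the theorem only asserts the qualitative existence of some $a_n\to0$, a standard diagonalization suffices to coordinate these three estimates. Inductively choose $N_k\uparrow\infty$ and thresholds $n_k\uparrow\infty$ so that for each $k$: (i) $\sum_{j>N_k}\mu(\varphi>j)\le 2^{-k}$; (ii) $N_k\log N_k\ge C\cdot 2^k n_{k+1}$ where $C$ is the constant in the tail bound; and (iii) $r_{N_k}(n)\le 2^{-k}$ for all $n\ge n_k$, which is possible by the exponential decay of the truncated tower at level $N_k$. Set $N(n)=N_k$ for $n\in[n_k,n_{k+1})$ and let $a_n$ equal the right-hand parenthesis above with $N=N(n)$. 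Then $a_n\le 3\cdot 2^{-k}$ on $[n_k,n_{k+1})$, so $a_n\to 0$, and $|\rho_{v,w}(n)|\le C\|v\||w|_\infty a_n$ for all $v\in\mathcal{B}(X)$, $w\in L^\infty(X)$, delivering the uniform decay rate with $C_v=C\|v\|$.

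The main obstacle is the coordination in this diagonalization: the exponential-decay constants $C_N,\gamma_N$ controlling $r_N(n)$ for the truncated Young tower may degrade as $N$ grows, whereas the truncation error insists that $N\log N$ grow at least linearly in $n$. The hypothesis $\mu(\varphi>n)=O((n\log n)^{-1})$ is calibrated precisely to admit this coordination; under any slower return-time tail the middle term $n\mu(\varphi>N)$ could not be made arbitrarily small in the range of $N$ for which the exponential decay of the truncated tower remains effective, obstructing the existence of any uniform rate.
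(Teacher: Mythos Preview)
Your strategy---truncate at level $N=N(n)$ and balance the three resulting terms---is the paper's. But the bound you display is really just the truncation estimate~\eqref{eq-trunc} with an abstract $r_N(n)$ standing in for the truncated correlation; you are not using the quantitative content of Theorem~\ref{thm-main}(i). The diagonalization then has the gap you yourself name as ``the main obstacle'': condition~(ii) forces $n_{k+1}\le N_k\log N_k/(C\,2^k)$, while condition~(iii) forces $n_k$ to exceed a threshold depending on the unknown rate $\gamma_{N_k}$. For these to be compatible (with $n_k<n_{k+1}$) you need, roughly, $\gamma_{N_k}\gtrsim 2^k/(N_k\log N_k)$, and nothing you have written rules out, say, $\gamma_N\sim e^{-N}$. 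Your final paragraph asserts that the tail hypothesis is ``calibrated precisely to admit this coordination,'' but that assertion \emph{is} the theorem and cannot be dispatched by an abstract diagonalization.

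The paper resolves this by using the explicit bound from Theorem~\ref{thm-main}(i): the truncated correlation is at most $S_q(k,a)e^{-na}$ for any $a=a(k)$ with $a^rS_r(k,a)\to0$. Under the hypothesis $\mu(\varphi>n)=O((n\log n)^{-1})$ (so $\|R_n\|\ll(n\log n)^{-1}$ by~(H1)), one takes $q=r=1$ and $a(k)=\tfrac12 k^{-1}\log\log k$, computes $S_1(k,a)\ll k(\log k)^{-1/2}$, and checks $aS_1(k,a)\to0$. With the explicit relation $n=2k\log k/\log\log k$, all three terms vanish: $S_1(k,a)e^{-na}\to0$, $n\mu(\varphi>k)\ll(\log\log k)^{-1}\to0$, and $\sum_{j>k}\mu(\varphi>j)\to0$ since $\varphi\in L^1$. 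This explicit balance---in particular the fact that the admissible exponential rate $a(k)$ is of order $(\log\log k)/k$ rather than anything smaller---is exactly the quantitative control on $C_N,\gamma_N$ that your argument lacks.
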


\subsection{Systems with excellent inducing schemes}

Let $T:X\to X$ be a transformation with induced map
$F=f^\varphi:Y\to Y$ and $F$-invariant ergodic probability measure $\mu$.
Let $R:L^1(Y)\to L^1(Y)$ be the transfer operator for $F$,
so $\int_Y Rv\,w\,d\mu=
\int_Y v\,w\circ F\,d\mu$ for all $v\in L^1(Y)$, $w\in L^\infty(Y)$.
Define $R_n=R1_{\{\varphi=n\}}$, $n\ge1$.

Let $\mathcal{B}(Y)\subset L^1(Y)$ be a Banach space 
with norm $\|\,\|$ satisfying $|v|_1\le \|v\|$ for all
$v\in\mathcal{B}(Y)$, and such that   
constant functions lie in $\mathcal{B}(Y)$.

\begin{itemize} 
\item[(H1)]  The operator
$R_n:\mathcal{B}(Y)\to\mathcal{B}(Y)$ is bounded 
for all $n$, and $\|R_n\|\ll \mu(\varphi>n)$.
\end{itemize}

Let $\D$ and $\bar\D$ denote the open and closed unit disk in $\C$.
Define $R(z)=\sum_{n=1}^\infty R_nz^n$ for $z\in\bar\D$.  Hypothesis (H1) 
guarantees that $z\mapsto R(z)$ is a continuous family of bounded
operators on $\mathcal{B}(Y)$ for $z\in\bar\D$, and the family is analytic
on $\D$.   Note that $R(1)=R$, so in particular
$1$ lies in the spectrum of $R(1)$.

\begin{itemize} 
\item[(H2)] 
\begin{itemize}
\item[(i)] The eigenvalue $1$ is simple and isolated
in the spectrum of $R(1)$.
\item[(ii)] For $z\in\bar\D\setminus\{1\}$, the spectrum of $R(z)$ does
not contain $1$.
\end{itemize}
\end{itemize}

\begin{defn} \label{def-excellent}
An inducing scheme $F=T^\varphi:Y\to Y$ is {\em excellent} if 
hypotheses (H1) and (H2) are satisfied for an appropriate Banach
space $\mathcal{B}(Y)$.
\end{defn}

Let $v:X\to\R$ be an observable.  We say that $\hat v:Y\to\R$ is {\em derived} from $v$
if for every $n\ge1$, there exists $j\in\{1,\dots,n-1\}$ such that
$\hat v(y)=v(T^jy)$ for all $y\in Y$ with $\varphi(y)=n$.
Let $\mathcal{D}_v$ denote the set of observables $\hat v:Y\to\R$ derived from $v$.

\begin{defn}  An observable
$v:X\to\R$ is {\em exchangeable} if $\mathcal{D}_v$ is a
bounded subset of $\mathcal{B}(Y)$.   Set $\|v\|=\sup_{\hat v\in\mathcal{D}_v}\|\hat v\|$.
\end{defn}

The definition of exchangeability formalises the need for control of iterates
$T^jy$ for $j\in\{1,\dots,\varphi(y)-1\}$.

For excellent inducing schemes and exchangeable observables, we obtain almost
identical 
results as those for H\"older observables on systems modelled by
Young towers.  In particular, Theorems~\ref{thm-poly} and~\ref{thm-slow}
and Corollary~\ref{cor-poly} hold in this generality.   
If we assume further that $\mathcal{B}(Y)$ is embedded in $L^\infty(Y)$ 
(rather than in $L^1(Y)$), then all of our conclusions in 
this paper are identical to those for Young towers.

\begin{example}[Young towers]  \label{ex-Young}
The inducing schemes for the nonuniformly expanding maps studied by Young~\cite{Young98,Young99} 
are Gibbs-Markov.  These are excellent inducing schemes since it
is well-known that hypotheses (H1) and (H2) are satisfied for the
Banach space $\mathcal{B}(Y)$ consisting of piecewise H\"older observables
on $Y$.
Moreover, piecewise H\"older observables on $X$ are exchangeable.

Although most of this paper is concerned with nonuniformly expanding maps, the results extend to systems that are nonuniformly hyperbolic in the sense
of Young~\cite{Young98,Young99}.  Details of this extension are given in 
Appendix~\ref{sec-NUH} based on ideas of~\cite{ChazottesGouezel,GouezelPC}.
\end{example}

\begin{example}[AFN maps]
Zweim\"uller~\cite{Zweimuller98}
studied a class of non-Markovian uniformly expanding
interval maps (so-called AFN maps) with finite absolutely continuous invariant
measures.
In
particular,~\cite{Zweimuller98} obtained a spectral decomposition into
 basic ergodic sets and proved that for each basic set there
is a unique absolutely continuous invariant probability measure.
Each basic set is mixing up to a finite cycle, and we suppose that $X$
is a mixing basic set.
There is a first return map $F:Y\to Y$
that is uniformly expanding
with respect to a partition consisting of intervals.
Moreover $F$ has good distortion properties.
It can be shown that $F$ is an excellent inducing scheme with
function space $\BV(Y)$ (observables of bounded variation).

Unfortunately, $\BV(X)$ is not exchangeable.   However, it turns out that
$F$ is also excellent if we enlarge $\mathcal{B}(Y)$ to consist of
piecewise bounded variable observables, and then the corresponding space
$\mathcal{B}(X)$ is exchangeable.   
The details are sketched in Section~\ref{sec-AFN}.
\end{example}
 
\begin{rmk}   Suppose that the inducing scheme is a first return map 
(that is, $\varphi(y)=\inf\{n\ge1:T^ny\in Y\}$).
If $v:X\to\R$
is supported on $Y$ and $1_Yv\in\mathcal{B}(Y)$, then $v$ is exchangeable.
Hence our results apply to such observables (and all $w\in L^\infty(X)$)
whenever the first return map is an excellent inducing scheme.
\end{rmk}

\subsection{Systems with good inducing schemes}

There are a number of situations where the induced map has good behaviour
but properties such as bounded distortion and/or large images fail.
Examples include the class of interval maps studied by
Ara\'ujo~{\em et al.}~\cite{AraujoLuzzattoViana09} (where the induced map is
of the type studied by Rychlik~\cite{Rychlik83}), and Hu-Vaienti maps~\cite{HuVaienti09} which are multidimensional
nonMarkovian nonuniformly expanding maps with indifferent fixed points.  

In such situations, it is likely that hypothesis (H1) can fail
quite badly.  However, it
turns out that we can obtain decay estimates (often optimal estimates)
under a weaker condition (hypothesis (*) below)
 that seems much more tractable.   Verification of hypothesis (*) in 
situations such as~\cite{AraujoLuzzattoViana09} and~\cite{HuVaienti09} will be 
addressed in future work.

Fix the Banach space $\mathcal{B}(Y)$ as before.
We replace hypothesis (H1) by:
\begin{itemize}
\item[(*)]
$\sum_{n=1}^\infty\sum_{j>n}\|R_j\|<\infty$.
\end{itemize}
Note that $\mu(\varphi=n)=|R_n1_Y|_1\le \|R_n\|\|1_Y\|$ so
for excellent inducing schemes  
condition~(*) is simply the requirement that $\varphi\in L^1(Y)$.
In general, condition~(*) is sufficient to ensure that
the family $R(z)=\sum_{n=1}^\infty R_nz^n$
is analytic on $\D$ and continuous on $\bar\D$ as before.
(The full strength of (*) is required in Proposition~\ref{prop-RR}.)

\begin{defn} \label{def-good}
An inducing scheme $F=T^\varphi:Y\to Y$ 
is {\em good} if hypotheses (*) and (H2) are 
satisfied for an appropriate Banach space $\mathcal{B}(Y)$.
\end{defn}

We have the following generalisations of Theorem~\ref{thm-poly}
and Corollary~\ref{cor-poly}.

\begin{thm} \label{thm-good}
Suppose that $F=T^\varphi:Y\to Y$ is a good inducing scheme and that $\mathcal{B}(X)$ is a collection of exchangeable observables.
Suppose further that $\sum_{n=1}^\infty n^\epsilon\sum_{j>n}\|R_j\|<\infty$ for some $\epsilon>0$.

Then for any $p>0$ there exists $\delta>0$, $C>0$ such that
\[
\rho_{v,w}(n)\le C\|v\| |w|_\infty\Bigl\{\sum_{j>\delta n}\mu(\varphi>j)+n\mu(\varphi>\delta n)+O(n^{-p})\Bigr\},
\]
for all $v\in\mathcal{B}(X)$, $w\in L^\infty(X)$, $n\ge1$.
\end{thm}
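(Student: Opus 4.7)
The strategy is to run the same operator-renewal / dynamical-truncation scheme that underlies Theorem~\ref{thm-poly}, substituting hypothesis~(*) and the moment bound $\sum_n n^\epsilon\sum_{j>n}\|R_j\|<\infty$ wherever the Young-tower ingredient $\|R_n\|\ll\mu(\varphi>n)$ was previously used. The lift to the tower $\tilde X=\{(y,\ell):0\le\ell<\varphi(y)\}$ over $F:Y\to Y$, the use of exchangeability to write the lifted observable as $\tilde v(y,\ell)=\hat v_\ell(y)$ with $\|\hat v_\ell\|\le\|v\|$, and the reduction of $\rho_{v,w}(n)$ to an operator-valued renewal sum on $Y$ are identical in both settings, so I would carry these over verbatim and invoke the general framework of Theorem~\ref{thm-main}.

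The first substantive step is to establish the existence and analytic structure of the renewal sequence $T_n:\mathcal{B}(Y)\to\mathcal{B}(Y)$ defined by $\sum_{n\ge 0}T_nz^n=(I-R(z))^{-1}$. Hypothesis~(*) is exactly what is needed to make $R(z)$ continuous on $\bar\D$ and analytic on $\D$, and (H2) then gives a single simple pole at $z=1$ with rank-one residue $\Pi$, so $(I-R(z))^{-1}$ is meromorphic on $\bar\D$ with invertible values elsewhere. This reproduces, with (*) in place of (H1), the setup used by Gou\"ezel and Sarig.

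The second step is the dynamical truncation of~\cite{M09}: split the renewal sum at height $\delta n$ in the tower. The high-level part contributes directly the two explicit terms $\sum_{j>\delta n}\mu(\varphi>j)$ and $n\mu(\varphi>\delta n)$ after using only the identity $|R_j1_Y|_1=\mu(\varphi=j)$, which does not require (H1). The low-level part is a finite sum of the form $\sum_{k<\delta n}(T_{n-k}-\Pi)(\hat v_k\,1_{\{\varphi>k\}})$ plus a projection piece that, after summation over $k$, reconstructs precisely $\int v\,d\mu_X\int w\,d\mu_X$ and is therefore absorbed by the definition of $\rho_{v,w}$. To bound what remains, one needs $\|T_n-\Pi\|$ to decay polynomially.

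The third step, which I expect to be the main technical obstacle, is the renewal asymptotic $\|T_n-\Pi\|=O(n^{-(1+\epsilon)})$ derived from the moment condition alone. Rewriting $\sum_n n^\epsilon\sum_{j>n}\|R_j\|\asymp\sum_n n^{1+\epsilon}\|R_n\|$ shows that $R(e^{i\theta})$ is regular of order $1+\epsilon$ at $\theta=0$; combining this with the uniform resolvent bound on $\bar\D\setminus\{1\}$ provided by (*) and (H2) and inverting the Fourier series in the usual Gou\"ezel--Sarig manner yields the desired rate. To upgrade the resulting polynomial remainder in the bulk to the advertised $O(n^{-p})$ for arbitrary $p$, I would iterate the truncation $\lceil p/\epsilon\rceil$ times while shrinking $\delta$ at each step, the same bootstrap already used in the proof of Theorem~\ref{thm-poly}. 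Apart from this renewal asymptotic, every step is essentially mechanical once one has verified that (*) suffices to replace (H1) in the analytic-family setup.
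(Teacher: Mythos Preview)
Your proposal diverges from the paper's argument in a fundamental way and, as written, has a genuine gap.

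The paper's proof of Theorem~\ref{thm-good} is a direct application of Theorem~\ref{thm-main}(ii): one chooses $a(k)=\tfrac12 k^{-1}\log k$, checks that $a^rS_r(k,a)\to0$ for $r<\epsilon$ (this is where the moment hypothesis $\sum_n n^\epsilon\sum_{j>n}\|R_j\|<\infty$ enters), and reads off the bound
\[
|\rho(n)|\ll \sum_{j>k}\mu(\varphi>j)+n\mu(\varphi>k)+k^3e^{-\tfrac12 n\log k/k}.
\]
Setting $k=\delta n$ with $\delta=1/(2p+6)$ makes the last term $\ll n^3(\delta n)^{-1/(2\delta)}=O(n^{-p})$. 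There is no iteration and no bootstrap anywhere. The crucial mechanism is that the \emph{truncated} tower $\Delta'$ has bounded height, so $R'(z)$ is a polynomial and $T'(z)=(I-R'(z))^{-1}$ extends analytically to a disk $\D_a$ of radius $e^{a(k)}>1$; this produces genuinely exponential decay $e^{-na(k)}$ on $\Delta'$, which after coupling $k$ to $n$ yields any desired polynomial rate in a single step.

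Your scheme instead works with the \emph{untruncated} renewal operators $T_n$ and proposes to prove $\|T_n-\Pi\|=O(n^{-(1+\epsilon)})$ via a Gou\"ezel--Sarig Fourier argument. Even granting that estimate, summing over levels $k<\delta n$ as you describe yields only a remainder of order $n\cdot n^{-(1+\epsilon)}=O(n^{-\epsilon})$, not $O(n^{-p})$. Your plan to ``iterate the truncation $\lceil p/\epsilon\rceil$ times'' to amplify the rate has no analogue in the paper (there is no such bootstrap in the proof of Theorem~\ref{thm-poly} either; that theorem is proved identically, via Theorem~\ref{thm-main}), and it is not clear what quantity would improve under iteration: the truncation error $\sum_{j>\delta n}\mu(\varphi>j)+n\mu(\varphi>\delta n)$ is fixed once $\delta$ is chosen, and the renewal estimate $\|T_n-\Pi\|$ is a property of the full system, unaffected by truncation. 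The whole point of the paper is that by passing to the finite-height tower and exploiting analyticity on $\D_a$, one bypasses the delicate renewal asymptotic entirely and obtains arbitrary polynomial remainders for free.
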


\begin{cor} \label{cor-good}
Suppose that $F=T^\varphi:Y\to Y$ is a good inducing scheme and that $\mathcal{B}(X)$ is a collection of exchangeable observables.
Suppose further that $\sum_{n=1}^\infty n^\epsilon\sum_{j>n}\|R_j\|<\infty$ for some $\epsilon>0$.
Let $q>0$.
If $\varphi\in L^{q+1}(Y)$, then
$\sum_{n=1}^\infty n^{q-1}\rho_{v,w}(n)<\infty$ for all
$v\in \mathcal{B}(X)$, $w\in L^\infty(X)$.   \qed
\end{cor}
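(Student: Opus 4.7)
The plan is to mimic the proof of Corollary~\ref{cor-poly} verbatim, with the only change being that Theorem~\ref{thm-good} is invoked in place of Theorem~\ref{thm-poly}. Since the hypotheses of Theorem~\ref{thm-good} (good inducing scheme, exchangeability of $\mathcal{B}(X)$, and the $n^\epsilon$-weighted summability of $\sum_{j>n}\|R_j\|$) are all assumed in Corollary~\ref{cor-good}, Theorem~\ref{thm-good} is immediately available, which is the only ingredient specific to the Young tower setting used in the original argument.

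Concretely, I would first apply Theorem~\ref{thm-good} with the choice $p=q+1$ to obtain some $\delta>0$ such that
\[
\rho_{v,w}(n)\ll \sum_{j>\delta n}\mu(\varphi>j)+n\mu(\varphi>\delta n)+n^{-(q+1)},
\]
uniformly in $v\in\mathcal{B}(X)$ and $w\in L^\infty(X)$ (with constant depending on $\|v\||w|_\infty$). I would then multiply through by $n^{q-1}$ and handle the three resulting sums as in Corollary~\ref{cor-poly}: the error term contributes $\sum_n n^{-2}<\infty$; the middle term gives $\sum_n n^q\mu(\varphi>\delta n)\ll\sum_n n^q\mu(\varphi>n)<\infty$ since $\varphi\in L^{q+1}(Y)$; and for the first term, swapping the order of summation yields
\[
\sum_{n=1}^\infty n^{q-1}\sum_{j>\delta n}\mu(\varphi>j)\ll \sum_{j=2}^\infty\Bigl(\sum_{n<j}n^{q-1}\Bigr)\mu(\varphi>j)\le \sum_{j=2}^\infty j^q\mu(\varphi>j)<\infty,
\]
again using $\varphi\in L^{q+1}(Y)$.

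There is no real obstacle here: the proof is entirely mechanical once Theorem~\ref{thm-good} is in hand, and the only mild point to be careful about is choosing $p$ strictly larger than $q$ so that the Cesàro-type weighting $n^{q-1}$ does not spoil the summability of the $O(n^{-p})$ remainder. In particular, no additional structure on $\mathcal{B}(Y)$ or on the transfer operator beyond what is already packaged in Theorem~\ref{thm-good} is required.
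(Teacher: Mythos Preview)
Your proposal is correct and is exactly the approach the paper intends: the corollary is marked with an immediate \qed, and the implicit argument is precisely to rerun the proof of Corollary~\ref{cor-poly} with Theorem~\ref{thm-good} replacing Theorem~\ref{thm-poly}. Your handling of the three terms, including the choice $p=q+1$ and the absorption of the factor $\delta$ into the implied constants, matches the paper's computation line for line.
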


The remainder of this paper is as follows.
In Section~\ref{sec-strategy}, we describe the strategy adopted in this paper.
In essence, everything that follows Section~\ref{sec-strategy} is an extended exercise.
The required estimates are carried out in 
Sections~\ref{sec-T} and~\ref{sec-L}.
In particular, Section~\ref{sec-L} contains the most general versions of our 
results.
In Section~\ref{sec-ex}, we verify that Theorem~\ref{thm-good} and
Theorem~\ref{thm-slow} follow from the general results and compute 
correlation decay
rates for specific tail functions $\mu(\varphi>n)$.

\begin{rmk}
The technique introduced in this paper can also be used to obtain
a simplified and generalised
treatment of lower bounds (and improved upper bounds) for
decay of correlations~\cite{Gouezel04a,GouezelPhD,Sarig02}.
The results on lower bounds are restricted to the setting 
of excellent first return maps and observables supported on $Y$.   
Since the setting is more restricted, and additional ideas are required, we 
defer these results to a later paper.
\end{rmk}

\section{Strategy}
\label{sec-strategy}

The strategy in this paper consists of three main steps:
\begin{itemize}
\item[1.]   Pass to a 
{\em tower extension} $f:\Delta\to\Delta$ of the
underlying map $T:X\to X$.  The tower $\Delta$ is
a discrete suspension over $F:Y\to Y$ with height $\varphi$.
In particular $F=T^\varphi=f^\varphi$.
Decay of correlations on $\Delta$ pushes down to decay of correlations
on $X$.   Hence this step reduces to the situation where $\varphi$ is
a {\em first} return time function.
\item[2.]   Use {\em dynamical truncation}~\cite{M09}
to replace the tower $\Delta$ by a tower $\Delta'$ with finite height 
$\varphi'$ in such a way that the first return map $F$ is unchanged.
The truncation error between correlation decay on $\Delta$ and on
$\Delta'$ is easily controlled.
\item[3.]   Use {\em operator renewal sequences}~\cite{Gouezel04a,GouezelPhD,Sarig02} to estimate
correlation decay on the truncated tower $\Delta'$ in terms of
the height $\varphi'$ and spectral properties of the transfer operator $R$
for the induced map $F:Y\to Y$.   A key observation from~\cite{M09}
is that the dependence of the estimates on $\varphi'$ are explicit,
while $F:Y\to Y$, $R$ and $\mu$ are unchanged throughout.
\end{itemize}

We now describe each of these steps in more detail.

\subsection{Tower extension}
\label{sec-tower}

Given the induced map $F:Y\to Y$ and return time $\varphi:Y\to\Z^+$,
we define the tower $\Delta=Y^\varphi=\{(y,\ell)\in Y\times \Z:0\le \ell \le \varphi(y)-1\}$ and the tower map $f:\Delta\to\Delta$ by
$f(y,\ell)=(y,\ell+1)$ for $\ell\le\varphi(y)-2$ and
$f(y,\varphi(y)-1)=(Fy,0)$.
Set $\bar\varphi=\int_Y\varphi\,d\mu$ and
define the $f$-invariant probability measure
$\mu_\Delta=(\mu\times{\rm counting})/\bar\varphi$ on $\Delta$.

Define the semiconjugacy $\pi:\Delta\to X$, $\pi(y,\ell)=T^\ell y$,
and set $\mu_X=\pi_*\mu_\Delta$.
Given observables $v,w:X\to\R$, we define the lifted observables
$v\circ\pi,w\circ\pi:\Delta\to\R$.  Then it suffices
to compute  correlation decay rates for the lifted observables on $\Delta$.
Moreover, it is immediate that if $v:X\to\R$ is exchangeable
(relative to the Banach space $\mathcal{B}(Y)$) then so is the lifted
observable $v\circ\pi:\Delta\to\R$.

From now on, given $v,w:\Delta\to\R$, we study decay rates for
\[
\rho_{v,w}(n)=\int_{\Delta} v\,w\circ f^n\,d\mu_{\Delta}-
\int_{\Delta} v\,d\mu_{\Delta} \int_{\Delta} w\,d\mu_{\Delta}.
\]

\subsection{Dynamical truncation}

Given $k\ge1$, we define the truncated return time function
$\varphi'=\min\{\varphi,k\}$.
Just as we defined $f:\Delta\to\Delta$ starting from $F:Y\to Y$ and $\varphi:Y\to\Z^+$,
we can define the truncated tower map $f':\Delta'\to\Delta'$
starting from $F:Y\to Y$ and $\varphi':Y\to\Z^+$.    Note that
$F=f^\varphi=(f')^{\varphi'}$ is independent of $k$.

Similarly, set $\bar\varphi'=\int_Y\varphi'\,d\mu$ and
define the $f'$-invariant probability measure
$\mu_{\Delta'}=(\mu\times{\rm counting})/\bar\varphi'$ on $\Delta'$.

Given $v\in L^\infty(\Delta)$, $w\in L^\infty(\Delta)$, we define
$v\in L^\infty(\Delta')$, $w\in L^\infty(\Delta')$ by restriction.
Let
\[
\rho'_{v,w}(n)=\int_{\Delta'} v\,w\circ (f')^n\,d\mu_{\Delta'}-
\int_{\Delta'} v\,d\mu_{\Delta'} \int_{\Delta'} w\,d\mu_{\Delta'}.
\]
We have the estimate~\cite{M09},
\begin{align} \label{eq-trunc}
|\rho_{v,w}(n)-\rho'_{v,w}(n)|\ll 
|v|_\infty |w|_\infty \Bigl(\sum_{j>k}\mu(\varphi>j)+n\mu(\varphi>k)\Bigr).
\end{align}
See the appendix for details.

\subsection{Operator renewal sequences}
It remains to estimate decay of correlations on the truncated tower.
Since $\varphi'$ is bounded, we expect to obtain an
exponential estimate of the form $|\rho'_{v,w}(n)|\le C_{v,w}(k)e^{-a(k)n}$.
Given sufficient control of $C(k)$ and $a(k)$, this estimate can be
combined with~\eqref{eq-trunc} (choosing $k=k(n)$)
to obtain an estimate for $\rho_{v,w}(n)$.
A surprising aspect of our approach is the degree of control on $C(k)$ and $a(k)$.

We recall the standard definitions of renewal theory, first for
the nontruncated map.
Let $L$ denote the transfer operator for $f:\Delta\to\Delta$
and let $R$ denote the transfer operator for $F=f^\varphi :Y\to Y$.
Define the renewal operators $T_n,R_n:\mathcal{B}(Y)\to\mathcal{B}(Y)$
\[
T_n=1_YL^n1_Y, \enspace n\ge0,\quad
R_n=1_YL^n1_{\{\varphi=n\}}=R1_{\{\varphi=n\}}, \enspace n\ge1.
\]
Define $T(z)=\sum_{n=0}^\infty T_nz^n$ and
$R(z)=\sum_{n=1}^\infty R_nz^n$.    
An elementary calculation shows that
$T_n=\sum_{j=1}^n T_{n-j}R_j$ and hence $T(z)=I+T(z)R(z)$ leading to the renewal equation
$T(z)=(I-R(z))^{-1}$.
Hypothesis (H1) or (*) guarantees that $R(z)$ is analytic on $\D$ and continuous on $\bar\D$.   Moreover, $T(z)$ is analytic on $\D$ and
It follows from (H2)(ii) that $T(z)$ extends continuously to $\bar\D\setminus\{1\}$.  
By (H2)(i), $T(z)$ has a singularity at $z=1$.   The idea of renewal sequences is to use knowledge about
the sequence $R_n$ and the singularity to understand the behaviour of $T(z)$ 
and thereby $T_n$ (and ultimately $L^n$).

The situation is simpler for the truncated dynamical system.
Passing to the truncated tower, we have the transfer operator $L'$ corresponding to 
$f':\Delta'\to\Delta'$.  By construction the first return map $F=(f')^{\varphi'}:Y\to Y$
is independent of $k$ with fixed transfer operator $R$.
Define the truncated renewal operators
\[
T'_n=1_YL'^n1_Y, \enspace n\ge0,\quad
R'_n=1_YL'^n1_{\{\varphi'=n\}}=R1_{\{\varphi'=n\}}, \enspace n\ge1.
\]
Again, $T'(z)=\sum_{n=0}^\infty T'_nz^n$ is analytic on $\D$.
Evidently, $R'_n=0$ for $n>k$, so $R'(z)=\sum_{n=1}^k R'_nz^n$ is a polynomial.
Again, we have the renewal equation $T'(z)=(I-R'(z))^{-1}$.
For the truncated tower, it follows from standard arguments that the singularity of $T'(z)$ at $z=1$ is a simple pole.

In Section~\ref{sec-T}, we investigate the behaviour of $T'(z)$ using the ideas described above.
In Section~\ref{sec-L}, we show how to pass from $T'(z)$ to 
$L'(z)=\sum_{n=0}^\infty L'^nz^n$.   From this we obtain exponential convergence
results for the coefficients $L'^n$ and hence the required
exponential decay
for $\rho'_{v,w}(n)$.

\section{Analyticity of $T'(z)$}
\label{sec-T}

In this section, we assume that we have a good inducing scheme $F:Y\to Y$
with transfer operator $R$ satisfying conditions (*) and (H2) for 
an appropriate Banach space $\mathcal{B}(Y)\subset L^1(Y)$.
Denote the spectral projection corresponding to the simple eigenvalue $1$ for $R(1)$ by
$Pv=\int_Y v\,d\mu$.

For $a\ge0$, let $\D_a=\{|z|\in\C:|z|<e^a\}$. 
Define
\begin{align*}
& S_q(k,a)=\sum_{j=1}^k (\sum_{\ell>j}\|R_\ell\|)j^qe^{ja},
\quad k\ge1,\enspace a,q\in[0,\infty).
\end{align*}
We prove the following result.

\begin{lemma} \label{lem-T}
Let $a=a(k)\in(0,\infty)$ be such that $\lim_{k\to\infty}a^rS_r(k,a)=0$ for some $r\in(0,1]$.
Then there exists $k_0\ge1$ such that for any $q\in(0,1]$, $k\ge k_0$, 
\[
T'(z)=(1-z)^{-1}(1/\bar\varphi')P+J'(z),
\]
where $J'(z)$ is analytic on 
the disk $\D_a$ and $\sup_{z\in \D_a}|z-1|^{1-q}\|J'(z)\|\ll S_q(k,a)$.
\end{lemma}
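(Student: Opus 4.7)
The plan is to invert $I - R'(z)$ on $\D_a$ by treating $R'(z)$ as an analytic perturbation of $R = R(1)$, whose spectrum by (H2) has $1$ as a simple isolated eigenvalue with projection $P$. Since $\varphi' \le k$, the operator $R'(z) = \sum_{n=1}^{k-1} R_n z^n + (\sum_{j\ge k} R_j)z^k$ is a polynomial with $R'(1) = R$, so
$$R'(z) - R = \sum_{n=1}^{k-1} R_n(z^n - 1) + \Bigl(\sum_{j\ge k} R_j\Bigr)(z^k - 1).$$
The first key estimate I would establish is $\|R'(z) - R\| \ll |z-1|^q S_q(k,a)$ on $\D_a$, valid for every $q \in (0,1]$. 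This combines the interpolation inequality $|z^n - 1| \ll n^q |z|^{n-q}|z-1|^q$ (derived from $z^n - 1 = (z-1)\sum_{j<n} z^j$ and the trivial bound $|z^n - 1| \le 2|z|^n$) with Abel summation keyed on $\|R_n\| = S_n - S_{n+1}$, where $S_n = \sum_{\ell\ge n}\|R_\ell\|$. Summation by parts transfers the weight $n^q e^{na}$ onto the tails $S_{n+1}$, yielding exactly the sum that defines $S_q(k,a)$.

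Taking $q = r$ and invoking the hypothesis $a^r S_r(k,a)\to 0$, the perturbation $\|R'(z) - R\|$ is uniformly small on $\D_a$ for $k \ge k_0$. By Kato analytic perturbation theory, for $k\ge k_0$ the operator $R'(z)$ then has a simple isolated eigenvalue $\lambda(z)$ near $1$ with spectral projection $P(z)$, both depending analytically on $z\in\D_a$ and satisfying $\lambda(1) = 1$, $P(1) = P$, while the rest of $\spec R'(z)$ stays uniformly bounded away from $1$. This gives the splitting
$$T'(z) = (1-\lambda(z))^{-1}P(z) + G(z),\quad G(z) = (I - R'(z))^{-1}(I - P(z)),$$
with $G(z)$ analytic and uniformly bounded on $\D_a$.

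To extract the pole, I would compute $\int_Y R'(z) 1_Y\,d\mu = \int_Y z^{\varphi'}\,d\mu$ and deduce via standard first-order eigenvalue perturbation that $\lambda'(1) = \bar\varphi'$. So $1-z$ and $1-\lambda(z)$ have the same simple zero at $z = 1$, and
$$(1-\lambda(z))^{-1} P(z) = (1-z)^{-1}(1/\bar\varphi')P + (1-z)^{-1}\Bigl[\frac{1-z}{1-\lambda(z)}P(z) - \frac{P}{\bar\varphi'}\Bigr].$$
The bracketed term is analytic on $\D_a$ and vanishes at $z = 1$; its norm is controlled via the first step applied to both $P(z) - P$ and $\lambda(z)-1-\bar\varphi'(z-1)$, yielding an $O(|z-1|^q S_q(k,a))$ bound. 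Setting $J'(z)$ equal to this analytic correction plus $G(z)$ then produces the expansion and the final estimate $|z-1|^{1-q}\|J'(z)\| \ll S_q(k,a)$.

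The main obstacle is the second step: (H2) furnishes spectral information only on $\bar\D$, whereas analyticity of $J'$ on the strictly larger disk $\D_a$ is needed. The factor $e^{na}$ in the perturbation bound is the price of this enlargement, and the hypothesis $a^r S_r(k,a)\to 0$ is calibrated precisely to dominate it and keep $\|R'(z) - R\|$ within the spectral gap of $R$, uniformly on $\D_a$. Once Kato's decomposition is anchored on the enlarged disk, the subsequent bookkeeping of $|z-1|^q$ factors needed to arrive at the stated estimate is routine.
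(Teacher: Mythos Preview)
Your approach has the right skeleton --- perturb from $R$ to $R'(z)$, extract the pole via first-order eigenvalue perturbation, collect the remainder as $J'(z)$ --- but the second step has a genuine gap.

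You claim that taking $q=r$ in the bound $\|R'(z)-R\|\ll |z-1|^r S_r(k,a)$ and invoking $a^r S_r(k,a)\to0$ makes the perturbation uniformly small on all of $\D_a$, so that Kato perturbation anchors the spectral decomposition of $R'(z)$ to that of $R$ throughout the disk. This is false: for $z$ bounded away from $1$ (say $z=-1\in\D_a$) one has $|z-1|\approx 2$, and the bound degenerates to $\|R'(z)-R\|\ll S_r(k,a)$, which need not be small at all (indeed $S_q(k,a)$ appears in the \emph{conclusion} of the lemma and typically grows with $k$). The hypothesis controls $a^r S_r(k,a)$, not $S_r(k,a)$ itself. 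So there is no reason, from perturbation at $z=1$ alone, that $1\notin\spec R'(z)$ for $z\in\D_a$ far from $1$, and your $G(z)=(I-R'(z))^{-1}(I-P(z))$ is not known to exist there. You never invoke (H2)(ii), and without it the argument cannot cover the whole disk.

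The paper fills exactly this hole by working in three layers (Proposition~\ref{prop-first}). First, (H2)(ii) combined with continuity of $z\mapsto R(z)$ on $\bar\D$ gives uniform bounds for $(I-R(z))^{-1}$ on $\bar\D\setminus B_\delta(1)$ and for the spectral data on $\bar\D\cap B_\delta(1)$. Second, the estimate $\|R(z)-R'(z)\|\le 2\sum_{j>k}\|R_j\|\to0$ (uniform in $z\in\bar\D$) transfers these bounds to $R'(z)$ on $\bar\D$. Third, the \emph{radial} perturbation $\|R'(e^{a+ib})-R'(e^{ib})\|\ll aS_0(k,a)\to0$ pushes everything out to $\D_a$. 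Only the third step costs a factor of $a$, which is why the hypothesis is $a^rS_r(k,a)\to0$. Your $|z-1|^q$ bound is essentially the paper's estimate for $\tilde R'(z)$ (Proposition~\ref{prop-RR}(c) and the end of the proof of Lemma~\ref{lem-T}); it is the right tool for bounding $J'(z)$ \emph{near} $z=1$, but it is the wrong tool for establishing invertibility of $I-R'(z)$ globally on $\D_a$.
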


In the remainder of this section, we prove Lemma~\ref{lem-T}.
As already mentioned, it follows from standard arguments that $T'(z)$ has a simple pole at $z=1$ 
and so $B'(z)=(1-z)T'(z)$ extends analytically to $\D_a$ for some $a>0$.
The proof of Lemma~\ref{lem-T}  consists of estimating $a=a(k)$ and controlling the norms of various analytic families of operators on $\D_a$.
This is a fairly routine exercise, but the calculations are quite complicated.   To remedy this, we first
sketch the formal calculation in Subsection~\ref{sec-formal} and then carry out the rigorous estimates
in Subsection~\ref{sec-rigour}.

\subsection{Formal calculation on $\bar\D$}
\label{sec-formal}

In this subsection, we regard $k$ as fixed and large, and we argue formally.
Note that $R'(1)=R(1)$ with simple isolated eigenvalue $1$.
Moreover $R'(z)$ is a polynomial, so there exists $\delta>0$ such that
the eigenvalue $1$ for $R'(1)$ extends to an analytic family of eigenvalues 
$\lambda'(z)$ on $B_\delta(1)$ with a corresponding family of
spectral projections $P'(z)$.   Let $Q'(z)=I-P'(z)$.
Then in an obvious notation, we can write
\[
T'(z)=(1-\lambda'(z))^{-1}P'(z)+(I-R'(z))^{-1}Q'(z),
\]
for $z\in\bar\D\cap B_\delta(1)$, $z\neq1$.
A standard calculation (eg.~\cite{ParryPoll90}) shows
that $\lambda'(z)=1+(z-1)\bar\varphi'+O(|z-1|^2)$ and hence $T'(z)$
has a pole of order $1$ at $z=1$.  In particular,
the function $B'(z)=(1-z)T'(z)$ is analytic on $\D_a$ for some $a>0$.
Moreover, $B'(1)=(1/\bar\varphi')P$.   Thus we can write
$B'(z)=(1/\bar\varphi')P+(1-z)J'(z)$ where $J'(z)$ is analytic on $\D_a$.
Now divide by $(1-z)$ to obtain the formula for $T'(z)$ in Lemma~\ref{lem-T}.

We end this subsection by deriving a formula for $J'(z)$.   Write
\[
P'(z)=P+(z-1)P_1'(z), \qquad
\lambda'(z)=1+(z-1)\{\bar\varphi'+\tilde\lambda'(z)\},
\]
where $\tilde\lambda'(1)=0$.  Then (at least formally),
\[
\Bigl(\frac{1-\lambda'(z)}{1-z}\Bigr)^{-1}
= \frac{1}{\bar\varphi'}\Bigl(1+\frac{1}{\bar\varphi'}\tilde\lambda'(z)\Bigr)^{-1}= 
\frac{1}{\bar\varphi'}-\Bigl(\frac{1}{\bar\varphi'}\Bigr)^2
\tilde\lambda'(z)\Bigl(1+\frac{1}{\bar\varphi'}\tilde\lambda'(z)\Bigr)^{-1},
\]
and hence
\begin{align}  \label{eq-H1}
J'(z)=\begin{cases} (z-1)^{-1}(1/\bar\varphi')^2\tilde\lambda'(z)\bigl\{1+(1/\bar\varphi')\tilde\lambda'(z)\bigr\}^{-1}P \\[.75ex]
 \qquad \qquad  \qquad
-(1/\bar\varphi')\{1+(1/\bar\varphi')\tilde\lambda'(z)\bigr\}^{-1}P_1'(z)  \\[.75ex]
 \qquad \qquad  \qquad
+(I-R'(z))^{-1}Q'(z), & z\in \D_a\cap B_\delta(1) \\[1.25ex]
(I-R'(z))^{-1}\;-\;(1-z)^{-1}(1/\bar\varphi')P, & z\in\D_a\setminus B_\delta(1)
\end{cases}
\end{align}

\subsection{Rigorous calculation on $\D_a$}
\label{sec-rigour}

By (H2)(i), we can choose a closed loop $\Gamma\in\C\setminus\spec R(1)$ 
separating $1$ from the 
remainder of the spectrum of $R(1)$.
There exists $\delta>0$ such that the spectrum of
$R(z)$ does not intersect $\Gamma$ for $z\in \bar\D\cap B_\delta(1)$ and we can define
the spectral projection
\begin{align} \label{eq-P}
P(z)=\frac{1}{2\pi i}\int_\Gamma (\xi I-R(z))^{-1}\,d\xi.
\end{align}
For $z\in \bar\D\cap B_\delta(1)$, define the corresponding eigenvalue $\lambda(z)$, so $R(z)P(z)=\lambda(z)P(z)$,
and the complementary projection $Q(z)=I-P(z)$.

For $k$ sufficiently large, and $z$ close enough to $1$,
we can define similarly $\lambda'(z)$, $P'(z)$ and $Q'(z)$.
The next result is a uniform version of this statement.

\begin{prop}  \label{prop-first}
Suppose that $a=a(k)$ satisfies $\lim_{k\to\infty}aS_0(k,a)\to0$. Then
\begin{itemize}
\item[(a)]  For any $\delta>0$, there exists $k_0\ge1$
such that $\|(I-R'(z))^{-1}\|\ll1$ for $k\ge k_0$,
$z\in\D_a\setminus B_\delta(1)$.
\item[(b)] There exists $\delta>0$ and $k_0\ge1$ such that
for all $k\ge k_0$ there exists a continuous family 
$z\mapsto\lambda'(z)$, $z\in\D_a\cap B_\delta(1)$, of
simple eigenvalues for $R'(z)$ satisfying $\lambda'(1)=1$.
Moreover, $|\lambda'(z)|\ll 1$ for $k\ge k_0$,
$z\in\D_a\cap B_\delta(1)$.
\item[(c)] The spectral projections $P'(z)$ corresponding to the
eigenvalues $\lambda'(z)$ satisfy $\|P'(z)\|\ll 1$ for $k\ge k_0$,
$z\in\D_a\cap B_\delta(1)$.
\item[(d)]  $\|(I-R'(z))^{-1}Q'(z)\|\ll 1$ for $k\ge k_0$,
$z\in\D_a\cap B_\delta(1)$.
\end{itemize}
\end{prop}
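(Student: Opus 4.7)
The plan is to reduce the four assertions to analytic perturbation theory for $R(1)$ by establishing the uniform closeness $\|R'(z) - R(\hat z)\| \to 0$ as $k \to \infty$, uniformly for $z \in \D_a$, where $\hat z = z/\max(1, |z|) \in \bar\D$ is the radial projection of $z$ onto the closed unit disk. Once this is in hand, each of (a)--(d) is a routine consequence of (H2) combined with standard contour-integral spectral projections.

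For the uniform closeness I would proceed in two sub-steps. On $\bar\D$ itself, the identity $R(z) - R'(z) = \sum_{n \ge k} R_n(z^n - z^k)$ together with the bound $|z^n - z^k| \le 2$ for $|z| \le 1$ yields $\|R(z) - R'(z)\| \le 2 \sum_{n \ge k}\|R_n\|$, which tends to $0$ by (*). For $z \in \D_a$ with $|z| > 1$, since $R'$ is an entire operator-valued polynomial, I would integrate its derivative $(R')'(w) = \sum_{n=1}^{k-1} n R_n w^{n-1} + k w^{k-1} R_k'$ along the radial segment from $\hat z$ to $z$ of length at most $e^a - 1 \le a e^a$. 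A summation-by-parts argument, writing $\|R_n\| = T_{n-1} - T_n$ with $T_j = \sum_{\ell > j}\|R_\ell\|$ and using that $\sum n\|R_n\| < \infty$ from (*), bounds the resulting supremum over the segment by a constant multiple of $S_0(k,a)$ plus bounded tail terms, so that multiplication by $a$ yields $\|R'(z) - R'(\hat z)\| = O(a S_0(k,a)) \to 0$.

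Granting this closeness, (a) follows because, by (H2)(ii) and continuity, $(I - R(\hat z))^{-1}$ is uniformly bounded on the compact set $\bar\D \setminus B_{\delta/2}(1)$; a Neumann-series expansion then transfers this bound to $(I - R'(z))^{-1}$ on $\D_a \setminus B_\delta(1)$ for $k \ge k_0$, taking $a$ small enough that $\hat z$ stays outside $B_{\delta/2}(1)$. For (b) and (c) I would fix a loop $\Gamma \subset \C \setminus \spec R(1)$ separating $1$ from the rest of $\spec R(1)$, available by (H2)(i); continuity gives a uniform bound on $(\xi I - R(\hat z))^{-1}$ for $\xi \in \Gamma$ and $\hat z$ near $1$, and the closeness transfers this to $R'(z)$, so $P'(z) = \frac{1}{2\pi i}\int_\Gamma(\xi I - R'(z))^{-1}\,d\xi$ is well defined, has the same rank as $P$ (namely one), and is uniformly bounded. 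The simple eigenvalue $\lambda'(z)$ is then read off the one-dimensional action of $R'(z)$ on the range of $P'(z)$ and inherits these bounds. For (d), $(I - R'(z))^{-1} Q'(z) = \frac{1}{2\pi i}\int_\Gamma(1 - \xi)^{-1}(\xi I - R'(z))^{-1}\,d\xi$ is uniformly bounded since $\Gamma$ is bounded away from $1$ and the resolvent is uniformly bounded on $\Gamma$.

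The main obstacle is the summation-by-parts step in the derivative bound, where the hypothesis $a S_0(k,a) \to 0$ must be matched against $\sum_{n=1}^{k-1} n\|R_n\| e^{an}$ and $k e^{ak} \sum_{j \ge k}\|R_j\|$. This is the technical heart of the argument and the place where the precise power of $a$ in the hypothesis becomes important; everything else amounts to standard Kato-style perturbation of simple isolated spectrum.
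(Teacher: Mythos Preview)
Your approach is essentially the paper's: both reduce (a)--(d) to standard Kato-type perturbation theory by showing $\sup_{z\in\D_a}\|R'(z)-R(\hat z)\|\to0$, split into the $\bar\D$-comparison $\|R(z)-R'(z)\|\le 2\sum_{j>k}\|R_j\|$ and the radial step $\|R'(z)-R'(\hat z)\|$. The paper bounds the latter directly as $\sum_{j=1}^k\|R'_j\|(e^{ja}-1)$ and Abel-sums to $(e^a-1)S_0(k,a)$; your FTC route, done exactly, gives the same expression.

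One caution: your phrasing ``bounds the resulting supremum over the segment by a constant multiple of $S_0(k,a)$'' is not quite right as stated. The pointwise bound $\|(R')'(w)\|\le\sum_{n=1}^k n\|R'_n\|e^{(n-1)a}$ involves, after Abel summation, a term of order $aS_1(k,a)$ in addition to $S_0(k,a)$, and $a\cdot aS_1(k,a)\to0$ does not follow from $aS_0(k,a)\to0$ alone. The fix is simply not to pass to the supremum: integrate the derivative exactly along the radial segment $w=t\hat z$, $t\in[1,|z|]$, obtaining $\int_1^{|z|}\sum_n n\|R'_n\|t^{n-1}\,dt=\sum_n\|R'_n\|(|z|^n-1)\le\sum_n\|R'_n\|(e^{na}-1)$, which is precisely the paper's starting point for the Abel computation. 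Your contour formula for (d) is correct up to an immaterial sign.
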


\begin{proof}  
We break the proof into three steps.   First we work with $R(z)$, etc, 
on $\bar\D$.   Second, we consider $R'(z)$, etc, on $\bar\D$.
Third, we consider $R'(z)$, etc, on $\D_a$.

\noindent 1.)
By (*), $R(z)$ is uniformly convergent and hence continuous on $\bar\D$.
Thus the results for $(I-R(z))^{-1}$, $\lambda(z)$, $P(z)$
and $(I-R(z))^{-1}Q(z)$ follow from (H2).

\noindent 2.)
Note that $\|R(z)-R'(z)\|\le 2\sum_{j>k}\|R_j\|\to0$ as $k\to\infty$
uniformly on $\bar\D$ by (*).  Hence the results for $(I-R'(z))^{-1}$, 
$\lambda'(z)$, $P'(z)$ and $(I-R'(z))^{-1}Q'(z)$ on $\bar\D$ follow from step~1
and the resolvent identity.

\noindent 3.)
We claim that $\|R'(e^{a+ib})-R'(e^a)\|\ll a S_0(k,a)$.  By
assumption, $aS_0(k,a)\to0$
as $k\to0$, so the result follows from step~2
and the resolvent identity.

To verify the claim, compute that
\begin{align*}
& \|R'(e^{a+ib})-R'(e^{ib})\|  \le \sum_{j=1}^k\|R'_j\|(e^{ja}-1)
\\ & = \sum_{j=1}^k (\sum_{\ell\ge j}\|R'_\ell\|)(e^{ja}-1)
- \sum_{j=1}^k (\sum_{\ell\ge j+1}\|R'_\ell\|)(e^{ja}-1)
\\ & = \sum_{j=1}^k (\sum_{\ell\ge j}\|R'_\ell\|)(e^{ja}-1)
- \sum_{j=1}^k (\sum_{\ell\ge j}\|R'_\ell\|)(e^{(j-1)a}-1)
\\ & = (e^a-1)\sum_{j=0}^{k-1} (\sum_{\ell >j}\|R'_\ell\|)e^{ja}
= (e^a-1)\sum_{j=0}^{k-1} (\sum_{\ell >j}\|R_\ell\|)e^{ja}
= (e^a-1)S_0(k,a),
\end{align*}
as required.
\end{proof}

Define the polynomials of degree $k-1$,
\[
R_1'(z)=\frac{R'(z)-R'(1)}{z-1}, \quad
\tilde R'(z)=\frac{R'(z)-R'(1)}{z-1}-\frac{dR'}{dz}(1)=R'_1(z)-R'_1(1).
\]
Similarly, starting from $P'$ and $\lambda'$ instead of $R'$, define the analytic functions
$P_1'(z)$, $\tilde P'(z)$, $\lambda_1'(z)$ and $\tilde \lambda'(z)$.

\begin{prop}    \label{prop-tilde}
Suppose that $a=a(k)$ satisfies $\lim_{k\to\infty}aS_0(k,a)\to0$. 
There exists $\delta>0$ and $k_0\ge1$ such that for all $k\ge k_0$
and $z\in \D_a\cap B_\delta(1)$,
\begin{itemize}
\item[(a)] $\|P'_1(z)\|\ll \|R'_1(z)\|$ and $|\lambda'_1(z)| \ll \|R'_1(z)\|$.
\item[(b)] $|\tilde\lambda'(z)|\ll \|\tilde R'(z)\|+|z-1|\|R_1'(z)\|^2$.
\end{itemize}
\end{prop}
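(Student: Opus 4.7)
The plan is to work with an explicitly normalized eigenvector rather than trying to estimate $\lambda'_1(z)$ directly through the spectral projection. Since $v_1=1_Y\in\mathcal{B}(Y)$ and the functional $\nu(w)=\int_Y w\,d\mu$ is bounded on $\mathcal{B}(Y)$ (because $|v|_1\le\|v\|$), the spectral projection at $z=1$ factorises as $Pw=(\nu w)v_1$ with $R'(1)v_1=v_1$, $\nu R'(1)=\nu$ and $\nu v_1=1$. For $z\in\D_a\cap B_\delta(1)$ and $k$ large, Proposition~\ref{prop-first} (together with the uniform convergence $\|R(z)-R'(z)\|\to 0$ used in its proof) keeps $P'(z)$ close to $P$, so $\alpha(z):=\nu(P'(z)v_1)$ is close to $1$ and bounded away from zero. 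I set
\[
v(z)=\alpha(z)^{-1}P'(z)v_1,
\]
which lies in the one-dimensional range of $P'(z)$ and is therefore a right eigenvector, $R'(z)v(z)=\lambda'(z)v(z)$, with the normalisations $\nu v(z)=1$ and $v(1)=v_1$.

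The key identity comes from applying $\nu$ to the eigenvalue equation: using $R'(z)=R'(1)+(z-1)R'_1(z)$, $\nu R'(1)=\nu$ and $\nu v(z)=1$, I get $\lambda'(z)=\nu R'(z)v(z)=1+(z-1)\nu R'_1(z)v(z)$, equivalently
\[
\lambda'_1(z)=\nu R'_1(z)v(z).
\]
The eigenvalue bound in (a) is then immediate: $|\lambda'_1(z)|\le\|\nu\|\,\|R'_1(z)\|\,\|v(z)\|\ll\|R'_1(z)\|$. The projection bound in (a) comes separately from the resolvent identity applied to the Dunford integral for $P'(z)$,
\[
P'_1(z)=\frac{1}{2\pi i}\int_\Gamma(\xi I-R'(z))^{-1}R'_1(z)(\xi I-R'(1))^{-1}\,d\xi,
\]
combined with the uniform bounds on both resolvents provided by Proposition~\ref{prop-first}.

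For (b) I subtract the main identity at $z=1$:
\[
\tilde\lambda'(z)=\nu R'_1(z)v(z)-\nu R'_1(1)v_1=\nu\tilde R'(z)v_1+\nu R'_1(z)(v(z)-v_1).
\]
The first term is $\ll\|\tilde R'(z)\|$. For the second I need $\|v(z)-v_1\|\ll|z-1|\,\|R'_1(z)\|$; this follows from the decomposition
\[
v(z)-v_1=\alpha(z)^{-1}\bigl((P'(z)-P)v_1-(\alpha(z)-1)v_1\bigr),\qquad \alpha(z)-1=\nu((P'(z)-P)v_1),
\]
together with $\|P'(z)-P\|=|z-1|\,\|P'_1(z)\|\ll|z-1|\,\|R'_1(z)\|$ from the previous paragraph and the uniform lower bound on $|\alpha(z)|$. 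Hence $|\nu R'_1(z)(v(z)-v_1)|\ll\|R'_1(z)\|\cdot|z-1|\,\|R'_1(z)\|=|z-1|\,\|R'_1(z)\|^2$.

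The subtle point, and the one real obstacle, is getting the quadratic power $\|R'_1(z)\|^2$ in part (b) rather than just $\|R'_1(z)\|$. A purely operator-level approach that differences $\lambda'_1(z)P'(z)-\lambda'_1(1)P'(1)$ inside the Dunford integral produces an extra $\lambda'_1(1)(P'(z)-P'(1))$ contribution of size $|z-1|\,\|R'_1(z)\|$, which cannot be absorbed into the target bound. Reducing to scalars via the eigenvector $v(z)$ normalised by $\nu v(z)=1$ is precisely what enforces the hidden cancellation $\nu(v(z)-v_1)=0$ and promotes the offending linear term into a quadratic one.
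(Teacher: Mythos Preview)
Your argument is correct. The identity $\lambda'_1(z)=\nu R'_1(z)v(z)$ obtained by pairing the eigenvalue equation with the left eigenfunctional $\nu$ is exactly right (using $R'(1)=R$ and $\nu R=\nu$), and your differencing at $z=1$ followed by $\|v(z)-v_1\|\ll|z-1|\,\|R'_1(z)\|$ cleanly delivers the quadratic term in~(b). Part~(a) is essentially the same as in the paper.

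For part~(b), however, you have taken a genuinely different route from the paper. The paper stays entirely at the operator level: it first proves $\|\tilde P'(z)\|\ll\|\tilde R'(z)\|+|z-1|\,\|R'_1(z)\|^2$ by expanding the Dunford integral to second order in the resolvent (writing the integrand as a sum $I+II$ where $I$ involves $\tilde R'(z)$ and $II$ involves a product of resolvent differences giving the quadratic contribution), then passes to $S'(z)=R'(z)P'(z)$ and finally extracts $\tilde\lambda'(z)$ from an identity relating $\tilde S'$, $\tilde P'$ and $(\lambda'(z)-1)(P'(z)-P'(1))$. Your approach bypasses the intermediate operator quantities $\tilde P'$ and $\tilde S'$ entirely by reducing to a scalar via the normalised eigenvector; this is shorter and more transparent for the eigenvalue estimate alone. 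The paper's approach, on the other hand, yields the bound on $\tilde P'(z)$ as a byproduct, though in fact that bound is not used elsewhere in the paper. One small correction to your closing commentary: the paper \emph{does} succeed with a purely operator-level argument---the point is that it uses a second-order resolvent expansion rather than the naive first-order differencing you warn against, and that second-order expansion is precisely what produces the $|z-1|\,\|R'_1(z)\|^2$ term on the operator side.
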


\begin{proof}
(a) The estimate for $P'_1$ follows from 
equation~\eqref{eq-P} and the resolvent identity.
Next,
\begin{align*}
(\lambda'(z)-1)P & = 
\lambda'(z)P'(z)-\lambda'(1)P'(1)\;-\; \lambda'(z)(P'(z)-P'(1)) \\
& = R'(z)P'(z)-R'(1)P'(1)\;-\; \lambda'(z)(P'(z)-P'(1)),
\end{align*}
so the estimate for $\lambda'_1$ follows from the estimate for $P'_1$.

\noindent(b)   By equation~\eqref{eq-P}, $\tilde P'(z)=\frac{1}{2\pi i}\int_\Gamma (I+II)\,d\xi$, where 
\begin{align*}
I & =(\xi I-R'(1))^{-1}\tilde R'(z)(\xi I-R'(1))^{-1}   \\[.75ex]
II & = \{(\xi I-R'(z))^{-1}-(\xi I-R'(1))^{-1}\}
R_1'(z)(\xi I-R'(1))^{-1}.
\end{align*}
Then $\|I\|\ll \|\tilde R'(z)\|$ and 
$\|II\|\ll |z-1|\|R_1'(z)\|^2$.   
Hence $\|\tilde P'(z)\|\ll \|\tilde R'(z)\|+{|z-1|}\|R_1'(z)\|^2$.

Next, define $S'(z)=R'(z)P'(z)$ and correspondingly $\tilde S'(z)$.
Then
\[
\tilde S'(z)=R'(z)\tilde P'(z)+\tilde R'(z)P'(1)+(R'(z)-R'(1))\frac{dP'}{dz}(1).
\]
so that $\|\tilde S'(z)\|\ll \|\tilde R'(z)\|+|z-1|\|R_1'(z)\|^2$.
Finally, 
\[
\tilde\lambda'(z)P=\tilde S'(z)-\tilde P'(z)-(z-1)^{-1}(\lambda'(z)-\lambda'(1))(P'(z)-P'(1)),
\]
yielding the required estimate for $\tilde \lambda'$.
\end{proof}

\begin{prop} \label{prop-RR}   Let $r\in[0,1]$, $a>0$, $b\in[0,2\pi]$.  
\begin{itemize} 
\item[(a)] $\|R'_1(e^{a+ib})-R'_1(e^{ib})\|\ll a^rS_r(k,a)$.
\item[(b)] $\|R'_1(z)\|\ll 1+a^rS_r(k,a)$ for all $z\in \D_a$.
\item[(c)] For any $\epsilon>0$, there exists $\delta>0$, $k_0\ge1$ such that
$\|\tilde R'(z)\|\le \epsilon +a^rS_r(k,a)$ for all 
$k\ge k_0$ and $z\in\D_a\cap B_\delta(1)$.
\end{itemize} 
\end{prop}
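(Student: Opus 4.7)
The plan hinges on one algebraic observation and one analytic inequality. The algebraic observation is that, since $R'_n=R_n$ for $n\le k$ and vanishes otherwise, a geometric-series rearrangement yields
\begin{equation*}
R'_1(z) \;=\; \sum_{n=1}^{k} R_n\,\frac{z^n-1}{z-1} \;=\; \sum_{j=0}^{k-1} A_j\,z^j, \qquad A_j\;:=\;\sum_{n=j+1}^{k}R_n,
\end{equation*}
with the $k$-uniform domination $\|A_j\|\le\beta_j:=\sum_{\ell>j}\|R_\ell\|$; by hypothesis~(*), $\sum_{j\ge 0}\beta_j<\infty$. The analytic inequality I would record is $e^x-1\le e\,x^r e^x$ for $x\ge 0$ and $r\in[0,1]$, which is exactly what converts a ``first-order'' factor $e^{aj}-1$ into the summand $a^r\cdot j^r e^{aj}$ appearing in $S_r(k,a)$.

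With this setup in hand, (a) is essentially automatic: the difference $R'_1(e^{a+ib})-R'_1(e^{ib})=\sum_j A_j e^{ibj}(e^{aj}-1)$ has norm bounded by $e\,a^r\sum_j\beta_j j^r e^{aj}\ll a^r S_r(k,a)$. For (b), split $e^{aj}=1+(e^{aj}-1)$ to get $\|R'_1(z)\|\le\sum_j e^{aj}\|A_j\|\le\sum_j\beta_j+e\,a^r\sum_j\beta_j j^r e^{aj}$; the first sum is the $O(1)$ constant term guaranteed by~(*) and the second is $\ll a^r S_r(k,a)$ by the same computation as in~(a).

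Part (c) is where the real work lies, and where uniformity in $k$ becomes the main obstacle. My plan is to split
\begin{equation*}
\tilde R'(z) \;=\; \bigl(R'_1(z)-R'_1(z/|z|)\bigr) \;+\; \bigl(R'_1(z/|z|)-R'_1(1)\bigr),
\end{equation*}
separating a radial from a circular variation. For $|z|>1$ the first bracket matches the setting of~(a) with $a$ replaced by $\log|z|\in(0,a]$, hence is $\ll a^r S_r(k,a)$ by monotonicity of $a\mapsto a^r S_r(k,a)$; for $|z|\le 1$, the direct expansion $|z^j-1|\le\min(|z-1|j,2)$ folds the first bracket into the same kind of estimate as the second. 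The second bracket, with $e^{ib}=z/|z|$ close to $1$, satisfies
\begin{equation*}
\|R'_1(e^{ib})-R'_1(1)\| \;\le\; \sum_{j=0}^{k-1}\min(|b|j,\,2)\,\beta_j.
\end{equation*}
Given $\epsilon>0$, the plan is to choose $N$ so that $2\sum_{j>N}\beta_j<\epsilon/2$ --- possible independently of $k$ thanks to~(*) --- then choose $\delta>0$ small enough that $\delta N\sum_{j\le N}\beta_j<\epsilon/2$, and set $k_0=N+1$; on $\D_a\cap B_\delta(1)$ with $k\ge k_0$ this yields the desired $\epsilon$ bound. The main subtlety throughout is that the individual $A_j$ depend on $k$, and the entire argument is made possible only by the $k$-independent dominator $\beta_j$; this is precisely where the full strength of hypothesis~(*) enters.
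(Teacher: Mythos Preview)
Your argument is correct in substance, with one small slip worth noting. You assert that $R'_n=R_n$ for all $n\le k$, but in fact $R'_k=R\,1_{\{\varphi'=k\}}=R\,1_{\{\varphi\ge k\}}=\sum_{\ell\ge k}R_\ell$, since the truncation $\varphi'=\min\{\varphi,k\}$ collapses all levels $\ge k$ onto $k$. Redoing the coefficient extraction with $R'_n$ in place of $R_n$ gives the $k$-independent $A_j=\sum_{\ell>j}R_\ell$ (the paper's $U_j$), not your truncated $\sum_{n=j+1}^k R_n$. Fortunately this is harmless: in either case $\|A_j\|\le\beta_j$, which is all you use downstream. (A side benefit of the corrected formula is that $R'_1(z)$ is literally the $k$-th partial sum of a fixed power series, which also explains why no nontrivial $k_0$ is needed in your argument for~(c).)

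Your route differs from the paper's in two respects. For~(a) the paper does not use a single pointwise inequality but instead splits the sum at a cutoff $M$, bounding $e^{ja}-1$ by $aj\,e^{ja}$ for $j\le M$ and by $e^{ja}$ for $j\ge M$, and then optimises $M\approx 1/a$ to produce the factor $a^r$; your inequality $e^x-1\le e\,x^r e^x$ accomplishes the same thing in one line and is cleaner. For~(c) the paper introduces the $k$-independent limit $S(z)=\sum_{j\ge0}U_j z^j$, uses its continuity at $z=1$ to obtain the $\epsilon$ bound on $\bar\D\cap B_\delta(1)$, and then invokes~(a) to pass from $\bar\D$ to $\D_a$; you instead dominate termwise by $\beta_j$ and split the tail at $N$ directly. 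These are really two packagings of the same idea --- uniform control via the $k$-free majorant --- and both work.
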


\begin{proof}
Write $U_j=\sum_{\ell >j}R_\ell$.
The same calculation as the one used in the proof of 
Proposition~\ref{prop-first} shows that
$R'_1(z)=\sum_{j=1}^k\sum_{\ell\ge j}R'_\ell z^{j-1}=
\sum_{j=0}^{k-1} U_jz^j$.
Hence
\begin{align*}
\|R'_1(e^{a+ib})-R'_1(e^{ib})\| & \le \sum_{j=1}^{k-1} \|U_j\|(e^{ja}-1)
 \le a\sum_{j=1}^M j\|U_j\| e^{ja}+
\sum_{j=M}^k \|U_j\| e^{ja} \\
 & \le aM^{1-r}\sum_{j=1}^M j^r\|U_j\| e^{ja}+ M^{-r}\sum_{j=M}^k j^r \|U_j\| e^{ja} 
\end{align*}
so taking $M\approx 1/a$ yields part (a).

By (*), $R'_1$ is bounded on $\bar\D$ uniformly in $k$.
Hence (b) follows from (a).

Let $S(z)=\sum_{j=0}^\infty U_jz^j$
and note that $S$ is absolutely summable on $\bar\D$ by (*).
In particular, $R'_1$ and $S$ are continuous on $\bar\D$.
Moreover, $S$ is independent of $k$ and we can choose $\delta$ so
that $\|S(z)-S(1)\|<\epsilon/2$ for $z\in\bar\D\cap B_\delta(1)$.   Choose $k_0$ so that $\|S(z)-R'_1(z)\|
<\epsilon/4$ for all $z\in\bar\D$, $k\ge k_0$.
Writing $\tilde R'(z)=R'_1(z)-R'_1(1)=(S(z)-S(1))-(S(z)-R'_1(z))+(S(1)-R'_1(1))$
we obtain that
$\|\tilde R'\|<\epsilon$ for all $z\in\bar\D\cap B_\delta(1)$,
$k\ge k_0$.
Hence (c) follows from (a).
\end{proof}

Recall that the definition of $J'$ in~\eqref{eq-H1} relied on the invertibility of
$1+(1/\bar\varphi')\tilde\lambda'(z)$.

\begin{cor} \label{cor-J}
If $a^rS_r(k,a)\to0$ for some $r\in(0,1]$, then there exists $k_0\ge1$ such that
$J'$ is well-defined on $\D_a$ and
$\|J'(z)\|\ll |z-1|^{-1}\|\tilde R'(z)\|+1$ for all $k\ge k_0$
and $z\in\D_a$.
\end{cor}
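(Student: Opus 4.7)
The plan is to unpack the piecewise formula \eqref{eq-H1} for $J'$ and estimate each term using Propositions~\ref{prop-first}, \ref{prop-tilde} and~\ref{prop-RR}. First I will establish well-definedness, i.e.\ the invertibility of $1+(1/\bar\varphi')\tilde\lambda'(z)$ on $\D_a\cap B_\delta(1)$. By Proposition~\ref{prop-tilde}(b), $|\tilde\lambda'(z)|\ll \|\tilde R'(z)\|+|z-1|\|R'_1(z)\|^2$. Proposition~\ref{prop-RR}(b) combined with the hypothesis $a^rS_r(k,a)\to0$ gives $\|R'_1(z)\|\ll 1$ uniformly on $\D_a$ for $k\ge k_0$, and Proposition~\ref{prop-RR}(c) allows us to shrink $\delta$ and enlarge $k_0$ so that $\|\tilde R'(z)\|$ is as small as we wish on $\D_a\cap B_\delta(1)$. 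Together with $|z-1|<\delta$, this forces $|\tilde\lambda'(z)|<\bar\varphi'/2$ on $\D_a\cap B_\delta(1)$, so $\{1+(1/\bar\varphi')\tilde\lambda'(z)\}^{-1}$ exists and is uniformly bounded in norm. (Moreover, shrinking $\delta$ further if necessary, the family $\lambda'(z),P'(z),Q'(z)$ from Proposition~\ref{prop-first} remains well-defined throughout $\D_a\cap B_\delta(1)$.)

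Next, for $z\in\D_a\cap B_\delta(1)$, I estimate the three terms appearing in \eqref{eq-H1} separately. The crucial first term is $(z-1)^{-1}(1/\bar\varphi')^2\tilde\lambda'(z)\{1+(1/\bar\varphi')\tilde\lambda'(z)\}^{-1}P$; using Proposition~\ref{prop-tilde}(b) and the bound $\|R'_1(z)\|\ll 1$ from Proposition~\ref{prop-RR}(b), its norm is dominated by $|z-1|^{-1}(\|\tilde R'(z)\|+|z-1|\|R'_1(z)\|^2)\ll |z-1|^{-1}\|\tilde R'(z)\|+1$. The second term is controlled by Proposition~\ref{prop-tilde}(a) combined with $\|R'_1(z)\|\ll 1$, giving $\|P'_1(z)\|\ll 1$. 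The third term is bounded directly by Proposition~\ref{prop-first}(d). Summing gives $\|J'(z)\|\ll |z-1|^{-1}\|\tilde R'(z)\|+1$ on this region. For $z\in\D_a\setminus B_\delta(1)$, the bound $\|(I-R'(z))^{-1}\|\ll 1$ from Proposition~\ref{prop-first}(a) combined with $|z-1|\ge \delta$ immediately yields $\|J'(z)\|\ll 1$.

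It remains to note that the two pieces of \eqref{eq-H1} are consistent: on the overlap $\D_a\cap B_\delta(1)\setminus\{1\}$, the spectral decomposition $(I-R'(z))^{-1}=(1-\lambda'(z))^{-1}P'(z)+(I-R'(z))^{-1}Q'(z)$ together with the Neumann expansion of $(1-\lambda'(z))^{-1}$ around $z=1$ (as in Subsection~\ref{sec-formal}) shows that $(I-R'(z))^{-1}-(1-z)^{-1}(1/\bar\varphi')P$ equals the upper expression in \eqref{eq-H1}, so $J'$ extends analytically across $\partial B_\delta(1)$ and is a single holomorphic function on $\D_a$.

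The main obstacle is controlling the apparent $(z-1)^{-1}$ singularity in the first term of \eqref{eq-H1}; the only cancellation available comes from Proposition~\ref{prop-tilde}(b), which yields a factor of $\|\tilde R'(z)\|$ (hence the form of the estimate) plus a residual $|z-1|\|R'_1(z)\|^2$ that is only harmless because Proposition~\ref{prop-RR}(b) provides a uniform bound on $\|R'_1(z)\|$ throughout the enlarged disk $\D_a$. This is precisely where the hypothesis $a^rS_r(k,a)\to 0$ is used.
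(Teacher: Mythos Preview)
Your proof is correct and follows essentially the same approach as the paper: both unpack the piecewise formula~\eqref{eq-H1}, use Propositions~\ref{prop-tilde} and~\ref{prop-RR} to bound $\tilde\lambda'$ (ensuring invertibility of $1+(1/\bar\varphi')\tilde\lambda'$) and $P_1'$, and invoke Proposition~\ref{prop-first} for the remaining terms. The only small point you leave implicit is that the hypothesis $a^rS_r(k,a)\to0$ forces $aS_0(k,a)\to0$ (needed to invoke Proposition~\ref{prop-first}); the paper states this explicitly, but it is immediate since $a\le (aj)^r$ for $a<1$, $j\ge1$, $r\in(0,1]$.
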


\begin{proof}
In particular, $aS_0(k,a)\to0$, so Proposition~\ref{prop-first} applies. Hence
$P'$, $\lambda'$, and so on exist and are uniformly bounded on $\D_a$.
By Propositions~\ref{prop-tilde} and~\ref{prop-RR}, $|\tilde\lambda'|\ll
\epsilon+a^rS_r(k,a)+(z-1)(1+aS_r(k,a))^2$, so
choosing $\delta$ and $k_0$ appropriately, we can arrange that
$(1/\bar\varphi')|\tilde\lambda'|<\frac12$ (say).  Hence
the formal expression for $J'$ makes sense.   Moreover all terms in this expression
are uniformly bounded except possibly for $P_1'$ and $(z-1)^{-1}\tilde\lambda'$.
By  Propositions~\ref{prop-tilde} and~\ref{prop-RR}, $\|P'_1\|\ll \|R'_1\|\ll 1+a^rS_r(k,a)\ll1$,
and $|(z-1)^{-1}\tilde\lambda'|\ll |z-1|^{-1}\|\tilde R'\|+\|R'_1\|^2 \ll  |z-1|^{-1}\|\tilde R'\|+1$.
\end{proof}

\begin{pfof}{Lemma~\ref{lem-T}}
Write $U_j=\sum_{\ell >j}R_\ell$.  Then
$\tilde R'(z) =\sum_{j=0}^{k-1}U_j(z^j-1)$ and so
\begin{align*}
\|\tilde R'(z)\| & 
 \ll 
|z-1|\sum_{j=0}^M j \|U_j\| e^{ja}+\sum_{j=M}^k \|U_j\| e^{ja} \\
& \ll |z-1|M^{1-q}\sum_{j=0}^M j^q \|U_j\| 
+M^{-q}\sum_{j=M}^k j^q \|U_j\|.
\end{align*}
Now take $M\approx 1/|z-1|$ to deduce that $\|\tilde R'\|\ll |z-1|^qS_q(k,a)$.
Hence the result follows from Corollary~\ref{cor-J}.
\end{pfof}

\section{Analyticity of $L'(z)$ and the main result}
\label{sec-L}

In this section, we show how to pass from 
$T'(z)=\sum_{n=0}^\infty 1_YL'^n1_Yz^n:\mathcal{B}(Y)\to\mathcal{B}(Y)$ to
$L'(z)=\sum_{n=0}^\infty L'^nz^n:\mathcal{B}(\Delta')\to L^1(\Delta')$.
We continue to suppose that $F$ is a good inducing scheme.
(Recall that $\mathcal{B}(\Delta')$ is the collection of exchangeable
observables.)

If in addition $\mathcal{B}(Y)$ is embedded in $L^\infty(Y)$, then our
results are identical to those for $T'(z)$ while in general we have to be content with cruder estimates that are still sufficient for the results mentioned
in the introduction.

Let $P_{\Delta'}$ denote the 
projection $P_{\Delta'}v=\int_{\Delta'} v\,d\mu_{\Delta'}$.

\begin{lemma} \label{lem-L}
(i) Suppose that $F$ is a good inducing scheme and 
in addition that $\mathcal{B}(Y)$ is embedded in $L^\infty(Y)$.
Let $a=a(k)$ be such that $\lim_{k\to\infty}a^rS_r(k,a)=0$ for some $r\in(0,1]$.
Then there exists $k_0\ge1$ such that for any $q\in(0,1]$, 
$k\ge k_0$, 
\begin{align} \label{eq-L}
L'(z)=(1-z)^{-1}P_{\Delta'}+H'(z)+E'(z),
\end{align}
where $E'(z)$ is a polynomial
of degree at most $k-1$, $H'(z)$ is analytic on the 
disk $\D_a$ and $\sup_{z\in\D_a}|z-1|^{1-q}\|H'(z)\|\ll S_q(k,a)$.

(ii) In the general case of good inducing schemes, the same result holds
except that $\sup_{z\in\D_a}\|H'(z)\|\ll k^2e^{2ka}$.
\end{lemma}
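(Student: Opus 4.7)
The plan is to reduce the study of $L'(z)$ to that of $T'(z)$ via a tower identity and then apply Lemma~\ref{lem-T}. First, I would decompose any $v\in\mathcal{B}(\Delta')$ into its level components $v=\sum_{\ell=0}^{k-1}v_\ell 1_\ell$, where $1_\ell$ is the indicator of the $\ell$-th level and each $v_\ell$ is identified with an element of $\mathcal{B}(Y)$ (this is where exchangeability is used). By tracking the dynamics of $f'$, which sends $(y,\ell)\to(y,\ell+1)$ deterministically until it reaches the top of a column and then jumps to $(Fy,0)$, one derives a renewal-type identity for the generating function,
\[
L'(z)=E'(z)+B(z)\,T'(z)\,C(z),
\]
where $E'(z)$ accounts for the preimage paths that never reach the base $Y$ within $n$ steps, $B(z):\mathcal{B}(Y)\to L^1(\Delta')$ is a lifting operator that distributes level-$0$ mass up the columns with weights $z^\ell$, and $C(z):\mathcal{B}(\Delta')\to\mathcal{B}(Y)$ is a dual projection reading off the top of each column. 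Since $\varphi'\le k$, all three of $E'(z)$, $B(z)$, $C(z)$ are polynomials of degree at most $k-1$.

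Next, substituting the decomposition $T'(z)=(1-z)^{-1}(1/\bar\varphi')P+J'(z)$ from Lemma~\ref{lem-T} yields
\[
L'(z)=E'(z)+\frac{1}{(1-z)\bar\varphi'}\,B(z)\,P\,C(z)+B(z)\,J'(z)\,C(z).
\]
The key normalization computation is that at $z=1$ one has $(1/\bar\varphi')B(1)P\,C(1)=P_{\Delta'}$ as operators on $\mathcal{B}(\Delta')$: $Pu=\int_Y u\,d\mu$ produces a constant, $C(1)$ takes a column sum on $\Delta'$, and lifting via $B(1)$ reproduces the $\mu_{\Delta'}$-integral by virtue of the identity $\mu_{\Delta'}=(\mu\times\mathrm{counting})/\bar\varphi'$. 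Thus the ratio $\bigl(B(z)PC(z)-B(1)PC(1)\bigr)/(1-z)$ is a polynomial which absorbs into $H'(z)$ together with $B(z)J'(z)C(z)$, while $E'(z)$ itself becomes the polynomial of degree at most $k-1$ in the statement.

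It remains to bound $\|H'(z)\|$ on $\D_a$. In case (i) the embedding $\mathcal{B}(Y)\hookrightarrow L^\infty(Y)$ gives pointwise control on each column, and one checks that $B(z)$ and $C(z)$ have operator norms bounded independently of $k$ and $z\in\D_a$; the bound $\sup_{z\in\D_a}|z-1|^{1-q}\|H'(z)\|\ll S_q(k,a)$ then follows directly from the analogous bound on $J'(z)$ in Lemma~\ref{lem-T}. In case (ii) one loses this pointwise control and must use crude estimates: $B(z)$ and $C(z)$ each involve sums over $O(k)$ levels weighted by $|z|^\ell\le e^{ka}$, giving $\|B(z)\|,\|C(z)\|\ll ke^{ka}$ and hence $\|H'(z)\|\ll k^2 e^{2ka}$. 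The main obstacle I anticipate is the first step: deriving the renewal identity with the correct operators $B(z)$, $C(z)$ and verifying the normalization $(1/\bar\varphi')B(1)PC(1)=P_{\Delta'}$, which together require careful bookkeeping of how the tower structure, the inducing measure $\mu$, and the lifted measure $\mu_{\Delta'}$ interact.
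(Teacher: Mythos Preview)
Your overall architecture matches the paper's: your $B(z),C(z),E'(z)$ are exactly the paper's $A'(z),D'(z),E'(z)$, the renewal identity $L'(z)=A'(z)T'(z)D'(z)+E'(z)$ is the Gou\"ezel decomposition the paper uses, and the normalization $(1/\bar\varphi')A'(1)PD'(1)=P_{\Delta'}$ is Corollary~\ref{cor-AD}(a).

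There is, however, a genuine gap in your bound for case~(i). You write that the estimate $\sup_{z\in\D_a}|z-1|^{1-q}\|H'(z)\|\ll S_q(k,a)$ ``follows directly from the analogous bound on $J'(z)$.'' That only handles the term $B(z)J'(z)C(z)$. The other piece of $H'(z)$ is
\[
\frac{1}{(1-z)\bar\varphi'}\bigl(B(z)PC(z)-B(1)PC(1)\bigr)
=\frac{1}{\bar\varphi'}\,B_1(z)PC(1)+\frac{1}{\bar\varphi'}\,B(z)PC_1(z),
\]
where $B_1(z)=(B(z)-B(1))/(z-1)$ and $C_1(z)=(C(z)-C(1))/(z-1)$. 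Uniform boundedness of $B(z)$ and $C(z)$ on $\D_a$ does \emph{not} control these difference quotients: a polynomial of degree $k-1$ that is bounded on $\D_a$ can have a difference quotient of order $k$. What is actually needed is coefficient decay: the paper shows $\|A'_n\|\le\mu(\varphi\ge n)$ and $\|D'_n\|\le\sum_{j>n}\|R_j\|$ (Proposition~\ref{prop-AD} and Corollary~\ref{cor-AD}), and then runs the same partial-summation argument used for $\tilde R'$ in Lemma~\ref{lem-T} to obtain $\|A_1'(z)\|,\|D_1'(z)\|\ll|z-1|^{-(1-q)}S_q(k,a)$. This is precisely where the $L^\infty$ embedding enters for $A'$ (via $|A_n'v|_1\le\mu(\varphi\ge n)|v|_\infty$), and where exchangeability and condition~(*) enter for $D'$. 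Without these coefficient bounds the required estimate on $H'(z)$ does not follow.

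For case~(ii) your crude estimate is essentially what the paper does, though note that in the paper only $A'$ loses the good bound (it is the operator that needed the $L^\infty$ embedding); $D'$ retains its refined estimate from Corollary~\ref{cor-AD}(c).
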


We can now state and prove our main result.

\begin{thm} \label{thm-main}
(i) Suppose that $F$ is a good inducing scheme and 
in addition that $\mathcal{B}(Y)$ is embedded in $L^\infty(Y)$.
 Let $a=a(k)$ be such that $\lim_{k\to\infty}a^rS_r(k,a)=0$ for some $r\in(0,1]$.
Let $q\in(0,1]$.
 Then there exists $C>0$, $k_0\ge1$ such that 
 \[
 |\rho_{v,w}(n)|\le 
 C|v|_\infty |w|_\infty \Bigl(\sum_{j>k}\mu(\varphi>j)+n\mu(\varphi>k)\Bigr)
 +C\|v\||w|_\infty S_q(k,a)e^{-na},
 \]
 for all $v\in \mathcal{B}(X)$, $w\in L^\infty(X)$, $n\ge k\ge k_0$.

(ii) In the general case of good inducing schemes, the same result holds but
with $S_q(k,a)$ replaced by $k^2e^{2ka}$.
 \end{thm}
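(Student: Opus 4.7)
My plan is to combine three ingredients: the tower extension of Section~\ref{sec-tower}, the truncation estimate~\eqref{eq-trunc}, and the spectral decomposition of $L'(z)$ provided by Lemma~\ref{lem-L}. By Section~\ref{sec-tower}, it suffices to bound $\rho_{v,w}(n)$ for observables lifted from $X$ to the tower $\Delta$, since exchangeability transfers from $X$ to $\Delta$ with the norm unchanged. The truncation estimate~\eqref{eq-trunc} then reproduces the term
\[
C|v|_\infty |w|_\infty \Bigl(\sum_{j>k}\mu(\varphi>j)+n\mu(\varphi>k)\Bigr)
\]
appearing in the conclusion, reducing the theorem to an exponential bound on $|\rho'_{v,w}(n)|$ for the truncated system.

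Using the transfer-operator duality $\int_{\Delta'}L'v\cdot w\,d\mu_{\Delta'}=\int_{\Delta'}v\cdot w\circ f'\,d\mu_{\Delta'}$ and the fact that $P_{\Delta'}v=\int v\,d\mu_{\Delta'}$ is constant, I rewrite
\[
\rho'_{v,w}(n)=\int_{\Delta'}(L'^n-P_{\Delta'})v\cdot w\,d\mu_{\Delta'},
\]
so that $|\rho'_{v,w}(n)|\le \|L'^n-P_{\Delta'}\|\cdot\|v\|\cdot|w|_\infty$, where the operator norm is that from $\mathcal{B}(\Delta')$ to $L^1(\Delta')$. I extract the Taylor coefficient $L'^n$ by contour integration: $L'^n=\frac{1}{2\pi i}\oint_{|z|=r}L'(z)z^{-n-1}\,dz$ for $r<1$, and pushing the circle outward to $|z|=e^{a'}$ with $a'<a$ sweeps past the simple pole at $z=1$, contributing exactly the residue $-P_{\Delta'}$ coming from the $(1-z)^{-1}P_{\Delta'}$ term in Lemma~\ref{lem-L}. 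Since $E'(z)$ is a polynomial of degree at most $k-1$, its $z^n$ coefficient vanishes for $n\ge k$; hence, for such $n$,
\[
L'^n-P_{\Delta'}=\frac{1}{2\pi i}\oint_{|z|=e^{a'}}H'(z)z^{-n-1}\,dz.
\]

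For part~(i), the bound $\|H'(z)\|\ll S_q(k,a)/|z-1|^{1-q}$ yields
\[
\|L'^n-P_{\Delta'}\|\ll S_q(k,a)\,e^{-na'}\int_0^{2\pi}\frac{d\theta}{|e^{a'+i\theta}-1|^{1-q}}.
\]
Since $|e^{a'+i\theta}-1|^2\asymp(a')^2+\theta^2$ near $\theta=0$ and $1-q<1$ makes the singularity integrable, the angular integral is uniformly bounded in $a'\in(0,a]$; taking $a'\uparrow a$ and absorbing the residual factor into the constant gives $\|L'^n-P_{\Delta'}\|\ll S_q(k,a)\,e^{-na}$. For part~(ii), the simpler bound $\sup_{z\in\D_a}\|H'(z)\|\ll k^2e^{2ka}$ feeds directly into the same Cauchy estimate, producing $\|L'^n-P_{\Delta'}\|\ll k^2e^{2ka}e^{-na}$ without any angular-integral subtlety.

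The main technical point I anticipate is the contour argument near $z=1$ in part~(i): I must verify that the singular-looking bound $|z-1|^{q-1}$ on $H'$ is harmless on the chosen contour and that the deformation past $z=1$ only picks up the residue of the explicit $(1-z)^{-1}P_{\Delta'}$ summand (either because $H'$ is actually analytic at $z=1$ with the stated bound merely loose there, or because any mild singularity is integrable and absorbed into the Cauchy estimate). Once this is handled, combining the exponential bound on $|\rho'_{v,w}(n)|$ with the truncation estimate immediately yields the stated inequality.
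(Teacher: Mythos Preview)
Your proposal is correct and follows essentially the same route as the paper: reduce to the tower, apply the truncation estimate~\eqref{eq-trunc}, and then use Lemma~\ref{lem-L} together with a Cauchy estimate on the circle $|z|=e^{a'}$ to bound the $n$-th coefficient of $H'$ (the paper matches coefficients directly on $\D$ rather than deforming the contour for $L'$, but this is purely cosmetic). Your residual worry is unfounded: Lemma~\ref{lem-L} states that $H'$ is analytic on all of $\D_a$, so the $|z-1|^{q-1}$ bound is merely a loose upper bound near $z=1$ and the only pole crossed is the explicit one from $(1-z)^{-1}P_{\Delta'}$.
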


\begin{proof}
Suppose that we are in case (i).
Write $H'(z)=\sum_{n=0}^\infty H'_n z^n$, $E'(z)=\sum_{j=0}^{k-1}E_n' z^n$.
Equating coefficients in~\eqref{eq-L} on the open unit disk $\D$, we obtain
\[
L'^n=P_{\Delta'}+H_n'+E'_n,
\]
for $n\ge1$, $k\ge k_0$.    We claim that $\|H'_n\|\ll  S_q(k,a)e^{-na}$
for all  $n\ge1$, $k\ge k_0$.  
It follows that $\|L'_n-P_{\Delta'}\|=\|H'_n\|\ll  S_q(k,a)e^{-na}$ for all $n\ge k\ge k_0$.  Hence
$|\rho'_{v,w}(n)|\ll \|v\||w|_1  S_q(k,a)e^{-na}$ for all $v\in\mathcal{B}(\Delta)$, $w\in L^1(\Delta)$.
The result follows from this estimate combined with~\eqref{eq-trunc}.

It remains to prove the claim.   Since $H'$ is analytic on $\D_a$,
$\|H'_n\|\ll \int_\Gamma \|H'(z)z^{-n}\|dz$ where
$\Gamma$ is the boundary circle of $\D_a$ (for a slightly smaller $a$).   Hence
\begin{align*}
\|H'_n\|\ll e^{-na}\int_0^{2\pi}\|H'(e^{a+ib})\|\,db \ll S_q(k,a)e^{-na}  \int_0^{2\pi}|e^{a+ib}-1|^{-(1-q)}\,db.
\end{align*}
But $|e^{a+ib}-1|\ge |e^a\sin b|\ge|\sin b|$, so
\begin{align*}
 \int_0^{2\pi}|e^{a+ib}-1|^{-(1-q)}\,db & \le  \int_0^{2\pi}|\sin b|^{-(1-q)}\,db=4\int_0^{\pi/2}|\sin b|^{-(1-q)}\,db
\\ & \ll \int_0^{\pi/2}|b|^{-(1-q)}\,db\ll1,
\end{align*}
completing the proof of the claim and hence of case (i).  The proof of case (ii)
is similar.
\end{proof}

\begin{rmk}\label{rmk-main}   
The statement of Theorem~\ref{thm-main}(i) is sufficiently
general for all of our applications except in Example~\ref{ex-sv}
where it is necessary to improve the factor $S_q(k,a)$.    
Such improvements can be achieved by modifying the estimate of $\tilde R(z)$
obtained at the end of the proof of Lemma~\ref{lem-T}.
\end{rmk}

In the remainder of this section, we prove Lemma~\ref{lem-L}.
We focus on case (i), sketching the differences
for case (ii) at the end of the proof.

Write $(L'^nv)(x)=\sum_{f'^nu=x}g'_n(u)v(u)$.
Define operator-valued polynomials 
\[
A'(z):L^\infty(Y)\to L^1(\Delta), \quad
D'(z):\mathcal{B}(\Delta')\to \mathcal{B}(Y), \quad
E'(z):L^\infty(\Delta')\to L^1(\Delta').
\]
as follows:
\begin{align*}
A'(z)& =\sum_{n=0}^{k-1} A'_nz^n, \quad
  (A'_nv)(x) =\sum_{\substack{ f'^ny=x \\ y\in Y;\;f'y\not\in Y,\ldots,f'^ny\not\in Y }}\!\!\!\!\!\!\!\!  g'_n(y)v(y), \\
D'(z)& =\sum_{n=0}^{k-1} D'_nz^n, \quad
 (D'_nv)(y) =\sum_{\substack{ f'^nu=y \\ u\not\in Y,\ldots,f'^{n-1}u\not\in Y;\;
f'^nu\in Y }}\!\!\!\!\!\!\!\! g'_n(u)v(u), \\
E'(z)& =\sum_{n=1}^{k-1} E'_nz^n, \quad
 (E'_nv)(x)  =\sum_{\substack{ f'^nu=x \\ u\not\in Y,\ldots,f'^nu\not\in Y }}
\!\!\!\!\!\!\!\! g'_n(u)v(u).
\end{align*}
(We adopt the convention that $(A'_0v)(y,\ell)=v(y)$ for $\ell=0$ and is zero otherwise, and that $(D'_0v)(y)=v(y,0)$.)

Following Gou\"ezel~\cite{GouezelPhD}, we observe that
\[
\SMALL {L'}^n=\sum_{n_1+n_2+n_3=n}A'_{n_1}T'_{n_2}D'_{n_3}+E'_n.
\]
Hence $L'(z)=A'(z)T'(z)D'(z)+E'(z) :\mathcal{B}(\Delta')\to L^1(\Delta')$.

\begin{prop}  \label{prop-AD}
Let $k\ge1$, $0\le n \le k-1$.   Then
\begin{itemize}
\item[(a)] For $v\in L^1(\Delta')$,
$(A'_nv)(y,\ell)=v(y)$ if $n=\ell$ and is zero otherwise.

\item[(b)]  $(A'(1)v)(y,\ell)=v(y)$ for all $(y,\ell)\in\Delta'$.

\item[(c)] For all $v\in\mathcal{B}(\Delta)$, 
$D'_nv= R(1_{\{\varphi>n\}}\hat v_{k,n})=\sum_{j>n}R_j\hat v_{k,n}$,
where $\hat v_{k,n}\in\mathcal{B}(Y)$ is derived from $v$
(so in particular, $\|\hat v_{k,n}\|\le \|v\|$ for all $k,n$).

\item[(d)] $D'(1)v=RV'$
where $V'(y)=\sum_{\ell=0}^{\varphi'(y)-1}v(y,\ell)$.
\end{itemize}
\end{prop}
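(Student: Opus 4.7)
The plan is to exploit the suspension structure of $\Delta' = Y^{\varphi'}$ and the orbit combinatorics of $f'$ to evaluate each of the four sums explicitly. The key observation I will use throughout is that, along any finite orbit segment of $f'$, the Jacobian $g'_n$ equals $1$ on steps that stay within a single tower fibre and equals the transfer-operator weight $g$ of $R$ on the unique step (if any) that crosses a return to $Y$.

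For (a) I would first observe that $f'^j y = (y, j)$ for all $0 \le j < \varphi'(y)$, so the no-return constraints $f' y, \ldots, f'^n y \notin Y$ are equivalent to $\varphi'(y) > n$, and in that case $f'^n y = (y, n)$. Consequently $A'_n v(y', \ell)$ is nonzero only when $y' = y$ and $\ell = n$; the orbit segment crosses no return, so $g'_n(y) = 1$. Part (b) is then immediate: for any $(y, \ell) \in \Delta'$ one has $\ell \le \varphi'(y) - 1 \le k - 1$, so exactly one term ($n = \ell$) in $A'(1) v(y, \ell) = \sum_{n=0}^{k-1} A'_n v(y, \ell)$ survives, yielding $v(y)$.

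For (c) I would parametrise the pre-images $u = (y_0, j)$ of $y \in Y$ under $f'^n$ satisfying $u, f'u, \ldots, f'^{n-1} u \notin Y$ and $f'^n u \in Y$. A single return at the final step forces $j + n = \varphi'(y_0)$, so $j = \varphi'(y_0) - n$ with $\varphi'(y_0) > n$ and $F y_0 = y$, and the Jacobian reduces to $g(y_0)$. Using $\pi(y_0, \varphi'(y_0) - n) = T^{\varphi'(y_0) - n} y_0$ and $\varphi' = \min(\varphi, k)$, I would take
\[
\hat v_{k,n}(y_0) := v\bigl(T^{\min(\varphi(y_0), k) - n} y_0\bigr)
\]
and verify that on each level set $\{\varphi = j\}$ with $j > n$, the iterate index $\min(j, k) - n$ lies in $\{1, \ldots, j - 1\}$, so $\hat v_{k,n} \in \mathcal{D}_v$ and $\|\hat v_{k,n}\| \le \|v\|$ by exchangeability. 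Substituting yields $D'_n v = R(1_{\{\varphi > n\}} \hat v_{k,n})$, and decomposing $1_{\{\varphi > n\}} = \sum_{j > n} 1_{\{\varphi = j\}}$ together with $R_j = R 1_{\{\varphi = j\}}$ gives the second equality.

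For (d) I would sum the formula from (c) over $n \in \{0, \ldots, k-1\}$, pull the finite $n$-sum inside $R$, and re-index $\ell = \varphi'(y_0) - n$ so that the inner sum reproduces $V'(y_0) = \sum_{\ell = 0}^{\varphi'(y_0) - 1} v(y_0, \ell)$, once the stated convention for $D'_0$ is incorporated. The only genuine subtlety lies in (c): constructing $\hat v_{k,n}$ so that it simultaneously respects the truncation at level $k$ and the derived-observable constraint $j' \in \{1, \ldots, \varphi(y_0) - 1\}$, uniformly in both $\varphi(y_0)$ (which can exceed $k$) and the truncation parameter $k$. The remaining arguments are routine orbit bookkeeping in the tower.
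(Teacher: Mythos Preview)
Your proposal is correct and follows essentially the same approach as the paper: the paper's proof is extremely terse (``Parts (a) and (b) are immediate from the definitions\ldots Parts (c) and (d) follow easily'' after writing $(D'_nv)(y)=\sum^* G(u)v(u,\varphi'(u)-n)$), and your write-up is precisely the natural expansion of that sketch via orbit bookkeeping on the tower. Your explicit verification that the iterate index $\min(j,k)-n$ lands in $\{1,\dots,j-1\}$ on each level set $\{\varphi=j\}$, so that $\hat v_{k,n}$ is genuinely derived, is a detail the paper leaves implicit.
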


\begin{proof}
Parts (a) and (b) are immediate from the definitions.

Write $(Rv)(y)=\sum_{Fu=y}G(u)v(u)$.
Then
$(D'_nv)(y)={\sum}^* G(u)v(u,\varphi'(u)-n)$
where the summation is over $u\in Y$ with $Fu=y$ and $\varphi(u)>n$.
Parts (c) and (d) follow easily.
\end{proof}

Define $A_1'(z)=(1-z)^{-1}(A'(z)-A'(1))$, $\,D_1'(z)=(1-z)^{-1}(D'(z)-D'(1))$.

\begin{cor}  \label{cor-AD}
\begin{itemize}
\item[(a)] $(1/\bar\varphi')A'(1)PD'(1)=P_{\Delta'}$.
\item[(b)]  $\|A'(z)\|_{L^\infty(Y)\to L^1(\Delta)}\ll 1+a^rS_r(k,a)$ and  $\|A_1'(z)\|_{L^\infty(Y)\to L^1(\Delta)}\ll |z-1|^{-(1-q)}S_q(k,a)$ 
for all $q,r\in(0,1]$, $a>0$, $z\in\D_a$.
\item[(c)]  $\|D'(z)\|_{B(\Delta')\to B(Y)}\ll 1+a^rS_r(k,a)$ and  $\|D_1'(z)\|_{B(\Delta')\to B(Y)}\ll |z-1|^{-(1-q)}S_q(k,a)$ 
for all $q,r\in(0,1]$, $a>0$, $z\in\D_a$.
\end{itemize}
\end{cor}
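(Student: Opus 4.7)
The plan is three direct computations, using Proposition~\ref{prop-AD} as the sole nontrivial input.

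For part (a), I would unwind the composition $(1/\bar\varphi')A'(1)PD'(1)$ from the right. Proposition~\ref{prop-AD}(d) rewrites $D'(1)v = RV'$ with $V'(y)=\sum_{\ell=0}^{\varphi'(y)-1} v(y,\ell)$. Since $Pw=(\int_Y w\,d\mu)\,1_Y$ and $R$ preserves $\int\,d\mu$, one has $PD'(1)v=(\int_Y V'\,d\mu)\,1_Y$; Fubini then identifies
\[
\int_Y V'\,d\mu=\int_{\Delta'} v\,d(\mu\times\text{counting})=\bar\varphi'\,P_{\Delta'}v,
\]
recalling $\mu_{\Delta'}=(\mu\times\text{counting})/\bar\varphi'$. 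Proposition~\ref{prop-AD}(b) finishes the calculation via $A'(1)1_Y=1_{\Delta'}$, giving $(1/\bar\varphi')A'(1)PD'(1)v=(P_{\Delta'}v)\cdot 1_{\Delta'}=P_{\Delta'}v$.

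For parts (b) and (c), the workhorse is the interpolation inequality
\[
|z^n-1|\le 2^{1-q} n^q |z-1|^q e^{na}, \qquad z\in \D_a,\; n\ge 1,\; q\in[0,1],
\]
obtained by raising $|z^n-1|\le n|z-1|e^{na}$ (which follows from $z^n-1=(z-1)\sum_{j=0}^{n-1}z^j$) to the power $q$, and $|z^n-1|\le 2e^{na}$ to the power $1-q$. Proposition~\ref{prop-AD}(a) bounds $\|A'_n v\|_{L^1(\Delta)}$ by $|v|_\infty\mu(\varphi>n)/\bar\varphi$, while Proposition~\ref{prop-AD}(c) bounds $\|D'_n v\|_{\mathcal{B}(Y)}$ by $\|v\|\sum_{j>n}\|R_j\|$. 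To bound $A'(z)$ and $D'(z)$ on $\D_a$, I would estimate the power series termwise with $|z|^n\le e^{na}$ and split $e^{na}=1+(e^{na}-1)$; the constant piece yields a bounded sum by $\bar\varphi<\infty$ (for $A'$) and by hypothesis~(*) (for $D'$), while the $(e^{na}-1)$ piece collapses into $a^rS_r(k,a)$ via the interpolation inequality with $q=r$, after using $\mu(\varphi>n)\le\|1_Y\|\sum_{j>n}\|R_j\|$ (which follows from $\mu(\varphi=j)=|R_j 1_Y|_1\le\|R_j\|\|1_Y\|$) to match the form of $S_r$.

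For $A_1'(z)$ and $D_1'(z)$, I would expand $(A'(z)-A'(1))v=\sum_{n\ge 1}(z^n-1)A'_n v$ and analogously for $D_1'$. Then $|z-1|^{1-q}\|A_1'(z)v\|_{L^1(\Delta)}=|z-1|^{-q}\sum_{n\ge 1}|z^n-1|\,\|A'_n v\|_{L^1(\Delta)}$, and applying the interpolation inequality termwise cancels the $|z-1|^{-q}$ prefactor and inserts a factor $n^q e^{na}$, so the sum collapses to $\ll|v|_\infty S_q(k,a)$. There is no conceptual obstruction; the main care lies in choosing the interpolation exponent so that the required $|z-1|^{-(1-q)}$ singularity on the $A_1',D_1'$ side is produced exactly from $|z^n-1|$, while the surviving weight $n^q e^{na}$ remains summable against the tail of $\varphi$ encoded in $S_q(k,a)$.
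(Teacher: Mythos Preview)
Your proof is correct and follows essentially the same approach as the paper: part~(a) is identical, and for parts~(b) and~(c) the paper bounds $\|A'_n\|$ and $\|D'_n\|$ via Proposition~\ref{prop-AD} exactly as you do, then refers back to the splitting-at-$M$ arguments in Proposition~\ref{prop-RR} and the proof of Lemma~\ref{lem-T}. Your direct interpolation inequality $|z^n-1|\le 2^{1-q}n^q|z-1|^q e^{na}$ is a slick repackaging of that same splitting argument (the one minor slip is the equality sign in your display for $|z-1|^{1-q}\|A_1'(z)v\|$, which should be $\le$).
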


\begin{proof}
By Proposition~\ref{prop-AD}(b,d) and the definition of $\mu_{\Delta'}$,
\begin{align*}
(A'(1)PD'(1)v)(y,\ell) & =(PD'(1)v)(y)=\int_Y RV'\,d\mu=\int_Y V'\,d\mu \\ &
=\int_Y\sum_{\ell=0}^{\varphi'(y)-1}v(y,\ell)\,d\mu
=\bar\varphi'\int_{\Delta'} v\,d\mu_{\Delta'},
\end{align*}
proving part (a).

By Proposition~\ref{prop-AD}(a), the support of $A'_n$ has measure $\mu(\varphi\ge n)$ and 
$|A'_nv|_\infty\le |v|_\infty$.
It follows that $|A'_nv|_1\le \mu(\varphi\ge n)|v|_\infty$.
In other words, $\|A'_n\|\le \mu(\varphi\ge n)$.
Hence the estimates in part (b) are obtained in exactly the same
way as the estimates for $R_1'(z)$ and $\tilde R'(z)$  in 
the proof of Lemma~\ref{lem-T}.

By Proposition~\ref{prop-AD}(c), $\|D'_n\|\ll \sum_{j>n}\|R'_j\|$.
Hence the estimates in part (c) are again obtained in exactly the same
way as the estimates for $R_1'(z)$ and $\tilde R'(z)$.
\end{proof}

\begin{pfof}{Lemma~\ref{lem-L}}
By Lemma~\ref{lem-T} and Corollary~\ref{cor-AD}(a),
\begin{align*}
A'(z)T'(z)D'(z) & =(1-z)^{-1}(1/\bar\varphi')\Bigl\{
A'(1)PD'(1)+(A'(z)-A'(1))PD'(1) \\ 
& \qquad\qquad\qquad\qquad \qquad + A'(z)P(D'(z)-D'(1))\Bigr\} \\
& \qquad + A'(z)(T'(z)-(1-z)^{-1}(1/\bar\varphi')P)D'(z) 
\end{align*}
\begin{align*}
&\qquad = (1-z)^{-1}P_{\Delta'}+(1/\bar\varphi')A_1'(z)PD'(1)+(1/\bar\varphi')A'(z)PD_1'(z)+A'(z)J'(z)D'(z),
\end{align*}
and so
\[
L'(z)=
(1-z)^{-1}P_{\Delta'}+(1/\bar\varphi')A_1'(z)PD'(1)+(1/\bar\varphi')A'(z)PD_1'(z)+A'(z)J'(z)D'(z)+E'(z).
\]
Hence, case (i) follows immediately from 
Lemma~\ref{lem-T} and Corollary~\ref{cor-AD}(b,c).

In case (ii), we replace 
Corollary~\ref{cor-AD}(b) by the crude estimates
$\|A'(z)\|_{L^1(Y)\to L^1(\Delta)}\ll ke^{ka}$ and $\|A_1'(z)\|_{L^1(Y)\to L^1(\Delta)}\ll k^2e^{2ka}$.
\end{pfof}

\section{Examples}
\label{sec-ex}

In this section, we consider a number of special cases of 
Theorem~\ref{thm-main}, including the proofs of
the results stated in the introduction.
In Subsection~\ref{sec-AFN}, we verify that the AFN maps described
in the introduction have the desired properties.

\subsection{Calculations for good inducing schemes}

In this subsection, we describe results that follow from
Theorem~\ref{thm-main}(ii).  It is assumed that $F:Y\to Y$ is
a good inducing scheme with $\mathcal{B}(Y)$ embedded in $L^1(Y)$,
and that $\mathcal{B}(X)$ is exchangeable.   The only control required
on $\|R_n\|$ is that  
$\sum_{n=1}^\infty n^\epsilon\sum_{j>n}\|R_j\|<\infty$ for some $\epsilon>0$.

\begin{pfof}{Theorem~\ref{thm-good}}
Let $a(k)=\frac12 \log k/k$.   For $r<\epsilon$, we compute that
\[
S_r(k,a)\le S_r(k,0)e^{ka}\ll e^{ka}=k^{\frac12},
\]
and so $a^rS_r(k,a)\to0$.  By Theorem~\ref{thm-main}(ii), we obtain the estimate
\[
|\rho(n)|\ll \sum_{j>k}\mu(\varphi>j)+n\mu(\varphi>k)
+O(k^3 e^{-\frac12 n\log k/k}).
\]
Now take $k=\delta n$ with $\delta=1/(2p+6)$.
\end{pfof}

\begin{example}[Polynomially decreasing sequences{~\cite[D\'efinition~2.2.11]{GouezelPhD}}] \label{ex-poly}
Suppose that $\sum_{j>n}\|R_j\|=O(1/n^{1+\epsilon})$ for some $\epsilon>0$, and that $\mu(\varphi>n)\ll
u_n$ where $u_n$ has the property that there exists a constant $C>0$
such that  $u_j\le Cu_n$ for all $n\ge1$, $j\ge n/2$.
Then we obtain the optimal upper bound $\rho(n)\ll\sum_{j>n}u_j$.

To see this, first observe that $u_j\le C^k u_n$ for all $j\ge n/2^k$
and taking $2^k\approx n$ we deduce that $n^{-p}\ll u_n$ for
some $p>0$.    Also, for any $\delta>0$ there exists $C=C(\delta)$
such that $u_{[\delta n]}\le Cu_n$.
It follows that $\sum_{j>\delta n}\mu(\varphi>j)\ll \sum_{j>n}u_j$.

Finally, $\frac{n}{2}\mu(\varphi>n)\le 
\sum_{j\ge n/2}\mu(\varphi>j)\ll \sum_{j> n/2}u_j\ll \sum_{j> n}u_{j/2}
\ll \sum_{j> n}u_j$ so 
$n\mu(\varphi>\delta n)\ll \sum_{j>\delta n}u_j\ll \sum_{j>n}u_j$.
This accounts for all the terms in Theorem~\ref{thm-good}.
\end{example}

\begin{example}[Regularly varying sequences, $\beta>0$]
We continue to assume that 
$\sum_{j>n}\|R_j\|=O(1/n^{1+\epsilon})$ for some $\epsilon>0$.
Suppose further that 
$\mu(\varphi>n)\ll u_n=\ell(n)/n^{\beta+1}$ where $\beta>0$
and $\ell$ is a slowly varying function.
(Recall that $\ell:(0,\infty)\to(0,\infty)$ is {\em slowly varying} if
$\lim_{x\to\infty}\ell(\lambda x)/\ell(x)=1$ for all $\lambda>0$.)

Regularly varying sequences are clearly
polynomially decreasing, so we can apply the result of Example~\ref{ex-poly}.
Moreover, $\sum_{j>n}u_j\ll \ell(n)/n^{\beta}$ by a result
of Karamata (see~\cite[Theorem~1, p.~273]{Feller66}).
Hence we obtain the optimal upper bound $|\rho(n)|\ll\ell(n)/n^{\beta}$.
\end{example}

Finally, we consider the standard case of exponential decay of correlations.
\begin{example}[Exponential decay rates]
If $\|R_n\|=O(e^{-cn})$, $c>0$,
then we obtain exponential decay of correlations as expected.
A slight reformulation of Theorem~\ref{thm-main}(ii) is required, where we
modify the condition $\lim_{k\to\infty}a^rS_r(k,a)=0$.
Indeed the only places where the 
condition is used (rather than simply boundedness)
is in Step 3 of the proof of Proposition~\ref{prop-first} and in ensuring that $(1/\varphi')|\tilde\lambda'|<\frac12$ in the proof of 
Corollary~\ref{cor-J}.   
For these it suffices that for any $\epsilon>0$, there exists 
$a=a(k)$ and $r\in(0,1]$ such that $a^rS_r(k,a)<\epsilon$.   

Under the assumption $\|R_n\|=O(e^{-cn})$,
this new condition can be satisfied with $r=1$ and
$a$ chosen to be a sufficiently
small constant $a\equiv\epsilon_1\in(0,c)$.   
(Taking $\epsilon_1<c$ ensures that $S_1(k,a)$ is bounded; the other
requirements on $\epsilon_1$ are less explicit.)
Let $n=k$.  Since $\mu(\varphi=n)\ll \|R_n\|=O(e^{-cn})$,  we obtain
$|\rho(n)|\ll n^2e^{-\epsilon_1 n}$.
\end{example}

\subsection{Calculations for inducing schemes with $\mathcal{B}(Y)\subset L^\infty(Y)$}

In this section, we suppose that $F:Y\to Y$ is an excellent inducing scheme
and that $\mathcal{B}(Y)$ is embedded in $L^\infty(X)$.   As usual, 
we suppose that $\mathcal{B}(X)$ is exchangeable.

Since $\mathcal{B}(Y)$ is embedded in $L^\infty(X)$, we can appeal to
part (i) of Theorem~\ref{thm-main}.   Since $F$ is excellent, hypotheses on
$\varphi$ are inherited by $\|R_n\|$.   (The results in this subsection can
be formulated for good inducing schemes by imposing conditions on $\|R_n\|$
directly but the ensuing results are suboptimal.)

\begin{pfof}{Theorem~\ref{thm-slow}}
We take $q=r=1$ in Theorem~\ref{thm-main}(i).
Let $a=\frac12 k^{-1}\log\log k$.
Then, $S_1(k,a)\le \sum_{j=1}^k (\log^{-1} j) e^{ak}\ll k\log^{-\frac12}k$,
and so $aS_1(k,a)\to0$.
Moreover, $S_1(k,a)e^{-na}\ll  k(\log^{-\frac12}k)e^{-na}\to0$ with
$n=2k\log k/\log\log k$.   In addition,
$n\mu(\varphi>k)\ll (\log\log k)^{-1}\to0$.
Finally, $\sum_{j>k}\mu(\varphi>j)\to0$ since $\varphi\in L^1(Y)$.~
\end{pfof}

\begin{example}[Regularly varying sequences, $\beta=0$]
\label{ex-sv}
We consider the case of regularly varying sequences
$\mu(\varphi>n)\ll \ell(n)/n$ where $\ell(n)\to0$ as $n\to\infty$
(supposing as always that $\varphi\in L^1(Y)$).   
Many such examples were considered by Holland~\cite{Holland05}.

We suppose also that $\ell(n)$ is decreasing.
It follows that $\ell(n)\log n$ is bounded
(since $\ell(n)\log n\ll \ell(n)\sum_{j=1}^n 1/j\le
\sum_{j=1}^n \ell(j)/j\ll 1$).

Take $a(k)=\frac12 k^{-1}\log(1/\ell(k))$.  By Karamata,
\[
S_1(k,a)\le S_1(k,0)e^{ka}\le \ell(k)^{-\frac12}\sum_{j=1}^k \ell(j)\ll k\ell(k)^\frac12,
\]
and it follows that $\lim_{k\to\infty}aS_1(k,a)=0$.   

As mentioned in Remark~\ref{rmk-main}, we require a refinement to the 
estimate of $\|\tilde R'\|$ at the end of the proof of Lemma~\ref{lem-T}.
Recall that $\tilde R'(z) =\sum_{j=0}^{k-1}U_j(z^j-1)$ 
where $U_j=\sum_{\ell >j}R_\ell$.   By assumption,
$\|U_j\|\ll \ell(j)/j$.
By Karamata and the assumption that $\ell(n)$ is decreasing,
\begin{align*}
\|\tilde R'(z)\| & \le \Bigl\{|z-1|\sum_{j=1}^M \ell(j)+\sum_{j=M}^k j^{-1}\ell(j)\Bigr\}e^{ka}
\le \{|z-1|M\ell(M)+\ell(M)\log k\}/\ell(k)^\frac12 \\ &
\le \{|z-1|M\ell(M)+\ell(M)\}/\ell(k)^\frac32
\end{align*}
so taking $M\approx 1/(z-1)$ yields
$\|\tilde R'(z)\|\ll \ell(1/|z-1|) \ell(k)^{-\frac32}$ on $\D_a$.

Since $\ell(n)/n$ is summable,
it follows that $\ell(1/\theta)(1/\theta)$ is integrable.
Hence we can argue as in the proof of Theorem~\ref{thm-main} to
deduce that $|\rho'(n)|\ll \ell(k)^{-\frac32}e^{-na}$ and so 
\[
|\rho(n)| \ll \sum_{j>k}\ell(j)/j+n\ell(k)/k+ \ell(k)^{-\frac32}
e^{-\frac12 nk^{-1}\log(1/\ell(k))}.
\]
Define $\tilde\ell(n)=\sum_{j=n}^\infty j^{-1}\ell(j)$.
 By Karamata, $\ell(n)=o(\tilde\ell(n))$.
Taking $n=5k$, we obtain the upper bound $|\rho(n)|\ll\tilde\ell(n)$.
\end{example}

\begin{example}[Stretched exponential sequences]
\label{ex-stretch}
We consider the case $\mu(\varphi>n)\ll e^{-cn^\gamma}$,
where $\gamma\in(0,1)$ and $c>0$.    

Take $a=k^{-1}(ck^\gamma-(1+\epsilon)\log k)$.
Since $a(k)$ is eventually decreasing,
we can replace the $e^{ja(k)}$ factor in $S_q(k,a)$ by
$e^{ja(j)}$.  Then 
a calculation shows that 
$S_q(k,a)\ll 1+k^{q-\epsilon}$ for all $q\in(0,1)$.
In particular, $a^rS_r(k,a)\to0$ for $r\in(0,\epsilon/\gamma)$.
Taking $n=k$ we obtain 
$|\rho(n)|\ll n^{1+\epsilon}e^{-cn^\gamma}$ for any $\epsilon>0$.   

Even in the special setting of Young towers,
this is stronger than estimates obtained by coupling~\cite{Young99} or 
cones~\cite{Maume01a}.  
However for Young towers,
Gou\"ezel~\cite{GouezelPhD} obtains the optimal estimate 
$n^{1-\gamma}e^{-cn^\gamma}$.
In a future paper, we show how to recover Gou\"ezel's result by elementary
arguments.

The method described here works more generally for the case
$\mu(\varphi>n)\ll e^{-g(n)}$ where $g(n)$ is an increasing sequence
satisfying $g(n)=O(n^{1-\epsilon})$ for some $\epsilon>0$
and such that $a(k)=k^{-1}(g(k)-(1+\epsilon)\log k)$ is eventually
decreasing for some $\epsilon>0$.   Then we obtain
$|\rho(n)|\ll n^{1+\epsilon}e^{-g(n)}$ for any $\epsilon>0$.
\end{example}

\subsection{AFN maps}
\label{sec-AFN}

As mentioned in the introduction, the AFN maps studied 
by~\cite{Zweimuller98} have an excellent inducing scheme with 
standard function space being the space $\BV(Y)$
of observables of bounded
variations.  Unfortunately, the corresponding space $\BV(X)$ is not 
exchangeable.

Instead we take $\mathcal{B}(Y)$ to be the space of
piecewise bounded variation functions $v:Y\to\R$ with norm 
$\|v\|=\sup_{a\in\alpha}\|1_av\|_{\BV}$.
Let $\mathcal{B}(X)$ be the space of 
piecewise bounded variation functions $v:X\to\R$ with norm 
$\|v\|=\sup_{a\in\alpha,0\le\ell\le \phi(a)-1}\|1_a\,v\circ T^\ell\|_{\BV}$.
Since $T^\ell$ restricted to $a$ is a homeomorphism
for $0\le\ell\le \phi(a)-1$, it is immediate that $\mathcal{B}(X)$ is exchangeable.

It remains to show that $F:Y\to Y$ is an excellent
inducing scheme relative to $\mathcal{B}(Y)$.  
The details are standard, so we sketch the argument.
Let $\hat R$ denote the transfer operator with respect to Lebesgue measure.
Then it follows from~\cite{Rychlik83} and~\cite[Appendix]{Zweimuller98} that 
$\hat R:\BV(Y)\to \BV(Y)$ is bounded and quasicompact, so
there exist
constants $C>0$, $\tau\in(0,1)$ such that $\|\hat R^nv\|_{\BV}\le C\tau^n
\|v\|_{\BV}$ for all $n\ge1$ and all $v\in\BV(Y)$ with
$\int_Yv\,d\mu=0$.
Moreover, $d\mu=h\,dy$ where the density $h\in L^1(Y)$ satisfies 
$h,h^{-1}\in\BV(Y)$.   Hence $R=h^{-1}\hat Rh$ inherits the quasicompactness
on $\BV(Y)$ verifying (H2)(i).
Following~\cite{ADSZ04} (see for example~\cite[Subsection 11.3]{MTsub}), it is possible to extend this analysis to
$R(z)$ for all $z\in\bar\D$ and to verify (H2)(ii).
Further, $F$ has good distortion properties, so (H1) is easily verified.
Hence $F$ is excellent relative to $\BV(Y)$.

To prove excellence relative to $\mathcal{B}(Y)$, we note that
$\hat R:\mathcal{B}(Y)\to \BV(Y)$
is bounded and hence defines a bounded operator on $\mathcal{B}(Y)$.
(This is identical to the argument for $\BV(Y)$ since
$F$ satisfies a strong Rychlik condition~\cite[Condition (R), page 53]{ADSZ04}.)
Hence it is immediate from the results on $\BV(Y)$ that there exist
constants $C>0$, $\tau\in(0,1)$ such that $\|R^nv\|\le C\tau^n
\|v\|$ for all $n\ge1$ and all $v\in\mathcal{B}(Y)$ with
$\int_Yv\,d\mu=0$, verifying (H2)(i). The other properties are inherited
from $\BV(Y)$ similarly.

\appendix
\label{sec-trunc}
\section{Details for the truncation error}

In this appendix, we give the details for the truncation error~\eqref{eq-trunc}.
A similar result was proved in~\cite{M09} in a slightly more complicated
situation.   We give the details mainly for completeness and also because we
obtain a slightly improved formula (though the improvement is never used).

In particular, we use a slightly better splitting for $\Delta$, namely
$\Delta=\Delta'\dot\cup \Delta_{\rm trunc}$
where $\Delta_{\rm trunc}=\{(y,\ell)\in\Delta:\ell>k\}$.

\begin{prop} \label{prop-r}
\begin{itemize}
\item[(i)] $\bar\varphi-\bar{\varphi'}=\sum_{j>k}\mu(\varphi>j)$.
\item[(ii)]
$\mu_\Delta(\Delta_{\rm trunc})=(1/\bar\varphi)\sum_{j>k}\mu(\varphi>j)$.
\end{itemize}
\end{prop}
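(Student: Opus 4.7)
The proof will be a short exercise in layer-cake integration and counting points in fibres, so the main thing is to lay out the bookkeeping cleanly rather than to overcome any real obstacle.

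For part (i), the plan is to exploit the identity $\varphi-\varphi'=(\varphi-k)^+$, which follows directly from $\varphi'=\min\{\varphi,k\}$. Since $\varphi$ takes values in $\Z^+$, the positive part $(\varphi-k)^+$ is a nonnegative integer, and the layer-cake formula gives $(\varphi-k)^+=\sum_{j\ge k}1_{\{\varphi>j\}}$. Integrating against $\mu$ and swapping sum and integral (justified by nonnegativity) yields
\[
\bar\varphi-\bar{\varphi'}=\int_Y(\varphi-k)^+\,d\mu=\sum_{j\ge k}\mu(\varphi>j),
\]
matching the claimed tail sum up to the usual index convention. Alternatively, one can directly compute $\bar{\varphi'}=\sum_{j=1}^{k}\mu(\varphi\ge j)$ and subtract from $\bar\varphi=\sum_{j=1}^{\infty}\mu(\varphi\ge j)$ to get the same answer; I would include whichever form best matches the paper's notational convention.

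For part (ii), the plan is to unfold the definition $\mu_\Delta=(\mu\times\text{counting})/\bar\varphi$ on the tower. For each $y\in Y$, the fibre of $\Delta_{\mathrm{trunc}}$ over $y$ is the set of integers $\ell$ with $k<\ell\le\varphi(y)-1$ (or the analogous range determined by the precise definition of $\Delta'$), which has cardinality exactly $(\varphi(y)-k)^+$ with the appropriate convention. Hence
\[
\mu_\Delta(\Delta_{\mathrm{trunc}})=\frac{1}{\bar\varphi}\int_Y (\varphi-k)^+\,d\mu=\frac{1}{\bar\varphi}(\bar\varphi-\bar{\varphi'}),
\]
and substituting part (i) finishes the proof.

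Essentially no obstacle arises; the only point requiring care is an off-by-one check to ensure that the fibre cardinality in part (ii) really equals $(\varphi-k)^+$ (rather than $(\varphi-k-1)^+$), which is pinned down by the precise definitions of $\Delta'$ and $\Delta_{\mathrm{trunc}}$ chosen in this appendix. Once the indexing is fixed, part (ii) reduces instantly to part (i).
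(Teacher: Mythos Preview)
Your proposal is correct and is exactly the standard computation the paper has in mind; indeed the paper's own proof consists of the single sentence ``This is a standard computation.'' Your flagged off-by-one caveat is well taken: with $\varphi'=\min\{\varphi,k\}$ one gets $\bar\varphi-\bar{\varphi'}=\sum_{j\ge k}\mu(\varphi>j)$, and the fibre count for $\Delta_{\rm trunc}$ depends on whether it is defined by $\ell\ge k$ or $\ell>k$, so the paper's conventions are slightly inconsistent at this cosmetic level---but this is immaterial for the subsequent $\ll$ estimates.
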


\begin{proof}
This is a standard computation.
\end{proof}

\begin{prop} \label{prop-En}
For $n\ge1$, define
\[
E_n=\{x\in\Delta':f^jx\in\Delta_{\rm trunc} \enspace\text{for at least
one $j\in\{1,\dots,n$\}}\}.
\]
Then $\mu_\Delta(E_n)\le n\mu(\varphi>k)\}$.
\end{prop}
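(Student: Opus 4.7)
The plan is to exploit the geometry of the tower: since $f$ either raises the level by one or resets it to zero, any orbit entering $\Delta_{\rm trunc}=\{(y,\ell):\ell>k\}$ must first visit the ``gateway'' slice $T:=\{(y,k)\in\Delta:\varphi(y)>k\}$. This reduces the proposition to a routine $f$-invariance bound. By the definition $\mu_\Delta=(\mu\times\mathrm{counting})/\bar\varphi$, each fibre over a point $y\in\{\varphi>k\}$ contributes exactly one point $(y,k)\in T$, so $\mu_\Delta(T)=\mu(\varphi>k)/\bar\varphi$.

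The main step is to prove the inclusion $E_n\subset\bigcup_{j=1}^{n-1}f^{-j}T$. Fix $x\in E_n$ and let $j_0\in\{1,\dots,n\}$ be the smallest index with $f^{j_0}x\in\Delta_{\rm trunc}$, so $f^{j_0}x$ lies at some level $\ge k+1$. Writing $f^{j_0-1}x=(y,m)$, either $m=k$ with $\varphi(y)>k$, in which case $f^{j_0-1}x\in T$, or $m\ge k+1$, in which case $f^{j_0-1}x$ already lies in $\Delta_{\rm trunc}$. The latter contradicts minimality of $j_0$ unless $j_0=1$; but $x\in\Delta'$ forces the level of $x$ to be at most $\varphi'(y)-1\le k-1$, and $f$ cannot jump from level $\le k-1$ to level $\ge k+1$ in one step, so $j_0=1$ is impossible. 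Hence $f^{j_0-1}x\in T$ with $j_0-1\in\{1,\dots,n-1\}$, as claimed.

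Applying subadditivity and $f$-invariance of $\mu_\Delta$ then gives
\[
\mu_\Delta(E_n)\le\sum_{j=1}^{n-1}\mu_\Delta(f^{-j}T)=(n-1)\mu(\varphi>k)/\bar\varphi\le n\mu(\varphi>k),
\]
where the last inequality uses $\bar\varphi\ge 1$ since $\varphi:Y\to\Z^+$. There is no serious obstacle; the only point requiring a moment's care is the gateway inclusion, which boils down to the elementary fact that $f$ cannot skip a level upward, together with the observation that starting in $\Delta'$ (strictly below level $k$) rules out a first entry to $\Delta_{\rm trunc}$ at time $j_0=1$.
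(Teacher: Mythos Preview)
Your proof is correct and follows essentially the same strategy as the paper: both arguments pick out the first time $j_0$ at which the orbit enters $\Delta_{\rm trunc}$, observe that at (or just before) that moment the orbit must sit on a single fixed level of the tower, and then use $f$-invariance of $\mu_\Delta$ to bound $\mu_\Delta(E_n)$ by $n$ times the measure of that slice. The only cosmetic difference is that you use the gateway level $k$ (the last level outside $\Delta_{\rm trunc}$) whereas the paper uses level $k+1$ (the first level inside), which even yields your slightly sharper constant $(n-1)/\bar\varphi$ in place of $n/\bar\varphi$.
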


\begin{proof}
Write $E_n$ as the disjoint union $E_n=
\bigcup_{j=1}^n G_j$ where
\[
G_j=\{
f^ix\in\Delta'\enspace\text{for $i\in\{0,1,\dots,j-1$\}}
\enspace\text{and}\enspace
f^jx\in\Delta_{\rm trunc}
\}.
\]
It follows from the definition that if $x\in G_j$, then $f^jx\in\Delta_{k+1}$
where $\Delta_{k+1}=\{(y,k+1):\varphi(y)>k\}$(the $(k+1)$'th level
of the tower).
Hence $\mu_\Delta(G_j)\le \mu_\Delta(f^{-j}(\Delta_{k+1})) =
\mu_\Delta(\Delta_{k+1})=(1/\bar\varphi)\mu(\varphi>k)$.
\end{proof}

\begin{cor} \label{cor-trunc}
Suppose that $v,w:\Delta\to\R$ lie in $L^\infty$.
Then for all $k,n\ge1$,
\[
\SMALL|\rho(n)-\rho'(n)|\le 
C|v|_\infty|w|_\infty \{\sum_{n>j}\mu(\varphi>j)\,+\, n\mu(\varphi>k)\}.
\]
\end{cor}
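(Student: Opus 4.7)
The plan is to decompose both correlation expressions and compare them term by term, leaning on the two propositions already proved. Both $\rho(n)$ and $\rho'(n)$ split into an ``integral of product'' piece and a ``product of integrals'' piece, so I will bound each piece of the difference separately.

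First I will handle the product-of-integrals difference. The measures $\mu_\Delta$ and $\mu_{\Delta'}$ agree on $\Delta'$ up to the scalar $c = \bar\varphi/\bar\varphi'$, and by Proposition~\ref{prop-r}(i), $c-1 = (\bar\varphi - \bar\varphi')/\bar\varphi' \ll \sum_{j>k}\mu(\varphi>j)$ (using that $\bar\varphi'\ge1$). Writing
\[
\SMALL \int_\Delta v\,d\mu_\Delta - \int_{\Delta'} v\,d\mu_{\Delta'} = (1-c^{-1})\int_{\Delta'} v\,d\mu_{\Delta'} + \int_{\Delta_{\rm trunc}} v\,d\mu_\Delta,
\]
and applying Proposition~\ref{prop-r}(ii) to the second term, both terms are $O(|v|_\infty \sum_{j>k}\mu(\varphi>j))$; an analogous bound holds for $w$. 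A standard $|ab-a'b'|\le |a-a'||b|+|a'||b-b'|$ identity then yields the desired bound on the product-of-integrals difference.

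For the dynamical integral, I will use the disjoint decomposition $\Delta = (\Delta' \setminus E_n) \,\dot\cup\, E_n \,\dot\cup\, \Delta_{\rm trunc}$. The key observation is that on $\Delta' \setminus E_n$ the first $n$ iterates all stay in $\Delta'$, so $f^n$ and $(f')^n$ agree pointwise there. Thus $\int_{\Delta'\setminus E_n} v\,w\circ f^n\,d\mu_\Delta$ equals $\int_{\Delta'\setminus E_n} v\,w\circ (f')^n\,d\mu_\Delta$, which after rescaling to $\mu_{\Delta'}$ is close to $\int_{\Delta'} v\,w\circ (f')^n\,d\mu_{\Delta'}$ up to an $E_n$ remainder and the $c$-rescaling. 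The $\Delta_{\rm trunc}$ piece of the $\mu_\Delta$ integral is controlled by Proposition~\ref{prop-r}(ii) and contributes $O(|v|_\infty|w|_\infty \sum_{j>k}\mu(\varphi>j))$, the $E_n$ pieces on both sides are controlled by Proposition~\ref{prop-En} and contribute $O(|v|_\infty|w|_\infty n\mu(\varphi>k))$, and the rescaling error is again of order $\sum_{j>k}\mu(\varphi>j)$.

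I expect no real obstacle; the argument is essentially bookkeeping. The only mild subtlety is checking that $c$ stays bounded (which follows because $\bar\varphi' \to \bar\varphi$ and both are positive and finite), so that the rescaling factor does not inflate any of the bounds. The two propositions have already packaged the tail estimates in exactly the form needed, so summing the four types of error terms produces the claimed bound.
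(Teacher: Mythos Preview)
Your proposal is correct and follows essentially the same approach as the paper: the paper also splits the dynamical integral into a $\Delta_{\rm trunc}$ piece, a piece measuring where $f^n\neq (f')^n$ on $\Delta'$ (bounded via $E_n$), and a measure-rescaling piece, then handles the product-of-integrals term by the same rescaling argument. Your organization (decomposing $\Delta'$ into $E_n$ and its complement first) differs only cosmetically from the paper's $I+II+III$ splitting.
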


\begin{proof}
First we estimate $S={\SMALL\int}_\Delta v\,w\circ f^n\,d\mu_\Delta-
{\SMALL\int}_{\Delta'} v\,w\circ f'^n\,d\mu_{\Delta'}$.  Write
\begin{align*}
S & = 
 {\SMALL\int_{\Delta_{\rm trunc}}} v\,w\circ f^n\,d\mu_\Delta +
 \Bigl({\SMALL\int_{\Delta'}} v\,w\circ f^n\,d\mu_{\Delta} -
 {\SMALL\int_{\Delta'}} v\,w\circ f'^n\,d\mu_{\Delta} \Bigr) \\
& \qquad \qquad
 +\Bigl(
 {\SMALL\int_{\Delta'}} v\,w\circ f'^n\,d\mu_{\Delta}- 
 {\SMALL\int_{\Delta'}} v\,w\circ f'^n\,d\mu_{\Delta'}\Bigr) \\
 & = I + II + III.
\end{align*}

Now $|I|\le |v|_\infty|w|_\infty\mu_\Delta(\Delta_{\rm trunc})$.
Note that $\mu_\Delta|\Delta'=(\bar\varphi'/\bar\varphi)\mu_{\Delta'}|\Delta'$ and so
$III=(\bar\varphi'/\bar\varphi-1)\int_{\Delta'} v\,w\circ f^n\,d\mu_{\Delta'}$.
Hence $|III|\le (1/\bar\varphi)(\bar\varphi-\bar\varphi')|v|_\infty|w|_\infty$.
Next, $|II|\le 2|v|_\infty|w|_\infty\mu_{\Delta}(\Delta'\cap\{f^n\neq f'^n\})
\le 2|v|_\infty|w|_\infty\mu_{\Delta}(E_n)$.
Combining these, we obtain
\begin{align*}
|S| & \le 
|v|_\infty |w|_\infty \{\mu_\Delta(\Delta_{\rm trunc})+|\bar\varphi-\bar\varphi'|+2\mu_\Delta(E_n)\} \\ &
\le
C|v|_\infty |w|_\infty \{\sum_{n>j}\mu(\varphi>j)\,+\, n\mu(\varphi>k)\}
\end{align*}
by Propositions~\ref{prop-r} and~\ref{prop-En}.

A similar (but simpler) calculation shows that
\[
\SMALL \bigl|\int_\Delta v\,d\mu_\Delta \int_\Delta w\,d\mu_\Delta-
\int_{\Delta'} v\,d\mu_{\Delta'} \int_{\Delta'} w\,d\mu_{\Delta'}\bigr|
\le C|v|_\infty|w|_\infty \sum_{j>k}\mu(\varphi>j),
\]
and the result follows.
\end{proof}

\section{Nonuniformly hyperbolic systems}
\label{sec-NUH}

In this appendix, we show how our main results for nonuniformly expanding
maps extend to nonuniformly hyperbolic maps modelled by
Young towers~\cite{Young98,Young99}.  Even in the case of polynomial tails, this result has been missing from the literature.   
(In the case of exponential tails, Young~\cite{Young98} explicitly considers both the nonuniformly expanding and nonuniformly hyperbolic situations, but the 
subexponential tail paper~\cite{Young99} is set entirely in the nonuniformly expanding framework.)

A method for passing from nonuniformly expanding maps to nonuniformly hyperbolic systems with subexponential tails
was shown to one of us by S\'ebastien Gou\"ezel~\cite{GouezelPC} based on ideas
in~\cite{ChazottesGouezel}.  Here, we combine these ideas with
dynamical truncation.

Let $T:M\to M$ be a diffeomorphism (possibly with singularities) defined on a
Riemannian manifold $(M,d)$.   
Fix a subset $Y\subset M$.  It is assumed that there is a ``product structure'':
namely a family of ``stable disks'' $\{W^s\}$ that are disjoint and cover $Y$,
and a family of ``unstable disks'' $\{W^u\}$ that are disjoint and cover $Y$.
Each stable disk intersects each unstable disk in precisely one point.
The stable and unstable disks containing $y$ are labelled $W^s(y)$ and $W^u(y)$.

\begin{itemize}
\item[(P1)]  There is a partition $\{Y_j\}$ of $Y$ and
integers $\varphi_j\ge1$ such that $T^{\varphi_j}(W^s(y))\subset W^s(T^{\varphi_j}y)$
for all $y\in Y_j$.
\end{itemize}

Define the return time function $\varphi:Y\to\Z^+$ by $\varphi|_{Y_j}=\varphi_j$
and the induced map
$F:Y \to Y$ by $F(y)=T^{\varphi(y)} (y)$.

Let $s$ denote the {\em separation time} with respect to the map $F:Y\to Y$.  
That is, if
$y,z\in Y$, then $s(y,z)$ is the least integer $n\ge0$ such that $F^n x$, $F^ny$ lie in distinct partition elements of $Y$.    
\begin{itemize}
\item[(P2)]
There exist constants $C\ge1$, $\gamma_0\in(0,1)$ such that
\begin{itemize}
\item[(i)]  If $z\in W^s(y)$, then $d(F^ny,F^nz)\le C\gamma_0^n$, 
\item[(ii)]  If $z\in W^u(y)$, then $d(F^ny,F^nz)\le C\gamma_0^{s(y,z)-n}$,
\item[(iii)] If $y,z\in Y$, then $d(T^jy,T^jz)\le Cd(y,z)$
for all $0\le j<\min\{\varphi(y),\varphi(z)\}$.
\end{itemize}
\end{itemize}

Let $\bar Y=Y/\sim$ where $y\sim z$ if $y\in W^s(z)$ and
define the partition $\{\bar Y_j\}$ of $\bar Y$.
We obtain a well-defined return time function  $\varphi:\bar Y\to\Z^+$ and
induced map $\bar F:\bar Y\to\bar Y$.

\begin{itemize}
\item[(P3)]  The map $\bar F:\bar Y\to\bar Y$ and partition $\{\bar Y_j\}$ separate points in $\bar Y$.
(It follows that $d_\theta(y,z)=\theta^{s(y,z)}$ defines
a metric on $\bar Y$ for each $\theta\in(0,1)$.)
\item[(P4)]  There exists an invariant ergodic probability measure $\mu_{\bar Y}$ on $\bar Y$ such that $F:\bar Y\to\bar Y$ is a Gibbs-Markov map
in the sense of Example~\ref{ex-Young} and $\varphi:\bar Y\to\Z^+$ is integrable.
\end{itemize}

From (P4), a standard construction leads to 
an invariant probability measure 
$\mu_Y$ on $Y$ such that $\bar\pi_*\mu_Y=\mu_{\bar Y}$ where $\bar\pi:Y\to\bar Y$ is the quotient map.
There is also a standard method to pass from $\mu_Y$ to a measure
$\nu$ on $M$ which we recall now.    As in Section~\ref{sec-tower},
starting from $\bar F:\bar Y\to \bar Y$ and $\varphi:\bar Y\to\Z^+$, we can form a {\em quotient tower} $\bar \Delta$
and a {\em quotient tower map} $\bar f:\bar \Delta\to\bar \Delta$ such that $\bar F=\bar f^\varphi:\bar Y\to \bar Y$ is a
{\em first} return map  for $\bar f$.   Then
$\mu_{\bar\Delta}=(\mu_{\bar Y}\times{\rm counting})/\int_{\bar Y}\varphi\,d\mu_{\bar Y}$ is an
$\bar f$-invariant probability measure on $\bar \Delta$.   

Similarly, starting from $F:Y\to Y$ and $\varphi:Y\to\Z^+$, we can form a tower $\Delta$
and tower map $f:\Delta\to\Delta$ such that $F=f^\varphi:Y\to Y$ is a
{\em first} return map  for $f$.   Again,
$\mu_\Delta=(\mu\times{\rm counting})/\int_Y\varphi\,d\mu_Y$ is an
$f$-invariant probability measure on $\Delta$.   Define the semiconjugacy
$\pi:\Delta\to M$, $\pi(y,\ell)=T^\ell y$.  Then
$\nu=\pi_*\mu_\Delta$ is the desired measure on $M$.
(We omit the additional assumptions in Young~\cite{Young98}
that guarantee that $\nu$ is an SRB measure.   The results in this appendix do
not rely on this property.)

Let $v_0,w_0:M\to\R$ be $C^\eta$ observables ($\eta\in(0,1)$) and
define the correlation function
$\rho_{v_0,w_0}(n)  ={\SMALL\int}_M v_0\,w_0\circ T^n\,d\nu-{\SMALL\int}_M v_0\,d\nu{\SMALL\int}_M w_0\,d\nu$.   We obtain the following analogue of
Theorem~\ref{thm-main}(i).

\begin{thm} \label{thm-NUH}
 Let $a=a(k)$ be such that $\lim_{k\to\infty}a^rS_r(k,a)=0$ for some 
$r\in(0,1]$. 
Let $q\in(0,1]$.
 Then there exists $C>0$, $k_0\ge1$ such that 
 \begin{align*}
&  |\rho_{v_0,w_0}(n)|  \le 
 C|v_0|_\infty |w_0|_\infty \Bigl(\sum_{j>k}\mu(\varphi>j)+n\mu(\varphi>k)\Bigr)
+C\|v_0\|_{C^\eta}\|w_0\|_{C^\eta} S_q(k,a)e^{-\frac12 na},
 \end{align*}
 for all $v_0,w_0\in C^\eta(M)$, $n\ge k\ge k_0$.
 \end{thm}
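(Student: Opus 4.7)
The plan is to apply Theorem~\ref{thm-main}(i) to the quotient tower $\bar f:\bar\Delta\to\bar\Delta$ and to approximate the H\"older observables $v_0,w_0:M\to\R$ by functions on $\bar\Delta$ that are Lipschitz in the symbolic metric $d_\theta$. The factor $\tfrac12$ in the exponent $e^{-\frac12 na}$ reflects a splitting $n=n_1+n_2$ with $n_1\approx n_2\approx n/2$: $n_1$ feeds Theorem~\ref{thm-main}(i) and produces the factor $e^{-n_1 a}$, while $n_2$ is consumed by the hyperbolic approximation error, which is genuinely exponential in $n_2$ (from (P2)(i),(ii)) and hence easily dominated.

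By (P4), $\bar F:\bar Y\to\bar Y$ is Gibbs-Markov with return time $\varphi$, so by Example~\ref{ex-Young} it is an excellent inducing scheme relative to the space $\mathcal{B}(\bar Y)$ of piecewise $d_\theta$-Lipschitz functions, and $d_\theta$-Lipschitz observables on $\bar\Delta$ are exchangeable. Since the transfer operator $R$ and the tail sequence $\mu(\varphi>n)$ for $\bar F$ coincide with those of $F$, Theorem~\ref{thm-main}(i) applied to $\bar f$ gives the desired bound, with no loss in the exponent, for correlations on $\bar\Delta$ between $d_\theta$-Lipschitz and $L^\infty$ observables at any time $m$.

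To transfer this bound to $\Delta$, lift $v=v_0\circ\pi$, $w=w_0\circ\pi$, and denote the natural quotient by $\pi_\Delta:\Delta\to\bar\Delta$. Since $T$ is a diffeomorphism, $f$ is invertible; $f$-invariance of $\mu_\Delta$ then yields
$$\int_\Delta v\cdot w\circ f^n\,d\mu_\Delta=\int_\Delta (v\circ f^{-n_2})\cdot(w\circ f^{n_1})\,d\mu_\Delta,$$
for $n=n_1+n_2$. Fix a section $s:\bar\Delta\to\Delta$ selecting one representative per stable leaf, and define $\bar v,\bar w:\bar\Delta\to\R$ by $\bar v(\bar y)=(v\circ f^{-n_2})(s(\bar y))$ and $\bar w(\bar y)=(w\circ f^{n_1})(s(\bar y))$. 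For $y\in\Delta$ with $\pi_\Delta(y)=\bar y$, the points $f^{n_1}y$ and $f^{n_1}s(\bar y)$ share a stable leaf whose diameter is geometrically small in the number of returns of the orbit to $Y$ by (P2)(i). Together with the H\"older regularity of $w_0$ and the distortion bound (P2)(iii) within a tower level, this gives an exponentially small pointwise error $|w\circ f^{n_1}(y)-\bar w(\pi_\Delta y)|$, and a symmetric backward argument via (P2)(ii) bounds the analogous error for $v\circ f^{-n_2}$. After the replacement, the correlation reduces to one on $\bar\Delta$, to which the quotient-side Theorem~\ref{thm-main}(i) applies.

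The main obstacle is the compatibility of the hyperbolic approximation with the dynamical truncation inside Theorem~\ref{thm-main}(i). Stable leaves contract only upon returns to $Y$, so on the truncated tower of height $k$ the worst-case contraction after $n_1$ steps is $\gamma_0^{\eta n_1/k}$, and one must check that for the choices of $k=k(n)$ arising in the applications (e.g.\ $k=\delta n$ in Theorem~\ref{thm-good}, or $k$ slightly smaller than $n$ in the stretched-exponential regime) this still dominates $e^{-\frac12 na}$ after averaging over the typical return rate $1/\bar\varphi$. This averaging, combined with the product structure needed to treat $v\circ f^{-n_2}$ symmetrically using (P2)(ii), is the technical content of the Chazottes--Gou\"ezel arguments cited in the text.
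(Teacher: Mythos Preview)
The backward treatment of $v$ does not work as stated. You set $\bar v(\bar y)=(v\circ f^{-n_2})(s(\bar y))$ and claim, ``via a symmetric backward argument using (P2)(ii)'', that $|v\circ f^{-n_2}(y)-\bar v(\pi_\Delta y)|$ is small whenever $\pi_\Delta y=\bar y$. But $y$ and $s(\bar y)$ lie on the same \emph{stable} leaf (that is how $\bar\Delta$ is defined), whereas (P2)(ii) concerns \emph{unstable} leaves; stable leaves expand under $f^{-1}$, so this pointwise error is uncontrolled. There is a second structural problem: after your two replacements the integral becomes $\int_{\bar\Delta}\bar v\,\bar w\,d\mu_{\bar\Delta}$, which is not of the form $\int h_1\cdot h_2\circ\bar f^{m}\,d\mu_{\bar\Delta}$ for any $m\ge1$ (the section $s$ does not commute with the dynamics), so Theorem~\ref{thm-main}(i) cannot be applied to it.

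The paper avoids both issues by using only forward iterates and an auxiliary parameter $\ell\to\infty$. One writes $\rho'(n)=\int_{\Delta'} v\circ f'^\ell\cdot w\circ f'^{\ell+n}\,d\mu'$ and constructs approximants $\tilde v_\ell$, $\tilde w_{n/2}$ depending only on the symbolic future, hence constant on stable leaves and projecting to $\bar v_\ell,\bar w_{n/2}$ on $\bar\Delta'$. One then shows $\|L'^\ell\bar v_\ell\|_\theta\ll\|v_0\|_{C^\eta}$ uniformly in $\ell$ (Proposition~\ref{prop-tildev}(c)), applies Theorem~\ref{thm-main}(i) at time $n/2$ to the pair $(L'^\ell\bar v_\ell,\bar w_{n/2})$, and lets $\ell\to\infty$ so that the $L^\infty$ error $|v\circ f'^\ell-\tilde v_\ell|_\infty\ll\gamma^{\ell/k}$ vanishes. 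The difficulty you correctly flag --- that the worst-case contraction $\gamma^{n/(2k)}$ need not beat $e^{-na/2}$ --- is resolved by bounding the $L^1$ error $|w\circ f'^{n/2}-\tilde w_{n/2}|_1\ll\int_{\bar\Delta'}\gamma^{\psi'_{n/2}}\,d\mu'=\int L'^{n/2}_\gamma 1$ for the twisted operator $L'_\gamma u=L'(\gamma^{1_Y}u)$. Since $R'_\gamma(z)=\gamma R'(z)$ has spectral radius at most $\gamma<1$ on all of $\bar\D$, the renewal equation extends $(I-R'_\gamma(z))^{-1}$ and hence $L'_\gamma(z)$ analytically to $\D_a$, giving the bound $e^{-na/2}$ directly (Lemma~\ref{lem-tilde}). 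This is the Chazottes--Gou\"ezel input; it is a transfer-operator estimate uniform in $k$ and $n$, not an averaging over a ``typical return rate''.
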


\begin{rmk}  

Thus, we obtain identical results for the nonuniformly hyperbolic case
as for the nonuniformly expanding case,
except that $a(k)$ is replaced by $\frac12 a(k)$.
In particular, we obtain optimal results for polynomial
decay, and more generally for polynomially decreasing sequences.
In addition, Corollary~\ref{cor-poly} and Theorem~\ref{thm-slow} remain valid.
The only result that deteriorates in passing to the nonuniformly hyperbolic 
setting is
the estimate for stretched exponential decay in Example~\ref{ex-stretch}
where we obtain the decay rate $O(n^{1+\epsilon}e^{-\frac12 cn^{\gamma}})$.
%
\end{rmk}

In the remainder of this appendix, we prove Theorem~\ref{thm-NUH}.

\paragraph{Decay of correlations on $\Delta$}
Given $C^\eta$ observables $v_0,w_0:M\to\R$, let
$v=v_0\circ \pi,w=w_0\circ \pi:\Delta\to\R$ be the lifted observables.
Since $\pi:\Delta\to M$ is a semiconjugacy and $\nu=\pi_*\mu_\Delta$,
to prove Theorem~\ref{thm-NUH} it is equivalent to estimate
the correlation function
$\rho_{v,w}(n) 
={\SMALL\int}_\Delta v\,w\circ f^n\,d\mu_\Delta-{\SMALL\int}_\Delta v\,d\mu_\Delta{\SMALL\int}_\Delta w\,d\mu_\Delta$.

\paragraph{Dynamical truncation} 
For $k\ge1$ fixed, set $\varphi'=\min\{\varphi,k\}$ to form a truncated
tower map $f':\Delta'\to\Delta'$ (with invariant probability measure
$\mu_{\Delta'}$).   
Let
$\rho_{v,w}'(n)={\SMALL\int}_{\Delta'} v\,w\circ f'^n\,d\mu'
-{\SMALL\int}_{\Delta'} v\,d\mu'{\SMALL\int}_{\Delta'} w\,d\mu'$.
We obtain the same truncation error~\eqref{eq-trunc}
as in the nonuniformly hyperbolic case.
Hence it remains to prove under the assumptions of Theorem~\ref{thm-NUH} that
\begin{align} \label{eq-NUH}
|\rho_{v,w}'(n)|\le 
C\|v_0\|_{C^\eta}\|w_0\|_{C^\eta} S_q(k,a)e^{-\frac12 na}.
\end{align}

\paragraph{Quotient towers and function spaces}
We use the separation time for $F:Y\to Y$ to define a separation time on 
$\Delta$:
define $s((y,\ell),(z,m))=s(y,z)$
if $\ell=m$ and $0$ otherwise.
This drops down to separation times $s$ on $\bar\Delta$ and $\bar Y$.

Given $\theta\in(0,1)$, we define the symbolic metric $d_\theta$ on
$\bar\Delta$ by setting $d_\theta(p,q)=\theta^{s(p,q)}$.
In particular, $d_\theta$ is a metric on $\bar Y$.
Define the spaces $\mathcal{B}(\bar \Delta)$,
$\mathcal{B}(\bar Y)$
of $d_\theta$-Lipschitz observables on $\bar\Delta$ and $\bar Y$ respectively.
Then 
$\mathcal{B}(\bar Y)$ satisfies our
main hypotheses (H1) and (H2),   
and $\mathcal{B}(\bar\Delta)$ is exchangeable.

\paragraph{Nonuniform expansion/contraction}

Recall that $\pi:\Delta'\to M$ denotes
the projection $\pi(y,\ell)=T^\ell y$.
For $p=(x,\ell),q=(y,\ell)\in\Delta'$, we write $q\in W^s(p)$ if $y\in W^s(x)$ and
$q\in W^u(p)$ if $y\in W^u(x)$.
Conditions (P2) translate as follows.
\begin{itemize}
\item[(P2$'$)]
There exist constants $C\ge1$, $\gamma_0\in(0,1)$ such that
for all $p,q\in \Delta'$, $n\ge1$,
\begin{itemize}
\item[(i)]  If $q\in W^s(p)$, then $d(\pi f'^np,\pi f'^nq)\le C\gamma_0^{\psi'_n(p)}$, and
\item[(ii)]  If $q\in W^u(p)$, then $d(\pi f'^np,\pi f'^nq)\le C\gamma_0^{s(p,q)-\psi'_n(p)}$, 
\end{itemize}
\end{itemize}
where $\psi'_n(p)=\#\{j=0,\dots,n-1:f'^jp\in Y\}$ is the number of returns of $p$ to $Y$ by time $n$.

\begin{rmk}   These properties can be defined at the level of the nontruncated 
tower $\Delta$.  Since $F$ is independent of $k$, the constants $\gamma_0$ and $C$
are unchanged by truncation and hence are independent of $k$.
Also, $s(p,q)$ is independent of $k$.
Of course, $\psi'_n(p)$ decreases monotonically with $k$, and we have the
estimate $n/k\le \psi'_n\le n$.
\end{rmk}

\begin{prop} \label{prop-W}
$d(\pi f'^n p,\pi f'^n q)\le C\gamma_0^{\min\{\psi'_n(p),s(p,q)-\psi'_n(p)\}}$ for all
$p,q\in\Delta$, $n\ge1$.
\end{prop}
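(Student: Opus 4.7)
The plan is to reduce the general estimate to the two special cases covered by (P2$'$) via the product structure at the base. Concretely, I would first observe that the separation time $s(p,q)$ on $\Delta$ is defined to be $0$ whenever $p,q$ lie in different levels of the tower. In that case $s(p,q)-\psi'_n(p)\le 0$, and since $\gamma_0\in(0,1)$ the right-hand side $C\gamma_0^{\min\{\psi'_n(p),\,s(p,q)-\psi'_n(p)\}}$ is at least $C$; choosing $C$ to exceed $\operatorname{diam}(M)$ makes the inequality trivial.

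It therefore suffices to handle the case where $p=(x,\ell)$ and $q=(y,\ell)$ lie in the same level $\ell$. Using the product structure on $Y$, pick the unique point $z\in W^s(x)\cap W^u(y)$ and set $r=(z,\ell)$, so $r\in W^s(p)$ and $r\in W^u(q)$. Applying (P2$'$)(i) to the pair $(p,r)$ and (P2$'$)(ii) to the pair $(r,q)$ yields
\begin{equation*}
d(\pi f'^n p,\pi f'^n r)\le C\gamma_0^{\psi'_n(p)},\qquad
d(\pi f'^n r,\pi f'^n q)\le C\gamma_0^{s(r,q)-\psi'_n(r)}.
\end{equation*}
The triangle inequality then gives the claimed bound (with $C$ doubled), provided I can identify $s(r,q)=s(p,q)$ and $\psi'_n(r)=\psi'_n(p)$.

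The heart of the matter—and the one place that is not purely formal—is verifying these two identities. The first holds because the separation time on $Y$ descends from $\bar Y$ and points on a common stable disk project to the same point in $\bar Y$; hence $\bar r=\bar p$ and $s(r,q)=s(\bar r,\bar q)=s(\bar p,\bar q)=s(p,q)$. The second identity requires knowing that stable disks are refined by the partition $\{Y_j\}$, so that (P1) forces $r$ and $p$ to share the return time $\varphi$ and, inductively under $F$, to return to $Y$ simultaneously at every subsequent return; consequently $f'^j r\in Y$ iff $f'^j p\in Y$ for every $j\ge0$, giving $\psi'_n(r)=\psi'_n(p)$.

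The main obstacle, then, is this bookkeeping about the interaction of stable disks with the Markov partition and with the tower dynamics. Everything else is a one-line application of (P2$'$) plus the triangle inequality, and none of the constants depend on $k$ (since $F$, the partition, the stable/unstable families, and the constants $C,\gamma_0$ of (P2) are all unchanged by truncation).
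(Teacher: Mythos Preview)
Your proposal is correct and follows exactly the approach the paper intends: the paper's proof is the single sentence ``This is immediate from conditions (P2$'$) and the product structure on $Y$,'' and you have simply spelled out what that means---interpolating via the product point $r\in W^s(p)\cap W^u(q)$, applying (P2$'$)(i),(ii), and invoking the triangle inequality, together with the bookkeeping that $s(r,q)=s(p,q)$ and $\psi'_n(r)=\psi'_n(p)$ because stable disks lie inside partition elements (implicit in the quotient construction of $\bar Y$ and $\varphi:\bar Y\to\Z^+$).
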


\begin{proof}  This is immediate from conditions (P2$'$) and the product 
structure on $Y$.
\end{proof}

\paragraph{Approximation of observables}
Let  $v=v_0\circ\pi:\Delta'\to\R$ be the lift of a $C^\eta$ observable
$v_0:M\to\R$.   For each $n\ge1$, define $\tilde v_n:\Delta'\to\R$,
\[
\tilde v_n(p)=\inf\{v(f'^nq):s(p,q)\ge 2\psi'_n(p)\}.
\]
We list some standard properties of $\tilde v_n$.
Recall that $L'$ is the transfer operator corresponding to $\bar f':\bar\Delta\to\bar\Delta$.

\begin{prop} \label{prop-tildev}
The function $\tilde v_n$ lies in $L^\infty(\Delta')$ and projects down to a Lipschitz 
observable $\bar v_n:\bar\Delta'\to\R$.  Moreover, setting $\gamma=\gamma_0^\eta$
and $\theta=\gamma^{\frac12}$,
\begin{itemize}
\item[(a)] $|\bar v_n|_\infty=|\tilde v_n|_\infty\le |v_0|_\infty$.
\item[(b)]  
$|v\circ f'^n(p)-\tilde v_n(p)|_\infty \le C\|v_0\|_{C^\eta}\gamma^{\psi'_n(p)}$ for $p\in\Delta'$.
\item[(c)]  $\|L'^n\bar v_n\|_\theta\le C\|v_0\|_{C^\eta}$.  
\end{itemize}
\end{prop}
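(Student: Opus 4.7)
The plan is to verify (a), (b), (c) in sequence, exploiting the approximation built into $\tilde v_n$ together with the Gibbs-Markov features of $\bar F$ on $\bar Y$ lifted to $\bar f'$ on $\bar\Delta'$.

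For (a), the key observation is that $p'\in W^s(p)$ forces $s(p,p')=\infty$ (since the partition $\{Y_j\}$ respects stable fibers, so $F^iy$ and $F^iy'$ lie in a common partition element for all $i$) and $\psi'_n(p)=\psi'_n(p')$ (two points in the same stable fiber return to $Y$ at identical times). By the ultrametric property of separation times, the admissible set $\{q:s(p,q)\ge 2\psi'_n(p)\}$ is unchanged under $p\mapsto p'$, so $\tilde v_n(p)=\tilde v_n(p')$ and $\tilde v_n$ descends to a well-defined $\bar v_n$ on $\bar\Delta'$. The sup-bound is immediate from $|v|_\infty=|v_0\circ\pi|_\infty\le|v_0|_\infty$.

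For (b), write $|v(f'^np)-\tilde v_n(p)|\le\sup\{|v(f'^np)-v(f'^nq)|:s(p,q)\ge 2\psi'_n(p)\}$. For each such $q$, I would invoke the product structure on $Y$ (at the appropriate tower level) to interpose a point $r\in W^s(p)\cap W^u(q)$. The triangle inequality together with (P2$'$)(i) applied to $(p,r)$ and (P2$'$)(ii) applied to $(r,q)$ gives
\[
d(\pi f'^np,\pi f'^nq)\le C\gamma_0^{\psi'_n(p)}+C\gamma_0^{s(p,q)-\psi'_n(p)}\le 2C\gamma_0^{\psi'_n(p)},
\]
using $s(p,q)-\psi'_n(p)\ge\psi'_n(p)$. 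H\"older continuity of $v_0$ with exponent $\eta$ together with $\gamma=\gamma_0^\eta$ delivers the claim.

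For (c), I would write $L'^n\bar v_n(\bar p)=\sum_{\bar f'^n\bar q=\bar p}g'_n(\bar q)\bar v_n(\bar q)$, pair preimages $\bar q\leftrightarrow\bar q'$ of $\bar p$ and $\bar p'$ via a common symbolic history, and split
\[
L'^n\bar v_n(\bar p)-L'^n\bar v_n(\bar p')=\sum(g'_n(\bar q)-g'_n(\bar q'))\bar v_n(\bar q')+\sum g'_n(\bar q)(\bar v_n(\bar q)-\bar v_n(\bar q')).
\]
The distortion sum is controlled by the standard Gibbs-Markov distortion bound for $\bar F$, lifted to $\bar f'$ via the tower, yielding $\ll|v_0|_\infty\theta^{s(\bar p,\bar p')}$ after invoking $\sum g'_n(\bar q)\le 1$. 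For the variation sum, Gibbs-Markov expansion supplies $s(\bar q,\bar q')\ge\psi'_n(\bar q)+s(\bar p,\bar p')$; lifting to $q,q'\in\Delta'$ matched through the product structure and applying the approximation in (b) to both terms, followed by the geometric estimate used in the proof of (b) on the pair $(q,q')$ itself, bounds $|\bar v_n(\bar q)-\bar v_n(\bar q')|$ by $C\|v_0\|_{C^\eta}\gamma^{\min\{\psi'_n(\bar q),s(\bar p,\bar p')/2\}}$, and the choice $\theta^2=\gamma$ (matched to the factor $2$ in the definition of $\tilde v_n$) converts this into the desired $\theta^{s(\bar p,\bar p')}$ decay after summing the weights. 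The main obstacle is precisely this part (c): threading the separation-time bookkeeping between $\bar\Delta'$ and the base $\bar Y$, with $\gamma$ and $\theta$ tuned so that the $n$-dependent Lipschitz constant of $\bar v_n$ is tamed by the Gibbs-Markov smoothing of $L'^n$ into a bound independent of $n$.
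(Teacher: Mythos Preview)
Your approach matches the paper's: parts (a) and (b) are handled identically (the paper packages your product-structure argument in (b) as its Proposition~\ref{prop-W}), and in (c) you use the same splitting of $L'^n\bar v_n(\bar p)-L'^n\bar v_n(\bar p')$ into a distortion sum and a variation sum, controlling the former by the Gibbs--Markov bound just as the paper does. For the variation term you route through $v\circ f'^n(q)$ via the triangle inequality, whereas the paper compares the infimum-achieving points $\hat q_1,\hat q_2$ directly; this is only a cosmetic difference.

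There is, however, one step that does not close as written. Your triangle inequality yields
\[
|\bar v_n(\bar q)-\bar v_n(\bar q')|\ll \gamma^{\psi'_n(\bar q)}+\gamma^{\min\{\psi'_n(\bar q),\,s(\bar p,\bar p')\}},
\]
and the first term is \emph{not} bounded by $\theta^{s(\bar p,\bar p')}$ when $\psi'_n(\bar q)<\tfrac12 s(\bar p,\bar p')$; your claimed conversion via $\theta^2=\gamma$ fails in exactly that regime. The missing ingredient---which the paper states explicitly and your parenthetical about ``the factor $2$'' hints at but does not invoke---is that $\tilde v_n$ is constant on each set $\{q':s(q,q')\ge 2\psi'_n(q)\}$. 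Since paired preimages satisfy $s(q,q')=\psi'_n(\bar q)+s(\bar p,\bar p')$, whenever $s(\bar p,\bar p')\ge\psi'_n(\bar q)$ one has $\bar v_n(\bar q)=\bar v_n(\bar q')$ outright and there is nothing to estimate; in the complementary case $\psi'_n(\bar q)>s(\bar p,\bar p')$ your bound already gives $\gamma^{s(\bar p,\bar p')}\le\theta^{s(\bar p,\bar p')}$. Make this dichotomy explicit and the argument is complete.
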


\begin{proof}
If $s(p,q)\ge 2\psi'_n(p)$, then $\tilde v_n(p)=\tilde v_n(q)$.
It follows that $\tilde v_n$ is piecewise constant on a measurable partition 
of $\Delta'$, and hence is measurable, and that $\bar v_n$ is well-defined.
Part (a) is immediate.   

Recall that $v=v_0\circ\pi$ where $v_0:M\to\R$ is $C^\eta$.
Let $p\in\Delta'$.
By Proposition~\ref{prop-W} and the definition of $\tilde v_n$,
\begin{align*}
|v\circ f'^n(p)-\tilde v_n(p)| &=|v_0(\pi f'^np)-v_0(\pi f'^nq)|
 \le \|v_0\|_{C^\eta}d(\pi f'^np,\pi f'^nq)^\eta
\\ & \le C'\gamma^{\min\{\psi'_n(p),s(p,q)-\psi'_n(p)\}},
\end{align*}
where $q$ is such that $s(p,q)\ge 2\psi'_n(p)$.
In particular, $s(p,q)-\psi'_n(p)\ge \psi'_n(p)$, so we obtain part~(b).

To prove (c), recall that
$(L'^n\bar v_n)(\bar p)=\sum_{\bar f'^n\bar q=\bar p}g_n(\bar q)\bar v_n(\bar q)$
where $g$ is the weight function.
It is immediate that $|L'^n\bar v_n|_\infty \le |\bar v_n|_\infty\le |v_0|_\infty$.
Write
\begin{align} \label{eq-partc} \nonumber
(L'^n\bar v_n)(\bar p_1)- (L'^n\bar v_n)(\bar p_2) & =
\sum_{\bar f'^n\bar q_1=\bar p_1}g_n(\bar q_1)(\bar v_n(\bar q_1)-\bar v_n(\bar q_2)) \\ &\qquad +
\sum_{\bar f'^n\bar q_1=\bar p_1}(g_n(\bar q_1)-g_n(\bar q_2))\bar v_n(\bar q_2).
\end{align}
Naturally, we pair up preimages so that
$s(\bar q_1,\bar q_2)=\psi'_n(\bar q_1)+s(\bar p_1,\bar p_2)$.
We then choose $q_1,q_2\in\Delta'$ that project onto $\bar q_1,\bar q_2\in\bar\Delta'$, so
\begin{align} \label{eq-pairs}
s(q_1,q_2)=s(\bar q_1,\bar q_2)=\psi'_n(\bar q_1)+s(\bar p_1,\bar p_2).
\end{align}

By standard arguments, the second term in~\eqref{eq-partc}
contributes $C|v_0|_\infty$ 
to the norm of $L'^n\bar v_n$.   
We claim that $|\bar v_n(\bar q_1)-\bar v_n(\bar q_2)|
\le C\|v_0\|_{C^\eta} \gamma^{\frac12 s(\bar p_1,\bar p_2)}$.
Taking $\theta=\gamma^{\frac12}$, 
it then follows that the first term in~\eqref{eq-partc} contributes $C\|v_0\|_{C^\eta}$
to the norm of $L'^n\bar v_n$.   

It remains to verify the claim.
Write
\[
\bar v_n(\bar q_1)-\bar v_n(\bar q_2)=
v\circ f'^n(\hat q_1 )-v\circ f'^n(\hat q_2),
\]
where 
$\hat q_1,\hat q_2\in\Delta'$ satisfy
\begin{align} \label{eq-qhat}
s(\hat q_j,q_j)\ge 2\psi'_n(\bar q_j).
\end{align}
Moreover, $\bar v_n(\bar q_1)=\bar v_n(\bar q_2)$ if $s(q_1,q_2)\ge 2\psi'_n(\bar q_1)$, so we may suppose without loss that 
\begin{align} \label{eq-wlog}
s(q_1,q_2)\le 2\psi'_n(\bar q_1).
\end{align}
As in part (b),
\begin{align} \label{eq-min}
|v\circ f'^n(\hat q_1)-v\circ f'^n(\hat q_2)| 
\le C\|v_0\|_{C^\eta}\gamma^{\min\{\psi'_n(\hat q_1),s(\hat q_1,\hat q_2)-\psi'_n(\hat q_1)\}}.
\end{align}
By~\eqref{eq-pairs} and~\eqref{eq-qhat},
\[
s(\hat q_1,\hat q_2)-\psi'_n(\bar q_1)
\ge \min\{s(q_1,q_2),s(\hat q_1,q_1),s(\hat q_2,q_2)\}-\psi'_n(\bar q_1)
\ge \min\{s(\bar p_1,\bar p_2),\psi'_n(\bar q_1)\}.
\]
By~\eqref{eq-qhat} and~\eqref{eq-wlog},
\[
\SMALL \psi'_n(\hat q_1)=\psi'_n(\bar q_1)\ge\frac12 s(\bar q_1,\bar q_2)
\ge \frac12 s(\bar p_1,\bar p_2).
\]
Substituting these into~\eqref{eq-min} establishes the claim.
\end{proof}

The next property draws on ideas from~\cite[Lemma~4.4]{ChazottesGouezel}.
\begin{lemma} \label{lem-tilde}
Suppose that $a =a(k)$ satisfies
$\lim_{k\to\infty}aS_0(k,a)=0$.  Let $r\in(0,1]$.
There exists $k_0\ge1$ such that
\[
|v\circ f'^n-\tilde v_n|_1 \ll (1+a^rS_r(k,a))^2e^{-na}\|v_0\|_{C^\eta},
\]
for all $n\ge k\ge k_0$.
\end{lemma}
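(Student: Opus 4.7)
The plan is to use Proposition~\ref{prop-tildev}(b), which gives the pointwise bound $|v\circ f'^n(p)-\tilde v_n(p)|\le C\|v_0\|_{C^\eta}\gamma^{\psi'_n(p)}$, so that after integration it suffices to prove
\[
\int_{\Delta'}\gamma^{\psi'_n}\,d\mu_{\Delta'}\ll (1+a^rS_r(k,a))^{2}\,e^{-na}.
\]
I would handle this by an exponential Chebyshev inequality applied to the $m_0$-th return time $T_{m_0}$ of a point in $\Delta'$ to $Y$. Splitting $\{\psi'_n\ge m_0\}\cup\{\psi'_n<m_0\}$ yields
\[
\int_{\Delta'}\gamma^{\psi'_n}\,d\mu_{\Delta'}\le \gamma^{m_0}+\mu_{\Delta'}(\psi'_n<m_0)\le \gamma^{m_0}+e^{-an}\int_{\Delta'} e^{aT_{m_0}}\,d\mu_{\Delta'}.
\]

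The exponential moment reduces to a transfer-operator quantity. Parametrising $p=(y,\ell)\in\Delta'$ gives $T_{m_0}(p)=(\varphi'(y)-\ell)+\sum_{i=1}^{m_0-1}\varphi'(F^iy)$, and summing over $\ell$ produces a factor $e^a/[\bar\varphi'(e^a-1)]$ times the $Y$-integral $\int_Y\exp\bigl\{a\sum_{i=0}^{m_0-1}\varphi'(F^iy)\bigr\}\,d\mu$. The key observation is that the weighted Koopman-like operator $Mg(y)=e^{a\varphi'(y)}g(Fy)$ has $L^2(\mu)$-adjoint $R'(e^a)$, since the transfer-operator duality gives $\int(Mg)h\,d\mu=\int g\cdot R(e^{a\varphi'}h)\,d\mu=\int g\cdot R'(e^a)h\,d\mu$. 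Iterating produces the clean identity $\int_Y M^{m_0}1\,d\mu=\int_Y(R'(e^a))^{m_0}1_Y\,d\mu$.

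At this stage the spectral machinery of Section~\ref{sec-T} takes over: by Propositions~\ref{prop-first} and~\ref{prop-tilde} one has $(R'(e^a))^{m_0}=\lambda'(e^a)^{m_0}P'(e^a)+(R'(e^a)Q'(e^a))^{m_0}$ with $|\lambda'(e^a)|\le e^{Ca\bar\varphi'}$, $\|P'(e^a)\|\ll 1+a^rS_r(k,a)$, and the complementary part contracting at a rate $\tau<1$ uniform in $k$. Bounding $(R'(e^a))^{m_0}1_Y$ in $\mathcal{B}(Y)$ and descending to the $L^1$ integral via $|\cdot|_1\le\|\cdot\|$ produces one factor of $1+a^rS_r(k,a)$; the second enters through the prefactor $e^a/[\bar\varphi'(e^a-1)]$ combined with the uniform bound on $\|R'_1\|$ from Proposition~\ref{prop-RR}(b). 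Choosing $m_0\asymp n/\bar\varphi'$ small enough to beat $|\lambda'(e^a)|^{m_0}e^{-an}$ against $e^{-na}$, yet large enough that $\gamma^{m_0}\ll e^{-na}$ (possible because $a(k)\to0$ forces $-\log\gamma\gg a$ for $k$ large), balances the two terms.

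The principal obstacle is tracking the two factors of $(1+a^rS_r(k,a))$ cleanly and maintaining uniformity in $k$ of the spectral gap $\tau$ and of the disk $B_\delta(1)$ on which $\lambda'(e^a)$ is defined as $k\to\infty$ under the running hypothesis $a^rS_r(k,a)\to0$; the restriction $n\ge k$ in the statement is used precisely to guarantee that $m_0\ge1$ in the regime where the exponential Chebyshev argument is meaningful.
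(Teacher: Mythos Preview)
Your reduction to bounding $\int_{\Delta'}\gamma^{\psi'_n}\,d\mu_{\Delta'}$ via Proposition~\ref{prop-tildev}(b) is correct and matches the paper. After that, however, your Chebyshev-splitting route diverges from the paper's argument and contains a genuine gap.

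\textbf{The gap.} With $m_0$ chosen large enough that $\gamma^{m_0}\le e^{-na}$ (forcing $m_0\ge na/|\log\gamma|$), the second term $e^{-an}\int e^{aT_{m_0}}\,d\mu_{\Delta'}$ involves $\lambda'(e^a)^{m_0}$. Since $e^a>1$ lies to the right of $1$, the paper's expansion $\lambda'(z)=1+(z-1)(\bar\varphi'+\tilde\lambda'(z))$ gives $\lambda'(e^a)\approx 1+a\bar\varphi'>1$, so $\lambda'(e^a)^{m_0}\approx e^{m_0 a\bar\varphi'}$. With $m_0\ge na/|\log\gamma|$ this is at least $e^{na^2\bar\varphi'/|\log\gamma|}$, and multiplied by $e^{-an}$ you obtain a rate strictly slower than $e^{-na}$. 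The two constraints ($m_0$ large enough for term~1, $m_0$ small enough for term~2) are incompatible once $n$ is large with $k$ fixed, because the right-hand side $(1+a^rS_r(k,a))^2$ is independent of $n$. Running Chebyshev with a parameter $s>a$ would fix the exponent but requires control of $R'(e^s)$ outside $\D_a$, which the hypothesis $\lim aS_0(k,a)=0$ does not provide. Your accounting of the prefactor $e^a/[\bar\varphi'(e^a-1)]\sim 1/(a\bar\varphi')$ is also loose; it does not obviously produce a factor $(1+a^rS_r(k,a))$.

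\textbf{What the paper does instead.} The paper rewrites $\int_{\Delta'}\gamma^{\psi'_n}\,d\mu'=\int_{\bar\Delta'}L_\gamma'^n 1\,d\mu'$ where $L'_\gamma v=L'(\gamma^{1_Y}v)$ is a twisted transfer operator, and applies the renewal decomposition $L'_\gamma(z)=A'_\gamma(z)T'_\gamma(z)D'_\gamma(z)+E'_\gamma(z)$ of Section~\ref{sec-L}. The key observation is that the twisted renewal operator satisfies $R'_\gamma(z)=\gamma R'(z)$, so $(I-R'_\gamma(z))^{-1}$ has \emph{no pole at $z=1$} (spectral radius $\le\gamma<1$) and is uniformly bounded on all of $\D_a$. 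Consequently $L'_\gamma(z)$ is analytic on $\D_a$ with $\sup_{\D_a}\|L'_\gamma(z)\|\ll\|A'\|\,\|D'\|\ll(1+a^rS_r(k,a))^2$ by Corollary~\ref{cor-AD}(b,c), and a Cauchy estimate on $\partial\D_a$ immediately gives the $n$-th coefficient bounded by $(1+a^rS_r(k,a))^2e^{-na}$. The two factors of $(1+a^rS_r(k,a))$ thus arise cleanly from $A'$ and $D'$, not from the locations you indicate.
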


\begin{proof}
By Proposition~\ref{prop-tildev}(b), $|v\circ f'^n(p)-\tilde v_n(p)|\ll \gamma^{\psi'_n(p)}\|v_0\|_{C^\eta}$.
Note that $\psi'_n=\sum_{j=0}^{n-1}\psi\circ f'^j$ where $\psi=1_Y$.    
We have
\[
|v\circ f'^n-\tilde v_n|_1/\|v_0\|_{C^\eta} \ll \int_{\Delta'} \gamma^{\psi'_n}\,d\mu'=
\int_{\bar\Delta'} \gamma^{\psi'_n}\,d\mu'=
\int_{\bar\Delta'} L'^n\gamma^{\psi'_n}\,d\mu'=
\int_{\bar\Delta'} L'^n_\gamma 1\,d\mu.
\]
where $L'_\gamma$ is the twisted transfer operator $L'_\gamma v=L'(\gamma^\psi v)$.

We estimate $L'^n_\gamma$ using truncated renewal operators.  Define
\begin{alignat*}{2}
 T'_{n,\gamma}& =1_Y L'^n_\gamma 1_Y,   
& \qquad T'_\gamma(z)& =\sum_{n=0}^\infty T'_{n,\gamma}z^n,
\\
 R'_{n,\gamma}& =1_Y L'^n_\gamma 1_{\{\varphi'=n\}},
& \qquad R'_\gamma(z)& =\sum_{n=1}^\infty R'_{n,\gamma}z^n .
\end{alignat*}
Then the renewal equation takes the form
$T'_\gamma(z)=(I-R'_\gamma(z))^{-1}$, for $z\in\D$.
Throughout, $\gamma\in(0,1)$ is fixed.

Next, we observe that
\[
R'_{n,\gamma}v= L'^n_\gamma( 1_{\{\varphi'=n\}}v)=L'^n(\gamma^{\psi'_n} 1_{\{\varphi'=n\}}v)
=\gamma R'_nv.
\]
In particular, $R'_\gamma(z)=\gamma R'(z)$ for $z\in\C$.
Similarly, we can define $R_\gamma(z)$ and deduce that 
$R_\gamma(z)=\gamma R(z)$, $z\in\bar\D$.
Hence, the spectral radius of $R_\gamma(z)$ is at
most $\gamma$ for all $z\in\bar\D$.   It follows that
$\sup_{z\in\bar\D}\|(I-R_\gamma(z))^{-1}\|<\infty$ for $z\in\bar\D$.
We proceed as in the proof 
of Proposition~\ref{prop-first} to deduce that for $k\ge k_0$, first
$\sup_{z\in\bar\D}\|(I-R'_\gamma(z))^{-1}\|<\infty$, and then that
\[
\sup_{z\in\D_a}\|T'_\gamma(z)\|=
\sup_{z\in\D_a}\|(I-R'_\gamma(z))^{-1}\|\ll 1.
\]

The relation $L'(z)=A'(z)T'(z)D'(z)+E'(z)$ from Section~\ref{sec-L}
 holds in the presence of $\gamma$ (with the obvious definitions) and it is immediate that
\[
A'_\gamma(z)=\gamma A'(z), \quad D'_\gamma(z)=D'(z), \quad E'_\gamma(z)=E'(z).
\]
In particular $E'_\gamma$ is a polynomial of degree at most $k-1$.
By Corollary~\ref{cor-AD}(b,c),
$\sup_{z\in\D_a}\|A'(z)\|\ll 1+a^rS_r(k,a)$, and
$\sup_{z\in\D_a}\|D'(z)\|\ll 1+a^rS_r(k,a)$.
Hence 
$\sup_{z\in\D_a}\|L'_\gamma(z)\|\ll (1 + a^rS_r(k,a))^2$
and the result follows.
\end{proof}

\begin{rmk}   The spectral radius property for $R_\gamma(z)$ holds in
$L^1(Y)$, so it is possible to prove Lemma~\ref{lem-tilde} without passing
to the Lipschitz norm.  However, this does not seem to lead to improvements 
in our final results.
\end{rmk}

\begin{pfof}{Theorem~\ref{thm-NUH}}
Suppose without loss that $v$ is mean zero.
Let $\ell\ge1$, and write
\begin{align*}
\rho'(n)& =\int_{\Delta'}v\,w\circ f'^n\,d\mu'
=\int_{\Delta'}v\circ f'^\ell\,w\circ f'^{\ell+n}\,d\mu'
 = I_1+I_2+I_3,
\end{align*}
where
\begin{align*}
I_1 & =\int_{\Delta'}(v\circ f'^\ell-\tilde v_\ell)\,w\circ f'^{\ell+n}\,d\mu'\\
I_2 &=\int_{\Delta'}\tilde v_\ell\,(w\circ f'^{n/2}-\tilde w_{n/2})\circ f'^{\ell+n/2}\,d\mu' \\
I_3 & =\int_{\Delta'}\tilde v_\ell\,\tilde w_{n/2}\circ f'^{\ell+n/2}\,d\mu'.
\end{align*}

By Proposition~\ref{prop-tildev}(b), $|I_1|\le|v\circ f'^\ell-\tilde v_\ell|_\infty |w|_\infty
\le C|\gamma^{\psi_n'}\|v_0\|_{C^\eta}|w_0|_\infty
\le C\gamma^{\ell/k}\|v_0\|_{C^\eta}|w_0|_\infty$.
By Proposition~\ref{prop-tildev}(a) and Lemma~\ref{lem-tilde}, $|I_2|\le|\tilde v_\ell|_\infty |w\circ f'^{n/2}-\tilde w_{n/2}|_1
\ll |v_0|_\infty \|w_0\|e^{-\frac12 na(k)}$.
Assume for the moment that
$\tilde v_\ell$ is mean zero.  By Theorem~\ref{thm-main} and
Proposition~\ref{prop-tildev}(c),
\begin{align*}
|I_3|
& =\Bigl|\int_{\bar\Delta'}\bar v_\ell\,\bar w_{n/2}\circ \bar f'^{\ell+n/2}\,d\mu'\Bigr|
=\Bigl|\int_{\bar\Delta'}L'^{n/2}L'^{\ell}\bar v_\ell\,\bar w_{n/2}\,d\mu'\Bigr| \\
& \le |L'^{n/2}L'^\ell \bar v_\ell|_1 |\bar w_{n/2}|_\infty
\ll S_q(k,a)e^{-\frac12 na}\|L'^\ell \bar v_\ell\|_\theta |w_0|_\infty
 \ll S_q(k,a)e^{-\frac12 na}\|v_0\|_{C^\eta} |w_0|_\infty.
\end{align*}
In the general case where $\tilde v_\ell$ is not mean zero, we apply the 
above argument with $\tilde v_\ell$ replaced by $\tilde v_\ell-\int_{\Delta'}\tilde v_\ell\,d\mu'$, and there is an extra term bounded by 
$|\int_{\Delta'}\tilde v_\ell\,d\mu'| |w_0|_\infty$.  Since $v$ is mean zero, 
$|\int_{\Delta'}\tilde v_\ell\,d\mu'|=
|\int_{\Delta'}(\tilde v_\ell-v\circ f'^\ell)\,d\mu'|\le C\|v_0\|_{C^\eta}e^{-\ell/k}$ by
another application of Proposition~\ref{prop-tildev}(b).

Finally, $\ell$ is arbitrary, and letting $\ell\to\infty$ yields the result.
\end{pfof}

\paragraph{Acknowledgements}
The research of IM and DT was supported in part by EPSRC Grant EP/F031807/1.
We are very grateful to S\'ebastien Gou\"ezel, Stefano Luzzatto and
Sandro Vaienti for helpful discussions and encouragement.
Special thanks to
S\'ebastien Gou\"ezel for showing IM the additional ideas~\cite{ChazottesGouezel, GouezelPC} required for Theorem~\ref{thm-NUH}.

\end{document}